\newtheorem{theorem}{Theorem}[section]
\newtheorem{lemma}[theorem]{Lemma}
\newtheorem{corollary}[theorem]{Corollary}
\newtheorem{proposition}[theorem]{Proposition}
\newtheorem{remark}[theorem]{Remark}
\definecolor{MyDarkGreen}{rgb}{0,0.45,0}
\def\trait #1 #2 #3 {\vrule width #1pt height #2pt depth #3pt}
\def\fin{\hfill
        \trait .3 5 0
        \trait 5 .3 0
        \kern-5pt
        \trait 5 5 -4.7
        \trait 0.3 5 0
\medskip}
\newenvironment{proof}{\textit{Proof.}}{\fin}
\newcommand{\TERM}[1]{\big(\textbf{#1}\big)}
\newcommand{\INTP}{\footnotesize{I}}
\newcommand{\REAL}{\mathbbm{R}}
\newcommand{\restrict}[2]{{#1}{}_{|{#2}}}
\newcommand{\EOD}{\end{document}}
\newcommand{\fv}{\mathbf{f}}
\newcommand{\qv}{\mathbf{q}}
\newcommand{\uv}{\mathbf{u}}
\newcommand{\vv}{\mathbf{v}}
\newcommand{\wv}{\mathbf{w}}
\newcommand{\xv}{\mathbf{x}}
\newcommand{\yv}{\mathbf{y}}
\newcommand{\Vv}{\mathbf{V}}
\newcommand{\as}{a}
\newcommand{\bs}{b}
\newcommand{\ks}{k}
\newcommand{\ps}{p}
\newcommand{\qs}{q}
\renewcommand{\ss}{s}
\newcommand{\us}{u}
\newcommand{\vs}{v}
\newcommand{\xs}{x}
\newcommand{\ys}{y}
\newcommand{\Cs}{C}
\newcommand{\Qs}{Q}
\newcommand{\Ss}{S}
\newcommand{\Vs}{V}
\newcommand{\matA}{\mathsf{A}}
\newcommand{\matB}{\mathsf{B}}
\newcommand{\matQ}{\mathsf{Q}}
\newcommand{\calG}{\mathcal{G}}
\newcommand{\calM}{\mathcal{M}}
\newcommand{\HONE}  {H^1}
\newcommand{\HONEzr}{H^1_0}
\newcommand{\LTWO}  {L^2}
\newcommand{\LTWOzr}{L^2_0}
\newcommand{\HS}[1] {H^{#1}}
\newcommand{\CS}[1] {C^{#1}}
\newcommand{\PS}[1] {\mathbbm{P}_{#1}}
\newcommand{\Vshk}{\Vs^{h}_{k}}
\newcommand{\Vvhk}{\Vv^{h}_{k}}
\newcommand{\Vvhkt}{\TILDE{\Vv}^{h}_{k}}
\newcommand{\Qsh}  {\Qs^{\hh}}
\newcommand{\Qshk} {\Qs^{\hh}_{k}}
\newcommand{\Qshkk}{\Qs^{\hh}_{k-1}}
\newcommand{\Qshkkk}{\Qs^{\hh}_{\underline{k}}}
\newcommand{\Vvhpp}[1]{\Vv^{\FT,h}_{k}}
\newcommand{\SV}{\textit{SV}}
\newcommand{\FT}{\textit{F2}}
\newcommand{\VvhkSV}{\Vv^{\SV{},h}_{k}}
\renewcommand{\P} {\textrm{E}}            
\newcommand  {\E} {\textrm{e}}
\newcommand  {\V} {\textsf{v}}            
\newcommand  {\T} {\textsf{T}}            
\newcommand{\hh}{h}
\newcommand{\Th}{\Omega_{\hh}}
\newcommand{\xvP}{\xv_{\P}}        
\newcommand{\dims}{2}              
\newcommand{\hP}{\hh_{\P}}
\newcommand{\hE}{\hh_{\E}}
\newcommand{\mP}{\ABS{\P}}
\newcommand{\mE}{\ABS{\E}}
\newcommand{\Eset}{\mathcal{E}}    
\newcommand{\Vset}{\mathcal{V}}    
\newcommand{\NMB}{N}
\newcommand{\NPV}{\NMB^{\Vset}_{\P}}      
\newcommand{\NPE}{\NMB^{\Eset}_{\P}}      
\newcommand{\dV}{\,d\xv}
\newcommand{\dS}{\,ds}
\newcommand{\DIV}  {\text{div}\,}
\newcommand{\norE} {\mathbf{n}_{\E}}
\newcommand{\norPE}{\mathbf{n}_{\P,\E}}
\newcommand{\X}   {\mathbf{x}}
\newcommand{\xV}{\X_{\V}}
\newcommand{\TILDE}[1]{\widetilde{#1}}
\newcommand{\bil}[2]{\langle#1,#2\rangle}
\newcommand{\scal}  [2]{(#1,#2)}
\newcommand{\abs}   [1]{|#1|}
\newcommand{\ABS}   [1]{\left|#1\right|}
\newcommand{\snorm}  [2]{|#1|_{#2}}
\newcommand{\norm}  [2]{||#1||_{#2}}
\newcommand{\Norm}  [2]{\left|\!\left|#1\right|\!\right|_{#2}}
\newcommand{\HAT}[1]{\widehat{#1}}
\newcommand{\Pin}[1]{\Pi^{\nabla}_{#1}}
\newcommand{\Piz}[1]{\Pi^{0}_{#1}}
\newcommand{\PinP}[1]{\Pi^{\nabla,\P}_{#1}}
\newcommand{\PizP}[1]{\Pi^{0,\P}_{#1}}
\newcommand{\SPh}{\Ss_{\hh}}
\newcommand{\ash}{\as_{\hh}}
\newcommand{\bsh}{\bs_{\hh}}
\newcommand{\asPh}{\as^{\P}_{\hh}}
\newcommand{\bsPh}{\bs^{\P}_{\hh}}
\newcommand{\asP}{\as^{\P}}
\newcommand{\bsP}{\bs^{\P}}
\newcommand{\psh}{p_{\hh}}
\newcommand{\psI}{p_{\INTP}}
\newcommand{\qsh}{\qs_{\hh}}
\newcommand{\qsI} {\qs_{\INTP}}
\newcommand{\vsh} {\vs_{\hh}}
\newcommand{\uvh} {\uv_{\hh}}
\newcommand{\uvI} {\uv_{\INTP}}
\newcommand{\vvh} {\vv_{\hh}}
\newcommand{\vvI} {\vv_{\INTP}}
\newcommand{\vvht}{\TILDE{\vv}_{\hh}}
\newcommand{\wvh} {\wv_{\hh}}
\newcommand{\fvh} {\fv_{\hh}}
\newcommand{\kb}{\bar{k}}
\newcommand{\dvh}{{\bm\delta}_{\hh}}
\newcommand{\ssh}{\sigma_{\hh}}
\newcommand{\tbeta}{\widetilde{\beta}}
\newcommand{\zerov}{\mathbf{0}}
\begin{document}
%
\begin{frontmatter} 
  \title{A virtual element generalization on polygonal meshes of the
    Scott-Vogelius finite element method for the 2-D Stokes problem}

  \author[LANL] {G.~Manzini}
  \author[DICEA]{and A.~Mazzia}
  
  \address[LANL]{T-5 Applied Mathematics and Plasma Physics Group,
    Los Alamos National Laboratory,
    Los Alamos, NM 87545,
    USA}
  
  \address[DICEA]{Dipartimento di Ingegneria Civile, Edile e Ambientale - ICEA,
    Universit\`a di Padova,
    35131 Padova,
    Italy}
  
  \begin{abstract}
    The Virtual Element Method (VEM) is a Galerkin approximation
    method that extends the Finite Element Method (FEM) to polytopal
    meshes.
    In this paper, we present a conforming formulation that
    generalizes the Scott-Vogelius finite element method for the
    numerical approximation of the Stokes problem to polygonal meshes
    in the framework of the virtual element method.
    In particular, we consider a straightforward application of the
    virtual element approximation space for scalar elliptic problems
    to the vector case and approximate the pressure variable through
    discontinuous polynomials.
    We assess the effectiveness of the numerical approximation by
    investigating the convergence on a manufactured solution problem
    and a set of representative polygonal meshes.
    We numerically show that this formulation is convergent with
    optimal convergence rates except for the lowest-order case on
    triangular meshes, where the method coincides with the
    $\PS{1}-\PS{0}$ Scott-Vogelius scheme, and on square meshes, which
    are situations that are well-known to be unstable.
  \end{abstract}
  
  \begin{keyword}
    Virtual Element Method,
    Stokes Equations,
    Scott-Vogelius finite element method
  \end{keyword}
  
\end{frontmatter}

\renewcommand{\arraystretch}{1.}
\raggedbottom



\section{Introduction}
\label{sec:intro}

Many physical phenomena can be described using the system of
incompressible Stokes equations as, for example, sedimentation and
bio-suspension processes~\cite{Hofer:2018}, droplet
dynamics~\cite{Kitahata-Yoshinaga-Nagai-Sumino:2013}, micro-fluidic
devices~\cite{Smith-Barbati-Santana-Gleghon-Kirby:2012}, fibrous
filter design~\cite{Linden-Cheng-Wiegmann:2018} and Stokes flows in
porous media~\cite{Bang-Lukkassen:1999}.
The Finite Element Method (FEM) was proven to be very successful in
the numerical treatment of the Stokes equations in variational form,
see
~\cite{Cai-Tong-Vassilevski-Wang:2010,Crouzeix-Raviart:1973,Girault-Raviart:1986}.
Although the FEM is very effective, it is also limited to a very
specific kind of unstructured meshes.
In fact, the Partial Differential Equations (PDEs) are discretized by
suitable polynomial trial and test functions that are conveniently
built only on unstructured meshes of triangular and quadrilateral
elements in two-dimensions (2-D) and tetrahedral and hexahedral
elements in three dimensions (3-D).
In the last decades, a great effort has been spent to overcome this
limitation and to develop numerical methods for steady state and
time-dependent problems working on more general polygonal and
polyhedral meshes,
~\cite{Wachspress:2015,
  Kuznetsov-Repin:2003,%
  Talischi-Paulino-Pereira-Menezes:2010,%
  Sukumar-Tabarraei:2004,%
  Tabarraei-Sukumar:2007,%
  BeiraodaVeiga-Lipnikov-Manzini:2014,
  Vacca-BeiraodaVeiga:2015}.

The Mimetic Finite Difference (MFD)
method~\cite{Lipnikov-Manzini-Shashkov:2014,BeiraodaVeiga-Lipnikov-Manzini:2014}
was among the first effective approaches to be proposed in this
direction.
In fact, this numerical approach uses only the degrees of freedom
without referring to any specific set of shape functions, and its
design allows the numerical model to preserve several fundamental
properties of the PDEs, such as maximum/minimum principles, solution
symmetries and the conservation of mass, momentum and energy,
see~\cite{Campbell-Shashkov:2001,Hyman-Shashkov:2001}.
Originally proposed for the numerical approximation of diffusion
problems~\cite{Brezzi-Lipnikov-Shashkov-Simoncini:2007,Brezzi-Lipnikov-Shashkov:2006},
the MFD method was then extended to convection–diffusion
problems~\cite{Cangiani-Manzini-Russo:2009}, and Stokes
equations~\cite{BeiraodaVeiga-Gyrya-Lipnikov-Manzini:2009,BeiraodaVeiga-Lipnikov:2010,BeiraodaVeiga-Lipnikov-Manzini:2010}.

The variational reformulation of the MFD method led to the Virtual
Element Method
(VEM)~\cite{BeiraodaVeiga-Brezzi-Cangiani-Manzini-Marini-Russo:2013}.
The VEM is a Galerkin method such as the finite element method where
the approximation space in every mesh element is composed by the
solutions of a differential problem.
The basis functions of such Galerkin formulation exist as specific
solutions of the elemental problems and are uniquely identified by a
special choice of the degrees of freedom.
However, they are ``virtual'' as they are never built explicitly, and
the bilinear forms of the variational formulation are approximated by
using special polynomial projections that are computable from the
degrees of freedom.

The VEM is strictly related to the finite element formulations on
polygonal and polyhedral
meshes~\cite{Manzini-Russo-Sukumar:2014,Cangiani-Manzini-Russo-Sukumar:2015,DiPietro-Droniou-Manzini:2018}.
Strong connections also exist with other discretization methods that
work on such kind of meshes, as for example the discontinuous
skeletal gradient
discretizations~\cite{DiPietro-Droniou-Manzini:2018}, and the Boundary
Element Method-based FEM (BEM-based
FEM)~\cite{Cangiani-Gyrya-Manzini-Sutton:2017:GBC:chbook}.

The first virtual element method was formulated as a conforming FEM
for the Poisson
problem~\cite{BeiraodaVeiga-Brezzi-Cangiani-Manzini-Marini-Russo:2013},
and then extended to convection-reaction-diffusion problems with
variable
coefficients~\cite{Ahmad-Alsaedi-Brezzi-Marini-Russo:2013,BeiraodaVeiga-Brezzi-Marini-Russo:2016b}.
Similarly, the nonconforming formulation was proposed for the Poisson
equation~\cite{AyusodeDios-Lipnikov-Manzini:2016},
and later extended to general
elliptic
problems~\cite{Cangiani-Manzini-Sutton:2017,Berrone-Borio-Manzini:2018},
Stokes problem~\cite{Cangiani-Gyrya-Manzini:2016}, eigenvalue
problems~\cite{Gardini-Manzini-Vacca:2019:M2AN:journal}, and the
biharmonic
equation~\cite{Antonietti-Manzini-Verani:2018,Zhao-Chen-Zhang:2016}.
The mixed virtual element method was proposed
in~\cite{Brezzi-Falk-Marini:2014}
and~\cite{BeiraodaVeiga-Brezzi-Marini-Russo:2016c} as the extension to
the virtual element setting of the Brezzi-Douglas-Marini and
Raviart-Thomas mixed methods~\cite{Boffi-Brezzi-Fortin:2013}.
The connection with the de~Rham diagrams and the Nedelec elements and
possible applications to electromagnetics have been explored
in~\cite{BeiraodaVeiga-Brezzi-Marini-Russo:2016a}.
A (surely non-exhaustive) list of other significant applications of
the VEM includes the works of References~\cite{%
Cangiani-Georgoulis-Pryer-Sutton:2016,%
BeiraodaVeiga-Lovadina-Vacca:2017,%
BeiraodaVeiga-Lovadina-Vacca:2018,%
BeiraodaVeiga-Chernov-Mascotto-Russo:2016,%
BeiraodaVeiga-Brezzi-Marini-Russo:2016a,%
BeiraodaVeiga-Brezzi-Marini-Russo:2016b,%
BeiraodaVeiga-Brezzi-Marini-Russo:2016c,%
BeiraodaVeiga-Brezzi-Marini-Russo:2016d,%
Benedetto-Berrone-Pieraccini-Scialo:2014,%
Berrone-Borio-Scialo:2016,%
Berrone-Pieraccini-Scialo:2016,%
Berrone-Benedetto-Borio:2016chapter,%
Berrone-Borio:2017,%
Perugia-Pietra-Russo:2016,%
Wriggers-Rust-Reddy:2016,BeiraodaVeiga-Lovadina-Mora:2015,%
BeiraodaVeiga-Manzini:2015,Mora-Rivera-Rodriguez:2015,%
Natarajan-Bordas-Ooi:2015,Berrone-Pieraccini-Scialo-Vicini:2015,%
Paulino-Gain:2015,%
Antonietti-BeiraodaVeiga-Mora-Verani:2014,%
BeiraodaVeiga-Brezzi-Marini-Russo:2014,%
BeiraodaVeiga-Brezzi-Marini-Russo:2014b,BeiraodaVeiga-Manzini:2014,%
BeiraodaVeiga-Brezzi-Marini:2013,Brezzi-Marini:2013,%
Berrone-Borio-Manzini:2018,
Benvenuti-Chiozzi-Manzini-Sukumar:2019,%
Antonietti-Manzini-Verani:2020,%
Certik-Gradini-Manzini-Mascotto-Vacca:2019,%
BeiraodaVeiga-Mora-Vacca:2019,%
Certik-Gardini-Manzini-Vacca:2018,%
BeiraodaVeiga-Manzini-Mascotto:2019,%
BeiraodaVeiga-Dassi-Vacca:2020,%
BeiraodaVeiga-Lipnikov-Manzini:2014}.

In this work, we are interested in extending the Scott-Vogelius finite
element method for the discretization of the 2D Stokes equation to the
virtual element setting.
The method that we present considers a discrete representation of the
two components of the velocity field by using the conforming virtual
element space originally proposed
in~\cite{BeiraodaVeiga-Brezzi-Cangiani-Manzini-Marini-Russo:2013,BeiraodaVeiga-Brezzi-Marini:2013}
and its modified (``enhanced'') version proposed
in~\cite{Ahmad-Alsaedi-Brezzi-Marini-Russo:2013}.
The scalar unknown, e.g., the pressure, is approximated by
discontinuous polynomials on the mesh elements.
The resulting discretization at the lowest order case on triangular
meshes coincides with the $\PS{1}-\PS{0}$ Scott-Vogelius FEM.
In such a case, the scheme can be non-convergent as the ``inf-sup''
stability condition cannot be ensured.
However, in all other cases our VEM provides a different
discretization, which performs well in the experiments we carried out.
The zero divergence constraint is satisfied in a variational sense,
i.e., the projection of the divergence on the subset of polynomials
used in the scheme formulation is zero.
It is worth mentioning that other virtual element approaches were
recently proposed in the literature that approximate the Stokes
velocity in such a way that its divergence is a polynomial that is set
to zero in the scheme.
This strategy provides an approximation of the Stokes velocity that
satisfies the zero divergence constraint in a pointwise sense.
We refer the interested reader to the works of References~\cite{
  BeiraodaVeiga-Lovadina-Vacca:2017,
  BeiraodaVeiga-Lovadina-Vacca:2018,
  BeiraodaVeiga-Mora-Vacca:2019,
  BeiraodaVeiga-Dassi-Vacca:2020,
  Chernov-Marcati-Mascotto:2021}.
However, the polynomial projection of the velocity divergence in our
VEM is zero up to the machine precision.
If we consider such projection as our numerical approximation to the
velocity divergence, such approximation is identically zero in the
computational domain.

\medskip
The paper is organized as follows.
The Stokes equations in strong and variational form are introduced in
Section~\ref{sec:Stokes}.
The virtual element method and its connection with the Scott-Vogelius
finite element method are presented and discussed in
Section~\ref{sec:VEM}.
The convergence behavior of our VEM is established in
Section~\ref{sec4:convergence}, where we derive an error estimate for
both velocity and pressure in the energy norm, and investigated
numerically in Section~\ref{sec:numerical} through a manufactured
solution test case that is solved on a set of representative polygonal
meshes.
This set of meshes includes also the triangular meshes where the
low-order original Scott-Vogelius finite element method may show a
well-known unstable behavior, thus motivating us to study the
``inf-sup'' stability of the method numerically.
Final remarks and hints for future work are given in
Section~\ref{sec:conclusions}.

\section{The Stokes problem and the virtual element discretization}
\label{sec:Stokes}

\subsection{Notation and technicalities}
\label{subsec:notation}
Consider the integer number $k>0$ and let $\omega$ be a bounded, open,
connected subset of $\REAL^{2}$.
According to the notation and definitions given
in~\cite{Adams-Fournier:2003}, $\LTWO(\omega)$ is the linear space of
square integrable functions defined on $\omega$ and $\HS{k}(\omega)$,
is the linear subspace of functions in $\LTWO(\omega)$ whose weak
derivatives of order less than or equal to $k$ are also in
$\LTWO(\omega)$.
We denote the norm and seminorm in $\HS{k}(\omega)$ by
$\norm{\cdot}{k,\omega}$ and $\snorm{\cdot}{k,\omega}$, respectively.
Throughout the paper, we prefer denoting the scalar product between
scalar and vector-valued fields by the integral notation, even if
sometimes we use the notation ``$(\cdot,\cdot)$'' for the sake of
conciseness.

In the formulation of the Stokes problem on the computational domain
$\Omega$, we use the linear space
\begin{align}
  \LTWOzr(\Omega):=\Big\{\qs\in\LTWO(\Omega)\,:\,\int_{\Omega}\qs\dV=0\Big\},
\end{align}
which is clearly a subspace of $\LTWO(\Omega)$.
This subspace is isomorphic to the quotient space
$\LTWO(\Omega)\backslash{\REAL}$, where two square integrable
functions are equivalent and identified as members of the same
equivalence class (which we still call ``function'') if their
difference is constant.

\subsection{Strong and weak form of the Stokes problem}

We are interested in the numerical discretization of the system of
incompressible Stokes equations
\begin{align}
  - \Delta\uv +\nabla\ps &= \fv \phantom{0}    \quad\textrm{in~}\Omega,\label{eq:stokes:A}\\[0.2em]
  \DIV\uv                &= 0   \phantom{\fv}  \quad\textrm{in~}\Omega,\label{eq:stokes:B}
\end{align}
for the velocity vector $\uv$ and the pressure $\ps$ defined on the
computational domain $\Omega$, which we assume to be an open, bounded,
polygonal subset of $\REAL^2$, whose boundary is denoted by $\Gamma$.
For the mathematical well-posedness of this problem, we consider the
homogeneous Dirichlet boundary condition
\begin{align}
  \uv = 0   \phantom{\fv}  \quad\textrm{on~}\Gamma.\label{eq:stokes:C}
\end{align}
Assumption~\eqref{eq:stokes:C} makes the exposition simpler and
immediate and is not restrictive on the design of the scheme as more general
boundary conditions can be included with some additional technicalities.
In particular, including nonhomogeneous Dirichlet boundary conditions
is straightforward and this case will be considered in the numerical
experiments.

\medskip
\noindent
To write the variational formulation of problem
\eqref{eq:stokes:A}-\eqref{eq:stokes:C}, we introduce the bilinear
forms
$$\as(\cdot,\cdot):\big[\HONE(\Omega)\big]^2\times\big[\HONE(\Omega)\big]^2\to\REAL$$
and
$$\bs(\cdot,\cdot):\big[\HONE(\Omega)\big]^2\times\LTWO(\Omega)\to\REAL,$$
which are defined as:
\begin{align}
  \as(\vv,\wv)&:=\int_{\Omega}\nabla\vv:\nabla\wv\dV
  \phantom{\int_{\Omega}\DIV\vv\,\qs\dV}\hspace{-1.25cm}
  \forall\vv,\wv\in\big[\HONE(\Omega)\big]^2,
  \label{eq:as:def}\\[0.5em]
  \bs(\vv,\qs)&:=-\int_{\Omega}\qs\DIV\vv\dV
  \phantom{\int_{\Omega}\nabla\vv:\nabla\wv\dV}\hspace{-1.25cm}
  \forall\vv\in\big[\HONE(\Omega)\big]^2,\,\qs\in\LTWO(\Omega).
  \label{eq:bs:def}
\end{align}
The symbol ``:'' in \eqref{eq:as:def} is the standard ``dot'' product
between two-dimensional tensors.
Now, the variational formulation is given by:
\emph{Find
  $(\uv,\ps)\in\big[\HONEzr(\Omega)\big]^2\times\LTWOzr(\Omega)$} such
that
\begin{align}
  \as(\uv,\vv) + \bs(\vv,\ps) &= (\fv,\vv) \phantom{0}        \qquad\forall\vv\in\big[\HONEzr(\Omega)\big]^2,\label{eq:stokes:var:A}\\[0.5em]
  \bs(\uv,\qs)                &= 0         \phantom{(\fv,\vv)}\qquad\forall\qs\in\LTWOzr(\Omega).            \label{eq:stokes:var:B}
\end{align}
The existence and uniqueness of the solution pair $(\uv,\ps)$,
cf.~\cite{Boffi-Brezzi-Fortin:2013,Girault-Raviart:1986,Girault-Raviart:1979},
follow on noting that the bilinear form $\as(\cdot,\cdot)$ is
continuous and coercive, and the bilinear form $\bs(\cdot,\cdot)$ is
continuous and satisfies the inf-sup condition:
\begin{align}
  \label{eq:inf-sup:continuous:setting}
  \inf_{\qs\in\LTWOzr(\Omega)\backslash{\REAL}}\sup_{\vv\in[\HONEzr(\Omega)\backslash{\REAL}]^2}\frac{ \bs(\vv,\qs) }{ \norm{\vv}{1,\Omega}\,\norm{\qs}{0,\Omega} }\geq\HAT{\beta},
\end{align}
for some real, strictly positive constant $\HAT{\beta}$.

We consider the two finite-dimensional approximation spaces $\Vvhk$
and $\Qshkkk$ for the vector and the scalar unknowns, where $k$ and
$\underline{k}$ are two integer numbers such that $\underline{k}\le
k-1$.
These spaces are labeled by $\hh$ to indicate that they are built on a
given mesh $\Th$. 
We assume that $\Vvhk$ is a conforming subspace of
$\big[\HONEzr(\Omega)\big]^2$ and $\Qshkkk$ a discontinuous subspace
of $\LTWOzr(\Omega)$.
We search for a vector field $\uvh\in\Vvhk$ and a scalar field
$\psh\in\Qshkkk$ that approximate $\uv$ and $\ps$, respectively.
These fields are the solution of the following variational
problem:
\emph{Find
  $(\uvh,\psh)\in\Vvhk\times\Qshkkk$
  such that
  }
\begin{align}
  \ash(\uvh,\vvh) + \bsh(\vvh,\psh) &= \bil{\fvh}{\vvh} \phantom{0}               \qquad\forall\vvh\in\Vvhk,  \label{eq:stokes:vem:A}\\[0.5em]
  \bsh(\uvh,\qsh)                   &= 0                \phantom{\bil{\fvh}{\vvh}}\qquad\forall\qsh\in\Qshkkk.\label{eq:stokes:vem:B}
\end{align}
In equations~\eqref{eq:stokes:vem:A} and~\eqref{eq:stokes:vem:B} we
use the virtual element approximation of the bilinear forms
$\as(\cdot,\cdot)$ and $\bs(\cdot,\cdot)$, which are denoted by
$\ash(\cdot,\cdot):\Vvhk\times\Vvhk\to\REAL$ and
$\bsh(\cdot,\cdot):\Vvhk\times\Qshkkk\to\REAL$, respectively.
Similarly, in the right-hand side of equation~\eqref{eq:stokes:vem:A}
we use the virtual element approximation of the right-hand side
of~\eqref{eq:stokes:var:A}, here denoted by $\bil{\fvh}{\cdot}$, where
$\fvh$ is assumed to be an element of the dual space of $\Vvhk$.

\section{The virtual element generalization of the Scott-Vogelius finite element method}
\label{sec:VEM}

\subsection{Mesh definition and regularity assumptions}
\label{subsec:mesh:regularity:assumptions}
Let $\mathcal{T}=\{\Th\}_{\hh}$ be a mesh family for $\Omega$, where
every mesh $\Th$ is a finite set of (closed) polygonal elements $\P$
such that $\overline{\Omega}=\cup_{\P\in\Th}\P$.
Each mesh is labeled by the subindex $\hh$, which is, as usual, the
maximum of the diameters of the mesh elements, i.e.,
$\hP=\sup_{\xv,\yv\in\P}\abs{\xv-\yv}$.
We assume that all the elements of a given mesh are nonoverlapping in
the sense that the intersection of the closure in $\REAL^2$ of any
pair of them can only be a mesh vertex or a mesh edge.
Therefore, the area of such intersection is zero.
Every polygonal element $\P$ has $\NPV$ vertices with coordinates
$\xV=(\xs_{\V},\ys_{\V})$.
These vertices are connected by $\NPE$ nonintersecting straight edges
$\E$, which form the boundary $\partial\P$.
The measure of $\P$ is denoted by $\mP$, its centroid (e.g., the
barycenter) by $\xvP:=(\xs_{\P},\ys_{\P})$, the unit outward vector
orthogonal to the elemental edge $\E$ by $\norPE$.
We also introduce the unit vector $\norE$, which is orthogonal to the
edge $\E$ and whose orientation is independent of the elements $\P$,
but it is fixed \emph{once and for all} in the mesh.
Note that for any given edge $\E$, the vector $\norPE$ may differ from
$\norE$ only by the multiplicative factor $-1$.

The mesh sequence used in the formulation of the method is required to
satisfy the two following conditions: there exists a positive constant
$\varrho$ such that
\begin{itemize}
\item 
  \textbf{(M1)} all polygonal elements $\P$ are star-shaped with
  respect to a disk with a radius, $r$, such that $r\ge\varrho\hP$;
\item
  \textbf{(M2)} all polygonal edges $\E\in\partial\P$ of all polygonal
  elements $\P$ satisfy $\hE\geq\varrho\hP$, where $\hE$ is the edge
  length.
\end{itemize}
We assume that $\varrho$ is independent of $\hh$, so that these
regularity assumptions are uniformly satisfied by all the meshes $\Th$
of the mesh family $\mathcal{T}=\{\Th\}_{\hh}$ used in the VEM
formulation.
Property \textbf{(M1)} implies that all the mesh elements are
\emph{simply connected} subsets of $\REAL^{2}$.
Property \textbf{(M2)} implies that the number of edges in the
elemental boundaries is uniformly bounded over the whole mesh family
$\mathcal{T}$.
We remark that under these conditions the theory of polynomial
approximation of functions in Sobolev spaces~\cite{Brenner-Scott:1994}
provides interpolation and projection error estimates that can be used
in the analysis of the method, see, e.g.,
Section~\ref{subsec:preliminary:results}.

\subsection{Scalar and vector approximation spaces}

If $\P$ is a generic mesh element and $\ell$ a nonnegative integer
number, we let $\PS{\ell}(\P)$ denote the linear space of polynomials
of degree up to $\ell$ defined on $\P$, with the useful convention
that $\PS{-1}(\P)=\{0\}$, and
$\big[\PS{\ell}(\P)\big]^2$ denotes the space of two-dimensional
vector polynomials of degree up to $\ell$ on $\P$.
We use the notation $\PS{\ell}(\Th)$ for the elementwise polynomials
of degree $\ell$ defined on the mesh $\Th$,
so that $\qs\in\PS{\ell}(\Th)$ is such that
$\restrict{\qs}{\P}\in\PS{\ell}(\P)$ for all $\P\in\Th$.
Similarly, if $\E$ is a generic mesh edge, we let $\PS{\ell}(\E)$
denote the linear space of polynomials of degree up to $\ell$ defined
on $\E$.

For the approximation of the scalar unknown $\ps$, we consider the
functional space of
discontinuous polynomials of degree $\underline{k}$ having zero
average on the mesh $\Th$, which we formally define as
$\Qshkkk:=\PS{\underline{k}}(\Th)\cap\LTWOzr(\Omega)$.

Instead, for the approximation of the vector unknown $\uv$ we consider
the functional space of vector-valued functions
\begin{align}
  \Vvhk:=\Big\{\vvh\in\big[\HONEzr(\Omega)\big]^2\,:\,\restrict{\vvh}{\P}\in\Vvhk(\P)\quad\forall\P\in\Th\Big\},
  \qquad\ks\geq1,
  \label{eq:VEM:global:space}
\end{align}
which is a conforming finite-dimensional subspace of
$\big[\HONEzr(\Omega)\big]^2$.
This functional space is obtained by ``gluing together'' the local
virtual element spaces $\Vvhk(\P)$ defined on the mesh elements
$\P\in\Th$.
We take $\Vvhk(\P)=\big[\Vshk(\P)\big]^2$ where $\Vshk(\P)$ is either
the scalar virtual element space originally introduced
in~\cite{BeiraodaVeiga-Brezzi-Cangiani-Manzini-Marini-Russo:2013}
\begin{align}
  \Vshk(\P):=\Big\{\vsh\in\HONE(\P)\,:\,
  \restrict{\vsh}{\partial\P}\in\CS{0}(\partial\P),\,
  \restrict{\vsh}{\E}\in\PS{k}(\E)\,\forall\E\in\partial\P,\,
  \Delta\vsh\in\PS{k-2}(\P)
  \Big\},
  \label{eq:BF:regular-space:def}
\end{align}
or its ``modified'' version (also called ``enhanced'' in the virtual
element literature) given by~\cite{Ahmad-Alsaedi-Brezzi-Marini-Russo:2013}
\begin{align}
  \Vshk(\P):=\Big\{\vsh\in\HONE(\P)\,:\,
  &\restrict{\vsh}{\partial\P}\in\CS{0}(\partial\P),\,
  \restrict{\vsh}{\E}\in\PS{k}(\E)\,\forall\E\in\partial\P,\,
  \Delta\vsh\in\PS{k}(\P),\nonumber\\[0.25em]
  &\int_{\P}\big(\vsh-\PinP{k}\vsh\big)\qs\dV=0\quad\forall\qs\in\PS{k}(\P)\backslash{\PS{k-2}(\P)}
  \Big\},
  \label{eq:BF:enhanced-space:def}
\end{align}
where $\PS{k}(\P)\backslash{\PS{k-2}(\P)}$ is the space of polynomials
of degree exactly equal to $k$ or $k-1$ and $\PinP{k}$ is the elliptic
projection operator that will be defined in
Section~\ref{subsec:degrees-of-freddom}, cf.
equations~\eqref{eq:Pinabla:def:A}-\eqref{eq:Pinabla:def:B}.
To avoid the proliferation of symbols practically denoting the same
thing, we use the notation ``$\Vshk$'' for the spaces
in~\eqref{eq:BF:regular-space:def}
and~\eqref{eq:BF:enhanced-space:def} with a minor abuse of notation.
The definition of the virtual element vector space in
\eqref{eq:BF:enhanced-space:def} has been modified according to the
so-called \emph{enhancement strategy}, so that the $\LTWO$-orthogonal
projection onto $\big[\PS{k}(\P)\big]^2$ of a virtual element function
$\vvh\in\Vvhk(\P)$ is computable.

\medskip
\begin{remark}[Connection with the Scott-Vogelius finite element method]
  The approximation spaces defined above contain the Scott-Vogelius
  finite element method as a special case.
  In fact, if we consider a triangular mesh, we recover the
  Scott-Vogelius FEM by setting $\Vvhk=\VvhkSV$, where
  \begin{align}
    \VvhkSV
    &:=\Big\{\vvh\in\big[\HONE(\Omega)\big]^2\,:\,\restrict{\vvh}{\P}\in\big[\PS{k}(\P)\big]^2\quad\forall\P\in\Th\Big\}, \label{eq:SV:vector:space:def}\\[0.5em]
    \Qshkkk
    &:=\Big\{\qsh\in\LTWOzr(\Omega)\,:\,\restrict{\qsh}{\P}\in\PS{\underline{k}}(\P)\quad\forall\P\in\Th\Big\}=\PS{\underline{k}}(\Th)\cap\LTWOzr(\Omega),\label{eq:SV:scalar:space:def}
  \end{align}
  for $k\geq1$ and $\underline{k}\leq\ks-1$, and taking
  $\ash(\cdot,\cdot)=\as(\cdot,\cdot)$,
  $\bsh(\cdot,\cdot)=\bs(\cdot,\cdot)$ in
  \eqref{eq:stokes:vem:A}-\eqref{eq:stokes:vem:B}.
  It is well-known that the scheme with $k=1$ and $\underline{k}=0$
  can be unstable on triangular and square meshes.
  It is worth noting that our virtual element method coincides with
  such approximations only on triangular and square meshes and only
  for $k=1$ and $\underline{k}=0$.
  In all the other cases, including for example $k=2$ and
  $\underline{k}=0$, our approach provides a different and
  well-behaving discretization.
  The performance of the VEM for different values of the integer pair
  $(k,\underline{k})$ on triangular and square meshes is investigated
  in the numerical section.
  Our numerical evidence shows that the VEM is stable and convergent
  except for the possibly unstable cases mentioned above.
\end{remark}

\begin{figure}[!t]
  \centering
  \begin{tabular}{ccc}
    \includegraphics[width=0.28\textwidth]{./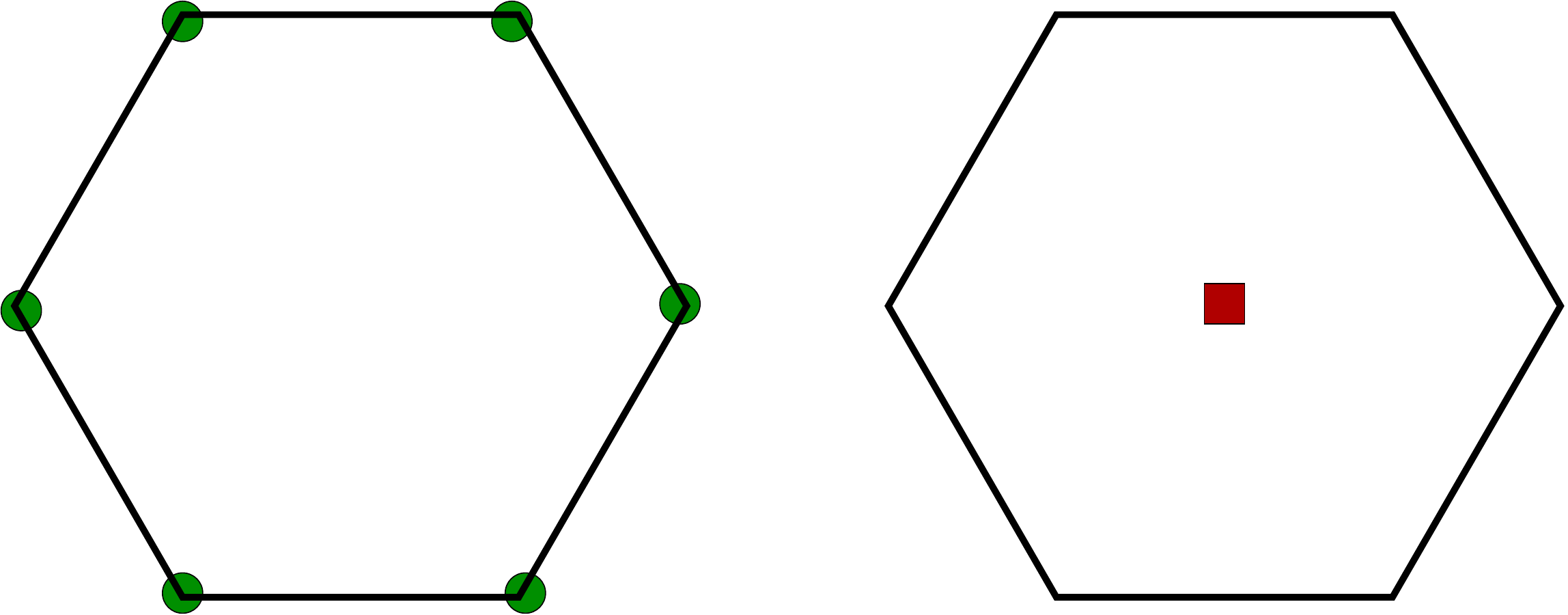} &\quad    
    \includegraphics[width=0.28\textwidth]{./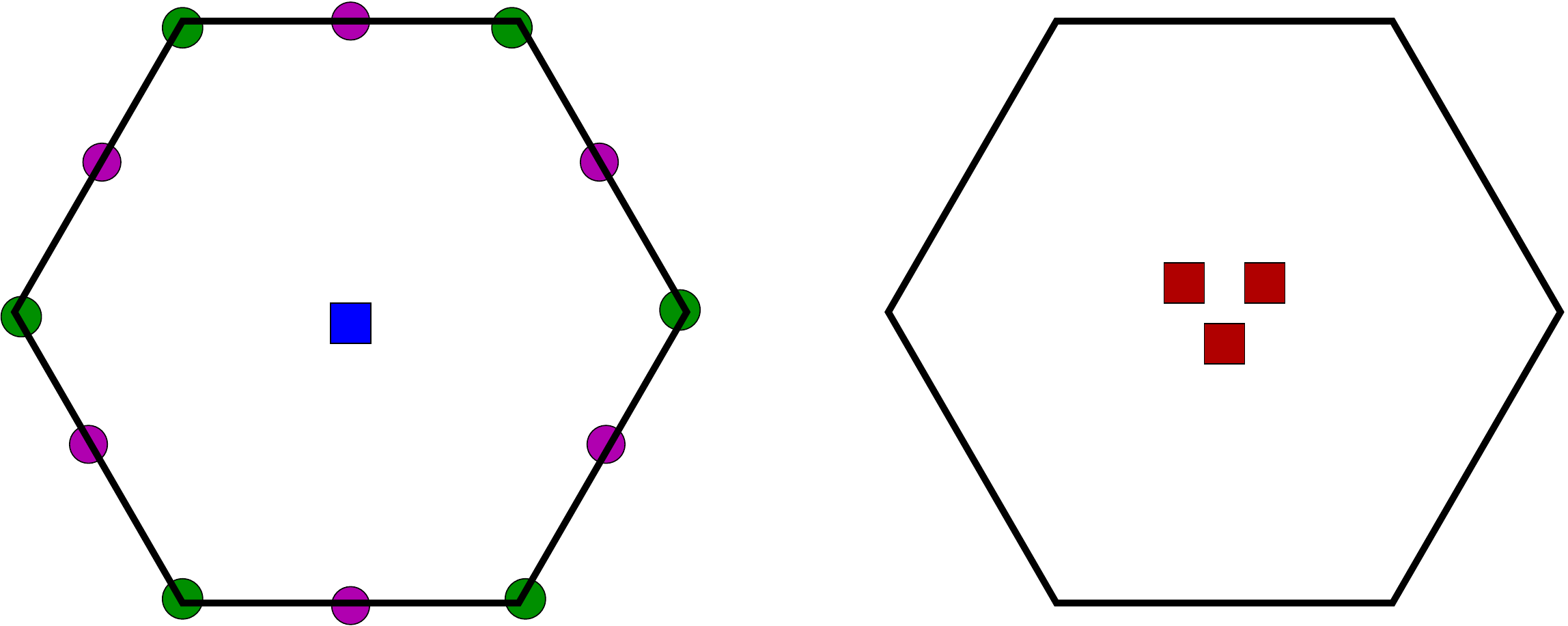} &\quad    
    \includegraphics[width=0.28\textwidth]{./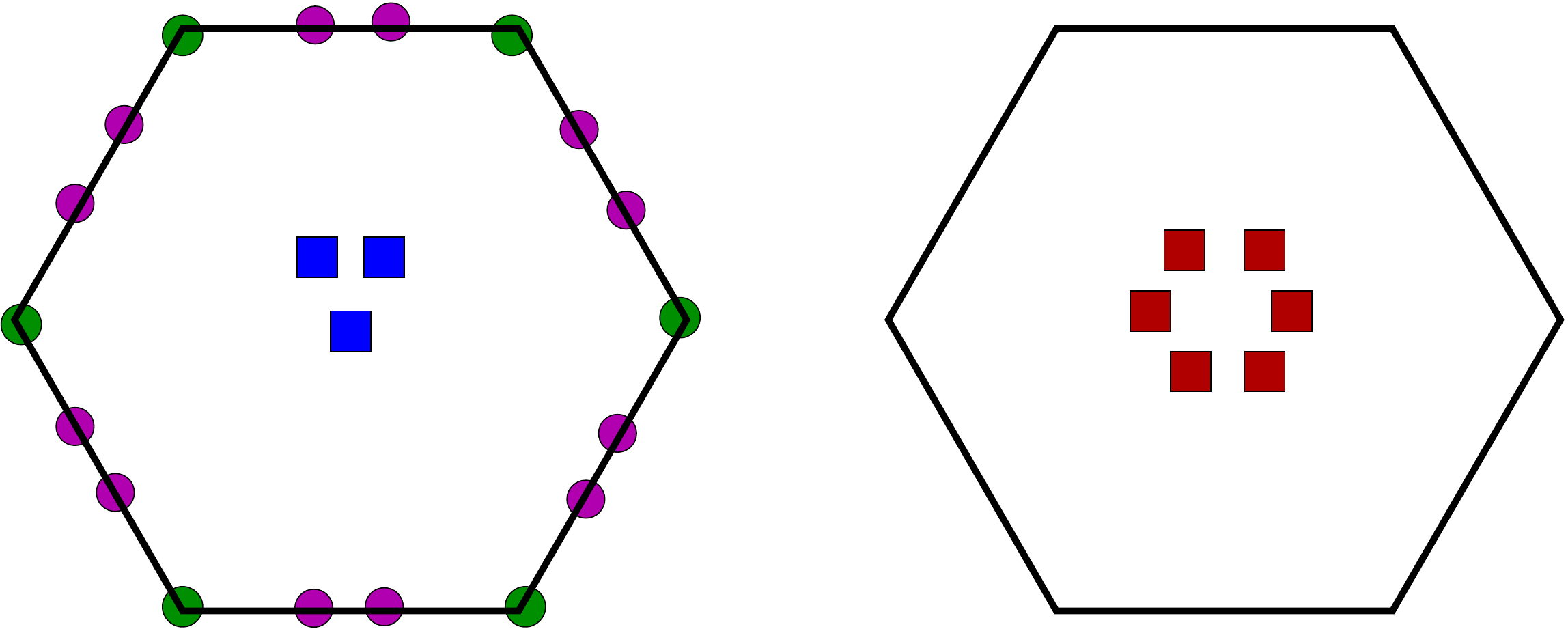} \\[0.5em] 
    \hspace{1mm}$\mathbf{(k,\underline{k})=(1,0)}$ &
    \hspace{4mm}$\mathbf{(k,\underline{k})=(2,1)}$ &
    \hspace{4mm}$\mathbf{(k,\underline{k})=(3,2)}$ 
  \end{tabular}
  \caption{Degrees of freedom of each component of the virtual element
    vector-valued fields of $\Vvhk(\P)$ (left) and the scalar
    polynomial fields of $\Qshkkk(\P)$ (right) of an hexagonal element
    for the accuracy degrees $k=1,2,3$ and $\underline{k}=k-1$.
    Vertex values and edge polynomial moments are marked by a circular
    bullet; cell polynomial moments are marked by a square bullet.}
  \medskip
  \label{fig:dofs:BF}
\end{figure}

\subsection{Degrees of freedom and polynomial projection operators}
\label{subsec:degrees-of-freddom}

The functions in the virtual element
space~\eqref{eq:BF:regular-space:def} and the enhanced space
\eqref{eq:BF:enhanced-space:def} are uniquely characterized by the
following set of values, the \emph{degrees of freedom}:

\medskip
\begin{itemize}
\item[\TERM{D1}] for $k\geq1$, the vertex values $\vsh(\xV)$,
  $\V\in\partial\P$;
\item[\TERM{D2}] for $k\geq2$, the polynomial edge moments
  \begin{align}
    \frac{1}{\mE}\int_{\E}\vsh(\ss)\qsh(\ss)\dS
    \qquad\forall\qsh\in\textrm{~basis~of~}\PS{k-2}(\E)
  \end{align}
  for every edge $\E\in\partial\P$;
\item[\TERM{D3}] for $k\geq2$, the polynomial cell moments
  \begin{align}
    \frac{1}{\mP}\int_{\P}\vsh(\xv)\qsh(\xv)\dV
    \qquad\forall\qsh\in\textrm{~basis~of~}\PS{k-2}(\P).
  \end{align} 
\end{itemize}

\medskip
The unisolvence of \TERM{D1}-\TERM{D3} is proved in
Reference~\cite{BeiraodaVeiga-Brezzi-Cangiani-Manzini-Marini-Russo:2013}
for the space defined in~\eqref{eq:BF:regular-space:def}, and in
Reference~\cite{Ahmad-Alsaedi-Brezzi-Marini-Russo:2013} for the
enhanced space defined in~\eqref{eq:BF:enhanced-space:def}.
In Figure~\ref{fig:dofs:BF}, we illustrate the degrees of freedom for
each velocity component and the scalar field for the polynomial degree
$k=1,2,3$ on an hexagonal element.
The degrees of freedom that uniquely characterize the virtual element
functions in the global space $\Vvhk$ are obtained by collecting the
elemental degrees of freedom.
Their unisolvence in the global space $\Vvhk$ is an immediate
consequence of the unisolvence of the degrees of freedom
\TERM{D1}-\TERM{D3} in the elemental spaces $\Vvhk(\P)$.

\begin{remark}
  In our implementation we used the scaled monomials and the
  orthogonal polynomials as two alternative bases in the definitions
  of the moments on $\E\in\partial\P$ and $\P$ of the degrees of
  freedom \TERM{D2} and \TERM{D3}.
  We remark that the choice of the basis of the polynomial space is
  arbitrary and does not change the theoretical convergence property
  of the method although it may impact on the conditioning of the
  discrete problem and practically affects the accuracy of the
  resulting approximation.
\end{remark}

\begin{remark}[Alternative choice of the degrees of freedom]
  Instead of \TERM{D2}, we can select the values at a set of $k-1$
  internal nodes on each edge, e.g., the nodes supporting the
  one-dimensional Gauss-Lobatto integration rule with $k-1$ internal
  nodes, which is exact on univariate polynomials of degree up to
  $2k+1$.
  This alternative choice is equivalent to \TERM{D2}.
  In the preliminary stages of our work, we carried out all numerical
  experiments with both choices of degrees of freedom and we did not
  find any significant difference in the results.
\end{remark}

Let $\vsh$ be a scalar virtual element function on the element $\P$
according to the definition given in~\eqref{eq:BF:regular-space:def}
or \eqref{eq:BF:enhanced-space:def}.
Then, the following polynomial projections are computable using the
degrees of freedom \TERM{D1}-\TERM{D3}:
\begin{itemize}
\item the elliptic projection $\PinP{k}\vsh\in\PS{k}(\P)$, which is
  the solution of the variational problem
  \begin{align}
    \int_{\P}\nabla\big(\vsh-\PinP{k}\vsh\big)\qs\dV &= 0 \qquad\forall\qs\in\PS{k}(\P),\label{eq:Pinabla:def:A}\\[0.5em]
    \int_{\partial\P}\big(\vsh-\PinP{k}\vsh\big)\dS    &= 0;\label{eq:Pinabla:def:B}
  \end{align}

\item the orthogonal projection $\PizP{\kb}\vsh\in\PS{\kb}(\P)$,
  which is the solution of the variational problem
  \begin{align*}
    \int_{\P}\big(\vsh-\PizP{\kb}\vsh\big)\qs\dV = 0
    \qquad\forall\qs\in\PS{\kb}(\P),
  \end{align*}
  where $\kb=k-2$ for $k\geq2$ if $\Vshk(\P)$ is defined by
  \eqref{eq:BF:regular-space:def}, and $\kb=k\geq1$ if $\Vshk(\P)$ is
  defined by \eqref{eq:BF:enhanced-space:def}.
\end{itemize}
Likewise, if $\vvh=(\vs_x,\vs_y)^T$ is a vector-valued field whose
components are in the virtual element spaces defined
by~\eqref{eq:BF:regular-space:def} or
\eqref{eq:BF:enhanced-space:def}, the following projectors are
computable:

\begin{itemize}
\item the orthogonal projection $\PizP{\kb}\vvh$ of a vector-valued
  field $\vvh=(\vs_x,\vs_y)^T$, which is the solution of the
  variational problem
  \begin{align*}
    \int_{\P}\big(\vvh-\PizP{\kb}\vvh\big)\cdot\qv\dV = 0
    \qquad\forall\qv\in\big[\PS{\kb}(\P)\big]^2,
  \end{align*}
  where $\kb=k-2$ for $k\geq2$ if each component of $\vvh$ belongs to
  the scalar virtual element space $\Vshk(\P)$ defined by
  \eqref{eq:BF:regular-space:def}, and $\kb=k\geq1$ if each component
  of $\vvh$ belongs to the enhanced virtual element space $\Vshk(\P)$
  defined by \eqref{eq:BF:enhanced-space:def}.
  The projection operator is computed component-wisely, i.e.,
  $\PizP{\kb}\vvh=(\PizP{\kb}\vs_x,\PizP{\kb}\vs_y)^T\in\big[\PS{\kb}(\P)\big]^2$,
  where $\PizP{\kb}\vs_x$ and $\PizP{\kb}\vs_y$ are the scalar
  orthogonal projections defined above;
  
\item the orthogonal projection
  $\PizP{k-1}\nabla\vvh\in\big[\PS{k-1}(\P)\big]^{2\times2}$ of
  $\nabla\vvh$, the gradient of the virtual element vector-valued
  field $\vvh$, onto the linear space of the $2\times2$-sized
  matrix-valued polynomials of degree $k-1$.
  These quantities are defined component-wisely as follows:
  \begin{align}
    \nabla\vvh &=
    \left(
    \begin{array}{cc}
      \frac{\partial\vs_x}{\partial\xs} & \quad\frac{\partial\vs_x}{\partial\ys}\\[1.0em]
      \frac{\partial\vs_y}{\partial\xs} & \quad\frac{\partial\vs_y}{\partial\ys}\\
    \end{array}
    \right)
    \intertext{and}\quad
    \PizP{k-1}\nabla\vvh &=
    \left(
    \begin{array}{cc}
      \PizP{k-1}\frac{\partial\vs_x}{\partial\xs} & \quad\PizP{k-1}\frac{\partial\vs_x}{\partial\ys}\\[1.0em]
      \PizP{k-1}\frac{\partial\vs_y}{\partial\xs} & \quad\PizP{k-1}\frac{\partial\vs_y}{\partial\ys}\\
    \end{array}
    \right),
  \end{align}
  and this latter one is the solution of the variational problem:
  \begin{align*}
    \int_{\P}\big(\nabla\vvh-\PizP{k-1}\nabla\vvh\big):\bm\kappa\dV = 0
    \qquad\forall{\bm\kappa}\in\big[\PS{k-1}(\P)\big]^{2\times2}.
  \end{align*}
\end{itemize}
The computability of the scalar polynomial projection is proved
in~\cite{BeiraodaVeiga-Brezzi-Cangiani-Manzini-Marini-Russo:2013,Ahmad-Alsaedi-Brezzi-Marini-Russo:2013},
while the computability of the vector polynomial projection can easily
be proved by using the same arguments of these papers
component-wisely.


\subsection{The virtual element bilinear forms $\ash(\cdot,\cdot)$}

First, we note that the linearity of the integral in \eqref{eq:as:def}
allows us to split the bilinear form $\as(\cdot,\cdot)$ as a summation
of the local bilinear forms defined on the mesh elements:
\begin{align}
  \as(\vv,\wv)&=\sum_{\P\in\Th}\asP(\vv,\wv)\quad\textrm{with}\quad
  \asP(\vv,\wv)=\int_{\P}\nabla\vv:\nabla\wv\dV.
  \label{eq:asP:def}
\end{align}
The virtual element bilinear form $\ash(\cdot,\cdot)$ can be defined
as the sum of the local bilinear forms
$\asPh(\cdot,\cdot):\Vvhk(\P)\times\Vvhk(\P)\to\REAL$ by using the
orthogonal projection operators:
\begin{align}
  &\ash(\vvh,\wvh) = \sum_{\P\in\Th}\asPh(\vvh,\wvh),
  \label{eq:ash:def}
  \intertext{where~}
  &\asPh(\vvh,\wvh)
  = \int_{\P}\PizP{k-1}\nabla\vvh:\PizP{k-1}\nabla\wvh\dV
  + \SPh\big( (1-\Pi^{\P}_{k})\vvh, (1-\Pi^{\P}_{k})\wvh \big),
  \label{eq:asPh:def}
\end{align}
where $\Pi^{\P}_{k}$ in the last term can be either the orthogonal or
the elliptic projection $\PizP{k}$ or $\PinP{k}$.
The definition in~\eqref{eq:asPh:def} includes the stabilization term
provided by the local bilinear form
$\SPh(\cdot,\cdot):\Vvhk(\P)\times\Vvhk(\P)\to\REAL$.
According to the virtual element setting, for $\SPh(\cdot,\cdot)$ we
can choose any symmetric, positive definite bilinear form such that
\begin{align*}
  \sigma_*\as(\vvh,\vvh)\leq\SPh(\vvh,\vvh)\leq\sigma^*\as(\vvh,\vvh)
  \qquad
  \forall\vvh\in\Vvhk(\P)\cap\textrm{ker}(\Pi^{\P}_{k}),
\end{align*}
where $\sigma_*$ and $\sigma^*$ are two real, positive constants
independent of $\hh$.
A detailed study of possible stabilization forms can be found
in~\cite{Mascotto:2018}.
This design is such that the following two fundamental properties are
true for the local bilinear form $\asPh(\cdot,\cdot)$:
\begin{itemize}
\item \textbf{Stability}: there exist two real, positive constants
  $\alpha_*$ and $\alpha^*$ such that
  \begin{align}
    \alpha_*\asP(\vvh,\vvh)\leq\asPh(\vvh,\vvh)\leq\alpha^*\asP(\vvh,\vvh)
    \qquad\forall\vvh\in\Vvhk(\P).
    \label{eq:ash:stability}
  \end{align}
  These constants are independent of $\hh$ but can depend on other
  parameters of the discretization, such as $\rho$, which is related
  to the regularity of the mesh, $\sigma_*$ and $\sigma^*$, and the
  order of the method $k$.

  \medskip
\item \textbf{Polynomial consistency}: the \emph{exactness property}
  holds
  \begin{align}
    \asPh(\vvh,\qv) = \asP(\vv,\qv),
    \label{eq:consistency}
  \end{align}
  for every vector field $\vvh\in\Vvhk(\P)$ and vector polynomial
  field $\qv\in\big[\PS{k}(\P)\big]^2$.
  \medskip
\end{itemize}
These properties have two important consequences.
First, adding all the local left inequalities
in~\eqref{eq:ash:stability} implies that the bilinear form
$\ash(\cdot,\cdot)$ is coercive on $\Vvhk\times\Vvhk$:
\begin{align}
  \ash(\vvh,\vvh) \geq \alpha_*\snorm{\vvh}{1,\Omega}^2.
  \label{eq:ash:coercivity}
\end{align}
Second, the local bilinear form $\asPh(\cdot,\cdot)$ is continuous on
$\Vvhk\times\Vvhk$.
Indeed, relation~\eqref{eq:ash:stability} and the symmetry of
$\asPh(\cdot,\cdot)$ imply that $\asPh(\cdot,\cdot)$ is a semi-inner
product on $\Vvhk(\P)$ (indeed, it is an inner product on the
quotient space $\big[\Vshk(\P)\backslash{\REAL}\big]^2$).
Using the Cauchy-Schwarz inequality we find that:
\begin{align}
  \asPh(\vvh,\wvh)
  &\leq \big[\asPh(\vvh,\vvh)\big]^{\frac{1}{2}}\,\big[\asPh(\wvh,\wvh)\big]^{\frac{1}{2}}
  \leq \alpha^*\,\big[\asP(\vvh,\vvh)\big]^{\frac{1}{2}}\,\big[\asP(\wvh,\wvh)\big]^{\frac{1}{2}}
  \nonumber\\[0.5em]
  &=    \alpha^*\,\snorm{\vvh}{1,\P}\,\snorm{\wvh}{1,\P}.
  \label{eq:ash:local:continuity}
\end{align}
Then, we sum all the local inequalities and use again the
Cauchy-Schwarz inequality and the right inequality
of~\eqref{eq:ash:stability} to find that
\begin{align}
  \ash(\vvh,\wvh)
  &=\sum_{\P\in\Th}\asPh(\vvh,\wvh)
  \leq \alpha^*\,\sum_{\P\in\Th}\,\snorm{\vvh}{1,\P}\,\snorm{\wvh}{1,\P}
  \nonumber\\[0.5em]
  &\leq \alpha^*\,\Bigg(\sum_{\P\in\Th}\,\snorm{\vvh}{1,\P}^2\Bigg)^{\frac12}\,\Bigg(\sum_{\P\in\Th}\,\snorm{\wvh}{1,\P}^2\Bigg)^{\frac12}
  =    \alpha^*\,\snorm{\vvh}{1,\Omega}\,\snorm{\wvh}{1,\Omega},
  \label{eq:ash:global:continuity}
\end{align}
which implies the global continuity of $\ash$.

\subsection{The virtual element bilinear forms $\bsh(\cdot,\cdot)$}

First, we note that the linearity of the integral in \eqref{eq:bs:def}
allows us to split the bilinear form $\bs(\cdot,\cdot)$ as a summation
of the local bilinear forms defined on the mesh elements:
\begin{align}
  \bs(\vv,\qs)&=\sum_{\P\in\Th}\bsP(\vv,\wv)\quad\textrm{with}\quad
  \bsP(\vv,\qs)=-\int_{\P}\qs\,\DIV\vv\dV.
  \label{eq:bsP:def}
\end{align}

We split $\bsh(\cdot,\cdot)$ as the sum of the local bilinear forms
$\bsPh(\cdot,\cdot):\Vvhk(\P)\times\PS{\underline{k}}(\P)\to\REAL$ to
define its virtual element approximation:
\begin{align}
  \bsh(\vvh,\qs) = \sum_{\P\in\Th}\bsPh(\vvh,\qs)
  \qquad\textrm{where}\qquad
  \bsPh(\vvh,\qs)
  = -\int_{\P}\qs\,\PizP{\underline{k}}\DIV\vvh\dV.
  \label{eq:bsh:def}
\end{align}
It holds that $\bsPh(\vvh,\qs)=\bsP(\vvh,\qs)$ for every vector-valued
field $\vvh\in\Vvhk(\P)$ and polynomial scalar function
$\qs\in\PS{\underline{k}}(\P)$, where we recall that
$\bsP(\cdot,\cdot)$ is defined in~\eqref{eq:bsP:def}.
This property is a straightforward consequence of the definition of
the orthogonal projection operator $\PizP{\underline{k}}$.
If we add this relation over all the mesh elements, we find that
\begin{align}
  \bsh(\vvh,\qs) = \bs(\vvh,\qs)
  \qquad\forall\vvh\in\Vvhk,\,\qs\in\PS{\underline{k}}(\Th)
  \label{eq:bsh-bs}
\end{align}
for all pairs $(k,\underline{k})$ with $\underline{k}\leq\ks-1$, which
can be used to simplify the implementation of the method.
  
\subsection{The virtual element approximation of the right-hand side}
Let $\kb$ be a nonnegative integer number.
The right hand-side of equation~\eqref{eq:stokes:vem:A} is written as
\begin{align}
  \bil{\fvh}{\vvh}
  = \sum_{\P\in\Th}\int_{\P}\PizP{\kb}\fv\cdot\vvh\dV 
  = \sum_{\P\in\Th}\int_{\P}\fv\cdot\PizP{\kb}\vvh\dV,
\end{align}
where the vector-valued source term $\fv$ is locally approximated by
its orthogonal projection $\PizP{\kb}\fv$ onto the polynomial space
$\PS{\kb}(\P)$ for all polygonal elements $\P$.
Note that we use the definition of the orthogonal projector
$\PizP{\kb}$ in the second equality above.

Two choices of $\kb$ are possible:
\begin{itemize}
\item[$\bullet$] $\kb=\max(0,k-2)$ if we consider the virtual element
  space in~\eqref{eq:BF:regular-space:def};
\item[$\bullet$] $\kb=k$ (or $k-1$): if we consider the enhanced
  virtual element space in~\eqref{eq:BF:enhanced-space:def}.
\end{itemize}
The first choice was proposed for $k\geq2$ in the original
paper~\cite{BeiraodaVeiga-Brezzi-Cangiani-Manzini-Marini-Russo:2013}.
It is worth mentioning that for $k=1$ the projector $\PizP{0}$ is
substituted by the arithmetic average of the elemental vertex values
for each velocity vector component.
This choice allows us to obtain the correct convergence rate for the
approximation of the velocity field in the energy norm.
The second choice was proposed in
Ref.~\cite{Ahmad-Alsaedi-Brezzi-Marini-Russo:2013} and allows us to
obtain the correct convergence rate for the approximation of the
velocity field in the $\LTWO$-norm.


We recall the error estimates pertaining these two possible
approximations of the right-hand side, which follow on noting that
$\big(1-\PizP{\kb}\big)\fvh$ is orthogonal to $\PizP{0}\vvh$ in the
$\LTWO$-inner product.
Assuming that $\fv\in\big[\HS{\kb+1}(\Omega)\big]^{\dims}$
with $\kb\geq0$ we find that~\cite{BeiraodaVeiga-Brezzi-Cangiani-Manzini-Marini-Russo:2013}:
\begin{align}
  \ABS{\bil{\fvh}{\vvh}-\scal{\fv}{\vvh}}
  &= \ABS{ \sum_{\P\in\Th}\int_{\P}\big(\PizP{\kb}\fv-\fv\big)\vvh\dV }
  \nonumber\\[0.5em]
  &\leq \sum_{\P\in\Th}\ABS{ \int_{\P}\big(\PizP{\kb}\fv-\fv\big)\big(\vvh-\PizP{0}\vvh\big)\dV }
  \nonumber\\[0.5em]
  &\leq \sum_{\P\in\Th} \norm{\PizP{\kb}\fv-\fv}{0,\P}\,\norm{\vvh-\PizP{0}\vvh}{0,\P}
  \nonumber\\[0.5em]
  &\leq \Cs\hh^{\kb+2}\norm{\fv}{\kb+1,\P}\,\snorm{\vvh}{1,\P},
  \label{eq:fv:bound:0}
\end{align}
for some constant $C>0$ independent of $\hh$.
For $\kb=0$ and assuming $\fv\in\big[\LTWO(\Omega)\big]^{\dims}$, we
find that
\begin{align}
  \ABS{\bil{\fvh}{\vvh}-\scal{\fv}{\vvh}}
  &= \ABS{ \sum_{\P\in\Th}\int_{\P}\big(\PizP{0}\fv-\fv\big)\vvh\dV }
  \nonumber\\[0.5em]
  &\leq \sum_{\P\in\Th}\ABS{ \int_{\P}\big(\PizP{0}\fv-\fv\big)\big(\vvh-\PizP{0}\vvh\big)\dV }
  \nonumber\\[0.5em]
  &\leq \sum_{\P\in\Th} \norm{\PizP{0}\fv-\fv}{0,\P}\,\norm{\vvh-\PizP{0}\vvh}{0,\P}
  \nonumber\\[0.5em]
  &\leq \Cs\hh\norm{\fv}{0,\P}\,\snorm{\vvh}{1,\P},
  \label{eq:fv:bound:1}
\end{align}
for some constant $C>0$ independent of $\hh$.

\section{Convergence theory in the energy norm}
\label{sec4:convergence}

In this section, we present an abstract convergence result for the VEM
based on the pair of finite dimensional functional spaces
$\big(\Vvhk,\Qshkk\big)$, see Theorem~\ref{theorem:H1:abstract}.
This result allows us to derive an estimate for the approximation
error on the velocity and the pressure, see
Corollary~\ref{corollary:error:estimate}.
To carry out the convergence analysis in
Section~\ref{subsec:convergence:energy:norm}, we need a few
preliminary results, which are reported in
Section~\ref{subsec:preliminary:results}, and to prove that the
bilinear form $\bsh(\cdot,\cdot)$ is inf-sup stable on
$\Vvhk\times\Qshkk$ on the mesh families satisfiyng
Assumptions~\textbf{(M1)}-\textbf{(M2)}, see
Section~\ref{subsec:inf-sup}.

\subsection{Preliminary results}
\label{subsec:preliminary:results}

In the analysis of the next sections, we make use of the interpolants
of the (smooth enough) vector-valued field $\vv$ and scalar function
$\qs$, respectively.
The interpolants are the vector-valued field $\vvI$ in $\Vvhk$ and the
scalar polynomial $\qsI$ in $\Qshkk$ that have the same degrees of
freedom of $\vv$ and $\qs$.
In this section, we report three technical lemmas regarding $\vvI$ and
$\qsI$.
The first two lemmas state basic results for the approximation of a
vector-valued field by its virtual element interpolant and a scalar
function by its polynomial interpolant onto the subspace of
polynomials, and are presented without a proof.
The third lemma presents an identity that is used in the proof of
Theorem~\ref{theorem:H1:abstract}.

\medskip
\begin{lemma}[Projection error~\cite{BeiraodaVeiga-Brezzi-Cangiani-Manzini-Marini-Russo:2013,Brenner-Scott:1994}]
  \label{lemma:projection:error}
  Under Assumptions~\textbf{(M1)}-\textbf{(M2)}, for every
  vector-valued field $\vv\in\big[\HS{s+1}(\P)\big]^2$ and scalar
  function $\qs\in\HS{s}(\P)$ with $1\leq\ss\leq\ell$, there exists a
  vector polynomial $\vv_{\pi}\in\big[\PS{\ell}(\P)\big]^2$ and a
  scalar polynomial $\qs_{\pi}\in\PS{\ell-1}(\P)$ such that
  \begin{align}
    &\norm{\vv-\vv_{\pi}}{0,\P} + \hP\snorm{\vv-\vv_{\pi}}{1,\P}\leq\Cs\hP^{s+1}\snorm{\vv}{s+1,\P},\\[0.75em]
    &\norm{\qs-\qs_{\pi}}{0,\P} + \hP\snorm{\qs-\qs_{\pi}}{1,\P}\leq\Cs\hP^{s} \snorm{\qs}{s,\P}
  \end{align}
  for some positive constant $\Cs$ that is independent of $\hP$ but
  may depend on the polynomial degree $\ell$ and the mesh regularity
  constant $\varrho$.
\end{lemma}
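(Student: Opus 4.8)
The plan is to construct the polynomials $\vv_\pi$ and $\qs_\pi$ as \emph{averaged Taylor polynomials} and to control the approximation error through the Bramble-Hilbert lemma, exactly in the spirit of~\cite{Brenner-Scott:1994}. The structural hypothesis that makes this work is Assumption~\textbf{(M1)}: since $\P$ is star-shaped with respect to a disk $B$ of radius $r\geq\varrho\hP$, the averaged Taylor polynomial is well defined by integrating the pointwise Taylor expansion against a smooth weight supported in $B$, and---crucially---the resulting Bramble-Hilbert constant depends on $\P$ only through its chunkiness parameter.

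First I would fix, for each scalar component of $\vv$, the averaged Taylor polynomial of order $s+1$, which has degree $s$, and let $\vv_\pi$ collect these component-wise polynomials; since $s\leq\ell$ this gives $\vv_\pi\in\big[\PS{\ell}(\P)\big]^2$, as required. Analogously I would take $\qs_\pi$ to be the averaged Taylor polynomial of $\qs$ of order $s$, which has degree $s-1$; because $s-1\leq\ell-1$, it lies in $\PS{\ell-1}(\P)$. The Bramble-Hilbert lemma, applied with $m=s+1$ for $\vv$ and $m=s$ for $\qs$ (the precise regularity demanded by the two cases), then yields for $k=0,1$
\begin{align*}
  \snorm{\vv-\vv_\pi}{k,\P}\leq\Cs\,\hP^{s+1-k}\,\snorm{\vv}{s+1,\P},
  \qquad
  \snorm{\qs-\qs_\pi}{k,\P}\leq\Cs\,\hP^{s-k}\,\snorm{\qs}{s,\P}.
\end{align*}
Adding the $k=0$ bound to $\hP$ times the $k=1$ bound in each case produces precisely the two combined inequalities of the statement.

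The hard part will be the \emph{uniformity} of the constant $\Cs$ over the whole mesh family $\mathcal{T}$, rather than merely obtaining the estimate on a single element. This is exactly where Assumption~\textbf{(M1)} enters quantitatively: writing $r_{\max}$ for the supremal radius of a disk with respect to which $\P$ is star-shaped, the chunkiness parameter $\gamma_\P=\hP/r_{\max}$ satisfies $\gamma_\P\leq 1/\varrho$ uniformly in $\P$ and in $\hh$, because $r_{\max}\geq r\geq\varrho\hP$. Since the Bramble-Hilbert constant depends on the element only through $\gamma_\P$, together with the fixed data $\ell$ and the space dimension $2$, it is bounded by a single constant depending on $\varrho$ and $\ell$ alone, which is the assertion of the lemma. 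I would close by noting that the passage from a reference-configuration estimate to the $\hP$-scaled estimates above is the standard homogeneity (dilation) argument and introduces no further difficulty.
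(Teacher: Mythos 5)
Your proof is correct. The paper itself presents this lemma \emph{without} a proof, deferring to the cited references (Brenner--Scott and the basic VEM paper), and your argument---averaged Taylor polynomials on the star-shaped element, the Bramble--Hilbert lemma with $m=s+1$ for $\vv$ and $m=s$ for $\qs$, and uniformity of the constant via the chunkiness bound $\gamma_\P\leq 1/\varrho$ furnished by Assumption \textbf{(M1)}---is precisely the standard proof from that reference, so it coincides with the approach the paper implicitly relies on.
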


\medskip
\begin{lemma}[Interpolation error~\cite{BeiraodaVeiga-Brezzi-Cangiani-Manzini-Marini-Russo:2013,Brenner-Scott:1994}]
  \label{lemma:interpolation:error}
  Under Assumptions~\textbf{(M1)}-\textbf{(M2)}, for every
  vector-valued field $\vv\in\big[\HS{s+1}(\P)\big]^2$ and scalar
  function $\qs\in\HS{s}(\P)$ with $1\leq\ss\leq\ell$, there exists a
  vector valued-field $\vvI\in\Vvhk(\P)$ and a scalar field
  $\qsI\in\PS{\ell-1}(\P)$ such that
  \begin{align}
    \norm{\vv-\vvI}{0,\P} + \hP\snorm{\vv-\vvI}{1,\P}\leq\Cs\hP^{s+1}\snorm{\vv}{s+1,\P},\\[0.5em]
    \norm{\qs-\qsI}{0,\P} + \hP\snorm{\qs-\qsI}{1,\P}\leq\Cs\hP^{s} \snorm{\qs}{s,\P}
  \end{align}
  for some positive constant $\Cs$ that is independent of $\hP$ but
  may depend on the polynomial degree $\ell$ and the mesh regularity
  constant $\varrho$.
\end{lemma}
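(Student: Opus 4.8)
The plan is to prove the two estimates separately, since the scalar interpolant $\qsI\in\PS{\ell-1}(\P)$ is an ordinary polynomial approximation while the vector interpolant $\vvI\in\Vvhk(\P)$ is a genuine virtual element function. For the scalar bound I would simply take $\qsI:=\qs_{\pi}$, the polynomial furnished by Lemma~\ref{lemma:projection:error} (equivalently, the local $\LTWO$-projection of $\qs$ onto $\PS{\ell-1}(\P)$); the stated estimate then coincides verbatim with the scalar line of Lemma~\ref{lemma:projection:error} and rests on the classical Bramble--Hilbert/Dupont--Scott theory on star-shaped domains, Assumption~\textbf{(M1)} keeping the constant uniform in $\varrho$. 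The substance of the lemma is therefore the virtual element interpolation estimate for $\vv$, which I would prove component-wise, reducing everything to the scalar space $\Vshk(\P)$.

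So let $\vs$ be a generic component of $\vv$, let $\vsI\in\Vshk(\P)$ be the unique function sharing the degrees of freedom \TERM{D1}--\TERM{D3} with $\vs$, and let $\vs_{\pi}\in\PS{\ell}(\P)$ be the polynomial of Lemma~\ref{lemma:projection:error}. The pivotal observation is that the interpolation reproduces polynomials of degree up to $k$, because $\PS{k}(\P)\subseteq\Vshk(\P)$ and \TERM{D1}--\TERM{D3} are unisolvent; hence, setting $\eta:=\vs-\vs_{\pi}$, linearity gives $\eta_{\INTP}=\vsI-\vs_{\pi}$, so that $\vs-\vsI=\eta-\eta_{\INTP}$. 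A triangle inequality then yields $\norm{\vs-\vsI}{0,\P}+\hP\snorm{\vs-\vsI}{1,\P}\leq\big(\norm{\eta}{0,\P}+\hP\snorm{\eta}{1,\P}\big)+\big(\norm{\eta_{\INTP}}{0,\P}+\hP\snorm{\eta_{\INTP}}{1,\P}\big)$, where the first bracket is $\ORDER{\hP^{\ss+1}}\snorm{\vs}{\ss+1,\P}$ by Lemma~\ref{lemma:projection:error}, so it only remains to bound the interpolant $\eta_{\INTP}$ of the smooth error $\eta$.

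This last estimate is the heart of the matter and the main obstacle: because virtual element functions are not known in closed form, $\eta_{\INTP}$ cannot be estimated directly and one must route the bound through its degrees of freedom. I would first establish a scaled norm equivalence on the finite-dimensional local space, of the form $\norm{\vsh}{0,\P}^2+\hP^2\snorm{\vsh}{1,\P}^2\leq\Cs\,\hP^{2}\,\mathrm{dof}(\vsh)^2$ for every $\vsh\in\Vshk(\P)$, where $\mathrm{dof}(\cdot)$ is the Euclidean norm of the (already scale-invariant) degree-of-freedom vector \TERM{D1}--\TERM{D3}; this follows by mapping to a reference configuration and invoking equivalence of norms on a finite-dimensional space, the constant remaining uniform thanks to \textbf{(M1)}--\textbf{(M2)}. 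Applying it to $\eta_{\INTP}$, whose degrees of freedom equal those of $\eta$, reduces the problem to controlling the vertex values, the scaled edge moments \TERM{D2} and the scaled cell moments \TERM{D3} of $\eta$ in terms of $\norm{\eta}{0,\P}+\hP\snorm{\eta}{1,\P}+\hP^2\snorm{\eta}{2,\P}$: the cell moments are immediate by Cauchy--Schwarz, while the vertex values and edge moments are handled by scaled trace and Sobolev embedding inequalities (the point evaluations requiring $\HS{2}(\P)\hookrightarrow\CS{0}(\P)$ in two dimensions, which is exactly why the hypothesis $\ss\geq1$ is imposed). Inserting the full Dupont--Scott rates $\snorm{\eta}{m,\P}\leq\Cs\hP^{\ss+1-m}\snorm{\vs}{\ss+1,\P}$ for $m=0,1,2$ gives $\norm{\eta_{\INTP}}{0,\P}+\hP\snorm{\eta_{\INTP}}{1,\P}\leq\Cs\hP^{\ss+1}\snorm{\vs}{\ss+1,\P}$, and summing over the two components completes the vector bound. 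The delicate bookkeeping is to verify that every constant coming from the reference-element and trace arguments depends only on $\ell$ and $\varrho$ and not on $\hP$, which is precisely where the mesh regularity Assumptions~\textbf{(M1)}--\textbf{(M2)} enter.
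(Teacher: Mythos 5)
Your skeleton—reduce to components, use that the interpolation operator reproduces $\PS{k}(\P)$ (true, since $\PS{k}(\P)\subseteq\Vshk(\P)$ for both \eqref{eq:BF:regular-space:def} and \eqref{eq:BF:enhanced-space:def} and the degrees of freedom are unisolvent), write $\vs-\vsI=\eta-\eta_{\INTP}$ with $\eta=\vs-\vs_{\pi}$, and control $\eta_{\INTP}$ through its degrees of freedom—is indeed the standard route, and the scalar half of the lemma is handled correctly by taking $\qsI=\qs_{\pi}$. Note that the paper itself offers no proof of this lemma: it is stated and attributed to the cited references, so the relevant comparison is with the argument in that literature, which your outline follows in spirit.

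The genuine gap is in your justification of the pivotal stability estimate $\norm{\vsh}{0,\P}^2+\hP^2\snorm{\vsh}{1,\P}^2\leq\Cs\,\hP^2\,\mathrm{dof}(\vsh)^2$ ``by mapping to a reference configuration and invoking equivalence of norms on a finite-dimensional space.'' For virtual elements this argument does not exist: a mesh family satisfying \textbf{(M1)}--\textbf{(M2)} contains infinitely many affine-inequivalent polygonal shapes (with varying numbers of edges, hence varying local dimension), so there is no fixed reference element, and—more fundamentally—the space $\Vshk(\P)$ is itself defined through a boundary value problem posed on the physical element ($\Delta\vsh\in\PS{k-2}(\P)$, plus the enhancement constraint in \eqref{eq:BF:enhanced-space:def}), so the function space changes with the geometry and a compactness or finite-dimensionality argument on one domain transfers no uniform constant depending only on $\ell$ and $\varrho$. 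Establishing this dof-to-norm equivalence uniformly is a genuinely technical matter (scaled trace, inverse, and Poincar\'e--Friedrichs inequalities on star-shaped polygons, with extra care for the enhanced space, whose stability is known to be more delicate); it is the actual content of the lemma, and your proof assumes it rather than proves it. The standard way to close the gap without norm equivalence is to exploit the PDE characterization of the interpolant: for the space \eqref{eq:BF:regular-space:def}, $\vsI$ solves a local Poisson problem with piecewise-polynomial boundary data interpolating $\vs$ and with $\Delta\vsI\in\PS{k-2}(\P)$ fixed by the moments of $\vs$, so an energy comparison with $\vs_{\pi}$ (testing the weak form of the difference, plus a scaled trace inequality for the boundary mismatch) gives $\snorm{\vs-\vsI}{1,\P}\leq\Cs\hP^{s}\snorm{\vs}{s+1,\P}$ directly, and the $\LTWO$ bound follows by a Poincar\'e-type inequality since the boundary values and low-order moments of $\vs-\vsI$ are controlled; the estimate for the enhanced space is then inherited through the equivalent-projectors construction of Ahmad et al. Your second-order bookkeeping (the need for $\snorm{\eta}{2,\P}$ and $\HS{2}\hookrightarrow\CS{0}$ in 2D, whence $s\geq1$) is correct as far as it goes, but it sits on the unproven equivalence.
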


\medskip
\begin{lemma}[Orthogonality between $\DIV(\uvh-\uvI)$ and $\psh-\psI$]
  Let $\uv\in\big[\HONEzr(\Omega)\big]^2$ be the exact solution of the
  variational formulation of the Stokes problem given
  in~\eqref{eq:stokes:var:A}-\eqref{eq:stokes:var:B}.
  Let $(\uvh,\psh)\in\Vvhk\times\Qshkk$ be the solution of the virtual
  element
  approximation~\eqref{eq:stokes:vem:A}-\eqref{eq:stokes:vem:B}.
  Then, it holds that
  \begin{align}
    \bs(\uvh-\uvI,\psh-\psI) &= 0.
    \label{eq:aux:20}
  \end{align}
\end{lemma}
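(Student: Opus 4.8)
The plan is to reduce the mixed term to the \emph{discrete} bilinear form, where the two structural identities at our disposal --- the exactness property \eqref{eq:bsh-bs} and the discrete divergence constraint \eqref{eq:stokes:vem:B} --- can be brought to bear. Since both $\psh$ and $\psI$ belong to $\Qshkk$, their difference $\qsh:=\psh-\psI$ lies in $\Qshkk\subset\PS{k-1}(\Th)$, while $\uvh-\uvI\in\Vvhk$. Hence the exactness identity \eqref{eq:bsh-bs} applies to the pair $(\uvh-\uvI,\qsh)$ and gives
\begin{align*}
\bs(\uvh-\uvI,\psh-\psI) = \bsh(\uvh-\uvI,\qsh) = \bsh(\uvh,\qsh) - \bsh(\uvI,\qsh).
\end{align*}
The first term vanishes by the discrete constraint \eqref{eq:stokes:vem:B} tested with $\qsh\in\Qshkk$, so the whole expression collapses to $-\bsh(\uvI,\qsh)$, and it remains to prove that $\bsh(\uvI,\qsh)=0$.

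Applying \eqref{eq:bsh-bs} once more to $(\uvI,\qsh)$ turns the discrete form back into the exact one, $\bsh(\uvI,\qsh)=\bs(\uvI,\qsh)$. I would then subtract the exact solution: since $\uv$ is divergence free, the continuous constraint \eqref{eq:stokes:var:B} yields $\bs(\uv,\qsh)=0$ (note $\qsh\in\LTWOzr(\Omega)$), whence
\begin{align*}
\bsh(\uvI,\qsh)=\bs(\uvI,\qsh)=\bs(\uvI-\uv,\qsh)=\int_{\Omega}\qsh\,\DIV(\uv-\uvI)\dV.
\end{align*}
The lemma is thus \emph{equivalent} to the Fortin-type orthogonality $\bs(\uv-\uvI,\qs)=0$ for every $\qs\in\PS{k-1}(\Th)$; that is, the virtual element interpolant must reproduce the divergence of $\uv$ when tested against the discrete pressures.

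To establish this last identity I would work elementwise and integrate by parts on each $\P\in\Th$, writing
\begin{align*}
\int_{\P}\qs\,\DIV(\uv-\uvI)\dV = \int_{\partial\P}(\uv-\uvI)\cdot\norPE\,\qs\ds - \int_{\P}(\uv-\uvI)\cdot\nabla\qs\dV.
\end{align*}
The interior term vanishes because $\nabla\qs\in\big[\PS{k-2}(\P)\big]^2$ and the cell moments \TERM{D3} of $\uvI$ coincide with those of $\uv$, so $\uv-\uvI$ is $\LTWO(\P)$-orthogonal to $\nabla\qs$. The boundary term is the delicate point and the main obstacle: on each edge $\E$ the outward normal $\norPE$ is constant, so one is left with the scalar edge integrals $\int_{\E}(\uv-\uvI)\cdot\norPE\,\qs\ds$, which I would control through the vertex values \TERM{D1} and the edge moments \TERM{D2} that $\uvI$ shares with $\uv$. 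This is precisely where the compatibility between the degree of the pressure space and the edge degrees of freedom of $\Vvhk$ enters, and verifying that the matched moments suffice to annihilate these edge integrals for every admissible $\qs$ is the crux of the argument; summing the resulting elementwise identities over $\Th$ then gives $\bs(\uv-\uvI,\qsh)=0$ and closes the proof.
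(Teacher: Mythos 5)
Your outer reduction coincides exactly with the paper's: both arguments use \eqref{eq:bsh-bs} together with the discrete constraint \eqref{eq:stokes:vem:B} to eliminate $\uvh$, the continuous constraint \eqref{eq:stokes:var:B} to eliminate $\uv$, and both arrive at the same key identity $\bs(\uv,\qs)=\bs(\uvI,\qs)$ for all piecewise polynomials $\qs$ of degree $k-1$ (your ``Fortin-type orthogonality''). The difference lies in how that identity is treated, and there your proposal has a genuine gap --- one you flag yourself but do not close: you never prove that the edge integrals $\int_{\E}(\uv-\uvI)\cdot\norPE\,\qs\ds$ vanish. Moreover, the tools you invoke cannot close it. Your interior term is indeed killed by \TERM{D3}, since $\nabla\qs\in\big[\PS{k-2}(\P)\big]^2$ and the cell moments of $\uvI$ match those of $\uv$ up to degree $k-2$. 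But on each edge the interpolation error $(\uv-\uvI)|_{\E}$ is, by construction, orthogonal to $\PS{k-2}(\E)$ and vanishes at the two endpoints --- nothing more. Since $\restrict{\qs}{\E}$ ranges over $\PS{k-1}(\E)$, the component of degree exactly $k-1$ (say, the degree-$(k-1)$ Legendre polynomial on $\E$) is uncontrolled: the matched vertex values \TERM{D1} contribute nothing because $\uv-\uvI$ already vanishes there, and the moments \TERM{D2} stop one degree short. Your computation therefore proves $\PizP{k-2}\DIV\uv=\PizP{k-2}\DIV\uvI$, i.e., the statement with test degree at most $k-2$ on each element, not the degree-$(k-1)$ statement the lemma requires.

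For comparison, the paper's proof asserts the missing step at the operator level: it claims that $\PizP{k-1}\DIV(\cdot)$ ``only depends on the degrees of freedom of its argument'' and deduces $\PizP{k-1}\DIV\uv=\PizP{k-1}\DIV\uvI$ in one line. That claim is legitimate \emph{within} the virtual element space, where the trace on each edge is a genuine degree-$k$ polynomial uniquely determined by \TERM{D1}--\TERM{D2}; applied to the generic field $\uv$, whose edge trace is not a polynomial, it is exactly equivalent --- via your own integration by parts --- to the vanishing of the edge terms you could not establish. So you have correctly located the crux of the lemma, and your honest flagging of the boundary term exposes precisely the point that the paper's proof compresses into its opening assertion. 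As written, however, your proposal is incomplete, and completing it along the lines you sketch requires an ingredient beyond the moment matching of \TERM{D1}--\TERM{D3} --- for instance an interpolant enjoying a commuting-diagram (Fortin) property for the divergence, of the kind available in the divergence-conforming Stokes spaces of Beir\~ao da Veiga--Lovadina--Vacca that the paper invokes elsewhere (Section~\ref{subsec:inf-sup}).
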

\begin{proof}
  First, note that the composed operator $\PizP{k-1}\DIV(\cdot)$ only
  depends on the degrees of freedom of its argument.
  These degrees of freedom are the same for $\uv$ and
  its virtual element interpolation $\uvI$, so that it must hold
  that $\PizP{k-1}\DIV\uv=\PizP{k-1}\DIV\uvI$.
  Using this property and the definition of the orthogonal projection
  $\PizP{k-1}$ yield
  \begin{align*}
    \bs(\uv,\qs)
    &= \sum_{\P}\int_{\P}\qs\,\DIV\uv\dV
    = \sum_{\P}\int_{\P}\qs\PizP{k-1}\DIV\uv\dV
    = \sum_{\P}\int_{\P}\qs\PizP{k-1}\DIV\uvI\dV
    \\[0.5em]
    &= \sum_{\P}\int_{\P}\qs\,\DIV\uvI\dV
    = \bs(\uvI,\qs)
  \end{align*}
  for every $\qs\in\PS{k-1}(\P)$.
  Eventually, we note that $\bs(\uvI,\qs)=\bs(\uv,\qs)=0$ from
  Eq.~\eqref{eq:stokes:var:B} and $\bs(\uvh,\qs)=\bsh(\uvh,\qs)=0$
  from Eqs.~\eqref{eq:bsh-bs} and~\eqref{eq:stokes:vem:B}.
  So, it holds that $\bs(\uvh-\uvI,\qs)=0$, and relation~\eqref{eq:aux:20}
  immediately follows by setting $\qs=\psh-\psI$.
\end{proof}

\subsection{Discrete Inf-sup stability}
\label{subsec:inf-sup}

The main result of this section is the \emph{discrete inf-sup
stability} property that is stated by the following theorem.
\begin{theorem}
  \label{theorem:discrete:infsup}
  Let $\Vvhk$ and $\Qshkk$ the virtual element space and the piecewise
  polynomial space respectively defined on meshes satisfying
  Assumptions $\textbf{(M1)}-\textbf{(M2)}$ for $k\geq2$
  in~\eqref{eq:VEM:global:space} and~\eqref{eq:SV:scalar:space:def},
  where $\Vvhk(\P)$ can be either the regular local VEM space defined
  by~\eqref{eq:BF:regular-space:def} or the enhanced VEM space defined
  by~\eqref{eq:BF:enhanced-space:def}.
  Then, there exists a real positive constant $\beta$ (independent of
  $\hh$) such that
  \begin{align}
    \inf_{\qsh\in\Qshkk\backslash\{0\}}\sup_{\vvh\in\Vvhk\backslash\{\zerov\}}\frac{\bsh(\vvh,\qsh)}{ \Norm{\qsh}{0,\Omega}\,\snorm{\vvh}{1,\Omega} }
    \geq\beta.
    \label{eq:discrete:infsup}
  \end{align}
\end{theorem}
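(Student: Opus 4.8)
The plan is to establish \eqref{eq:discrete:infsup} through the classical \emph{Fortin operator} criterion, which reduces the discrete inf-sup condition to the continuous one \eqref{eq:inf-sup:continuous:setting} that is already at our disposal. Concretely, it suffices to build a linear operator $\Pi_F:\big[\HONEzr(\Omega)\big]^2\to\Vvhk$ enjoying the two properties
\begin{align}
  \bsh(\Pi_F\vv,\qsh) &= \bs(\vv,\qsh) \qquad\forall\vv\in\big[\HONEzr(\Omega)\big]^2,\,\qsh\in\Qshkk,\label{eq:plan:F1}\\[0.3em]
  \snorm{\Pi_F\vv}{1,\Omega} &\leq C_F\,\snorm{\vv}{1,\Omega} \qquad\forall\vv\in\big[\HONEzr(\Omega)\big]^2,\label{eq:plan:F2}
\end{align}
with $C_F$ independent of $\hh$. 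Granting such an operator, I would fix $\qsh\in\Qshkk\backslash\{0\}$ and, using \eqref{eq:inf-sup:continuous:setting}, pick $\vv\in\big[\HONEzr(\Omega)\big]^2$ with $\bs(\vv,\qsh)\geq\HAT{\beta}\,\norm{\qsh}{0,\Omega}\,\snorm{\vv}{1,\Omega}$; then
\begin{align*}
  \sup_{\vvh\in\Vvhk\backslash\{\zerov\}}\frac{\bsh(\vvh,\qsh)}{\snorm{\vvh}{1,\Omega}}
  \geq \frac{\bsh(\Pi_F\vv,\qsh)}{\snorm{\Pi_F\vv}{1,\Omega}}
  = \frac{\bs(\vv,\qsh)}{\snorm{\Pi_F\vv}{1,\Omega}}
  \geq \frac{\HAT{\beta}}{C_F}\,\norm{\qsh}{0,\Omega},
\end{align*}
so that \eqref{eq:discrete:infsup} holds with $\beta=\HAT{\beta}/C_F$.

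The natural candidate for $\Pi_F$ is the virtual element interpolation operator $\vv\mapsto\vvI$ of Lemma~\ref{lemma:interpolation:error}, for which property \eqref{eq:plan:F1} comes essentially for free. Indeed, exactly as in the orthogonality lemma preceding Section~\ref{subsec:inf-sup}, the composed operator $\PizP{k-1}\DIV(\cdot)$ depends only on the degrees of freedom of its argument, and $\vv$ and $\vvI$ share the same degrees of freedom; hence $\PizP{k-1}\DIV\vvI=\PizP{k-1}\DIV\vv$ on every $\P\in\Th$. Since $\restrict{\qsh}{\P}\in\PS{k-1}(\P)$, the definition of $\PizP{k-1}$ then gives
\begin{align*}
  \bsh(\vvI,\qsh)
  = -\sum_{\P\in\Th}\int_{\P}\qsh\,\PizP{k-1}\DIV\vv\dV
  = -\sum_{\P\in\Th}\int_{\P}\qsh\,\DIV\vv\dV
  = \bs(\vv,\qsh),
\end{align*}
which is precisely \eqref{eq:plan:F1}.

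The crux of the argument, and the step I expect to be the main obstacle, is the uniform $\HONE$-stability \eqref{eq:plan:F2}, compounded by the fact that the vertex degrees of freedom \TERM{D1} require point values, so the raw interpolant $\vvI$ is not even defined on a generic $\vv\in\big[\HONEzr(\Omega)\big]^2$. I would therefore realize $\Pi_F$ by a two-stage construction. First, apply a Cl\'ement/Scott--Zhang-type quasi-interpolation $\Pi_1$ into $\Vvhk$ that is well defined on $\HONE$, is $\HONE$-stable uniformly under Assumptions~\textbf{(M1)}--\textbf{(M2)}, and reproduces the edge-normal flux moments, so that the elementwise mean divergence is matched, i.e. $\PizP{0}\DIV\Pi_1\vv=\PizP{0}\DIV\vv$; this secures \eqref{eq:plan:F2} but not \eqref{eq:plan:F1}. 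Second, restore the remaining moments by an interior correction $\Pi_2$, adding on each element a bubble $\bv_{\P}\in\Vvhk(\P)$ that vanishes on $\partial\P$ (so conformity in $\big[\HONEzr(\Omega)\big]^2$ is preserved) and satisfies $\PizP{k-1}\DIV\bv_{\P}=\PizP{k-1}\DIV(\vv-\Pi_1\vv)$; the right-hand side has zero elementwise mean, hence lies in $\PS{k-1}(\P)\cap\LTWOzr(\P)$, which is exactly the range of $\bv\mapsto\PizP{k-1}\DIV\bv$ over boundary-vanishing $\bv$. Combined with the standard bound $\norm{\vv-\Pi_1\vv}{0,\P}\leq C\,\hP\,\snorm{\vv}{1,\omega_{\P}}$ on the element patch $\omega_{\P}$ and a right inverse of norm $C\,\hP^{-1}$, this yields $\snorm{\bv_{\P}}{1,\P}\leq C\,\snorm{\vv}{1,\omega_{\P}}$ and therefore \eqref{eq:plan:F2} for $\Pi_F=\Pi_1+\Pi_2$. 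The technical heart is verifying, with $\hh$-uniform constants, the local surjectivity of $\bv\mapsto\PizP{k-1}\DIV\bv$ onto $\PS{k-1}(\P)\cap\LTWOzr(\P)$: here the interior cell moments \TERM{D3}, which are present precisely when $k\geq2$ and furnish the adjustable degrees of freedom absent in the unstable $k=1$ Scott--Vogelius case, supply the needed dimensions, and the bound would be obtained by a scaling argument uniform over the shape-regular family guaranteed by \textbf{(M1)}--\textbf{(M2)}.
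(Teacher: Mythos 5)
Your proposal is correct in strategy but takes a genuinely different route from the paper. The paper constructs nothing: it \emph{transfers} the inf-sup inequality from the divergence-free ``Stokes-like'' space $\Vvhkt$ of~\cite[Proposition~4.3]{BeiraodaVeiga-Lovadina-Vacca:2017} to the present ``Poisson-like'' space $\Vvhk$ via the degrees-of-freedom bijection $\T^{\P}_{S|P}$ of~\cite[Lemma~2 and Proposition~1]{Chernov-Marcati-Mascotto:2021}; since corresponding functions share the same polynomial projections, the forms $\bsh(\cdot,\cdot)$ and $\ash(\cdot,\cdot)$ evaluate identically on them, and a short chain of inequalities yields $\beta=\tbeta\,(\TILDE{\alpha}^*)^{-1}\alpha_*$. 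You instead build a Fortin operator from scratch: a Scott--Zhang-type quasi-interpolant that is $\HONE$-stable and matches edge mean values (hence the elementwise mean divergence, which is the necessary compatibility since interior bubbles cannot alter it), followed by a bubble correction driven by the cell moments \TERM{D3} to fix the remaining moments of the projected divergence --- and you correctly tie the hypothesis $k\geq2$ to the availability of \TERM{D3}, which is precisely what is absent in the unstable $\PS{1}-\PS{0}$ case. What each approach buys: yours is self-contained and elementary in spirit, but leaves two nontrivial lemmas unproved (uniform $\HONE$-stability of the quasi-interpolant on polygonal meshes under \textbf{(M1)}--\textbf{(M2)}, with preservation of the edge means, which for $k\geq2$ is feasible since the zeroth edge moment is itself a degree of freedom in \TERM{D2}; and the $\hh$-uniformly bounded right inverse of the bubble divergence map, by scaling and DoF-norm equivalences); the paper outsources exactly this kind of technical work to the cited references --- indeed the proof of Proposition~4.3 in~\cite{BeiraodaVeiga-Lovadina-Vacca:2017} is itself Fortin-type, so you are re-deriving in $\Vvhk$ what the paper imports by the bijection. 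One caveat worth fixing: your preliminary claim that the Fortin identity holds ``for free'' for the raw interpolant $\vvI$ rests on $\PizP{k-1}\DIV(\cdot)$ being determined by the degrees of freedom, which is valid for arguments \emph{in the virtual element space} but not for a generic field $\vv\in\big[\HONEzr(\Omega)\big]^2$: the edge integrals $\int_{\E}\qs\,\vv\cdot\norPE\dS$ with $\qs\in\PS{k-1}(\E)$ are not determined by vertex values and moments against $\PS{k-2}(\E)$ (the same glossed point appears in the paper's own orthogonality lemma). This does not damage your final argument, because your corrected operator enforces $\PizP{k-1}\DIV\Pi_F\vv=\PizP{k-1}\DIV\vv$ directly through the bubble, and the identity $\int_{\P}\qsh\,\PizP{k-1}\DIV\vv\dV=\int_{\P}\qsh\,\DIV\vv\dV$ for $\qsh\in\PS{k-1}(\P)$ needs only the definition of the orthogonal projection, not DoF-computability.
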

\begin{proof}
  To prove this result, we exploit the inf-sup inequality
  of~\cite[Proposition~4.3]{BeiraodaVeiga-Lovadina-Vacca:2017} and the
  result
  of~\cite[Lemma~2 and~Proposition~1]{Chernov-Marcati-Mascotto:2021}.
  We report these results below for completeness of exposition, with
  some (very minor) notational modifications to adapt them to the
  present context.
  The proof and a detailed discussion can be found in the cited
  references.
  \begin{proposition}[Proposition~4.3 from Reference~\cite{BeiraodaVeiga-Lovadina-Vacca:2017}]
    \label{prop:4.3}
    Consider the virtual element space of vector-valued functions:
    \begin{align}
      \Vvhkt =
      \Big\{
      \vvht\in\big[\HONEzr(\Omega)\big]^2\,\textrm{such~that}~\restrict{\vvht}{\P}\in\Vvhkt(\P)
      \,\,\forall\P\in\Th
      \Big\},
    \end{align}
    where the local vector space on every element $\P$ is defined as
    \begin{align}
      \Vvhkt(\P) =
      \Big\{\,
      \vvht\in\big[\HONE(\P)\big]^2\,:\,
      &\restrict{\vvht}{\partial\P}\in\big[\CS{0}(\partial\P)\big]^2,\,
      \restrict{\vvht}{\E}\in\big[\PS{k}(\E)\big]^2\,\forall\E\in\partial\P,\,\nonumber\\[0.25em]
      &-\Delta\vvht-\nabla\ss\in\calG_{k-2}(\P)^{\perp}\quad\textrm{for~some~}\ss\in\LTWO(\P),\nonumber\\[0.25em]
      &\DIV\vvht\in\PS{k-1}(\P)\,
      \Big\},
    \end{align}
    and $\calG_{k-2}(\P)^{\perp}$ is the $\LTWO$-orthogonal complement
    to $\calG_{k-2}(\P)=\nabla\PS{k-1}(\P)$.
    Then, there exists a real positive constant $\tbeta$ (independent
    of $\hh$) such that
    \begin{align}
      \inf_{\qsh\in\Qshkk\backslash\{0\}}\sup_{\vvht\in\Vvhkt\backslash\{\zerov\}}\frac{\bsh(\vvht,\qsh)}{ \Norm{\qsh}{0,\Omega}\,\snorm{\vvht}{1,\Omega} }
      \geq\tbeta.
      \label{eq:discrete:infsup:Prop4.3}
    \end{align}
  \end{proposition}
  The virtual element space $ \Vvhkt$ that is considered in
  Proposition~\ref{prop:4.3} is rather different than the one that is
  considered in our work.
  However, it is proved
  in~\cite[Lemma~2]{Chernov-Marcati-Mascotto:2021} that we can
  establish a bijective correspondance mapping the degrees of freedom
  characterizing the ``Stokes-like'' virtual element fields in
  $\Vvhkt$ (see also~\cite{BeiraodaVeiga-Lovadina-Vacca:2017} for
  their definition) and those characterizing the ``Poisson-like''
  virtual element fields that are discussed in
  Section~\ref{subsec:degrees-of-freddom},
  cf.~\eqref{eq:VEM:global:space}.
  Since the degrees of freedom are the same for both the regular and
  the enhanced space definitions~\eqref{eq:BF:regular-space:def}
  and~\eqref{eq:BF:enhanced-space:def}, the argument that we present
  below holds for both situations.
  Let $\T^{\P}_{S|P}:\Vvhkt(\P)\to\Vvhk(\P)$ denote this mapping.
  We report the result
  of~\cite[Proposition~1]{Chernov-Marcati-Mascotto:2021} as follows.
  \begin{proposition}[Proposition~1 from \cite{Chernov-Marcati-Mascotto:2021}]
    \label{prop:1}
    For all $\vvht\in\Vvhkt$ we have
    \begin{align}
      &\bs(\T^{\P}_{S|P}\vvht,\qs) = \bs(\vvht,\qs)\quad\forall\qs\in\Qshk
      \intertext{and}
      &\Pin{k}(\T^{\P}_{S|P}\vvht)=\Pin{k}(\vvht),\quad
      \Piz{k-2}(\T^{\P}_{S|P}\vvht)=\Piz{k-2}(\vvht).
    \end{align}
  \end{proposition}
  The virtual element spaces $\Vvhk$ (this paper) and $\Vvhkt$
  (Reference~\cite{BeiraodaVeiga-Lovadina-Vacca:2017}) are
  substantially different although they both have the same vector
  polynomial subspace.
  Moreover, the virtual element vector-valued fields in them are
  uniquely identified by rather different sets of degrees of freedom
  (we refer again to~\cite{BeiraodaVeiga-Lovadina-Vacca:2017}
  for their definition in $\Vvhkt$).
  Nonetheless, Proposition~\ref{prop:1} above implies that two
  functions $\vvh\in\Vvhk$ and $\vvht\in\Vvhkt$ have the same
  polynomial projections whenever they are in the bijection
  correspondance through $\T^{\P}_{S|P}$.
  As a consequence, the discrete bilinear forms $\ash(\cdot,\cdot)$
  and $\bsh(\cdot,\cdot)$ and the right-hand side functional
  $\bil{\fvh}{\cdot}$, which only depend on such projection operators,
  must evaluate the same on corresponding functions.
  So, starting from Proposition~\ref{prop:1}, i.e.,
  Reference~\cite[Proposition~1]{Chernov-Marcati-Mascotto:2021}, we
  find that
  \begin{align*}
    \begin{array}{rll}
      \bs(\vvh ,\qsh)
      &=       \bs(\vvht,\qsh)                                                                                  & \mbox{\big[use~\cite[Proposition~4.3]{BeiraodaVeiga-Lovadina-Vacca:2017} \big]} \\[0.5em]
      &\geq    \tbeta                                      \snorm{\vvht}{1,\Omega}  \,\norm{\qsh}{0,\Omega} & \mbox{\big[use~stability~inequality~\cite[Eq.(3.25)]{BeiraodaVeiga-Lovadina-Vacca:2017}\big]} \\[0.5em]
      &\geq    \tbeta\big(\TILDE{\alpha}^*\big)^{-1}        \ash(\vvht,\vvht)^{\frac12}\,\norm{\qsh}{0,\Omega} & \mbox{\big[use~again~\cite[Proposition~1]{Chernov-Marcati-Mascotto:2021}\big]} \\[0.5em]
      &=       \tbeta\big(\TILDE{\alpha}^*\big)^{-1}        \ash(\vvh,\vvh)^{\frac12}  \,\norm{\qsh}{0,\Omega} & \mbox{\big[use~\eqref{eq:ash:stability}\big]} \\[0.5em]
      &\geq    \tbeta\big(\TILDE{\alpha}^*\big)^{-1}\alpha_*\snorm{\vvh}{1,\Omega}   \,\norm{\qsh}{0,\Omega},
    \end{array}
  \end{align*}
  which is the assertion of the theorem after setting
  $\beta=\tbeta\big(\TILDE{\alpha}^*\big)^{-1}\alpha_*$ and
  noting that this real constant is independent of $\hh$.
\end{proof}








\begin{remark}
  An alternative proof of the inf-sup inequality is also possible by
  adapting the argument used to prove such inequality in the virtual
  element method for linear elasticity that is found
  in~\cite{BeiraodaVeiga-Brezzi-Marini:2013}.
\end{remark}

\subsection{Convergence in the energy norm}
\label{subsec:convergence:energy:norm}

\begin{theorem}[Abstract convergence result]
  \label{theorem:H1:abstract}
  Let
  $(\uv,\ps)\in\big[\HS{1+s}(\Omega)\cap\HONEzr(\Omega)\big]^2\times\LTWOzr(\Omega)$,
  $s\geq1$, denote the solution of the variational formulation of the
  Stokes problem given
  in~\eqref{eq:stokes:var:A}-\eqref{eq:stokes:var:B}.
  Let $(\uvh,\psh)\in\Vvhk\times\Qshkk$ denote the solution of the
  virtual element variational formulation
  \eqref{eq:stokes:vem:A}-\eqref{eq:stokes:vem:B} for any polynomial
  degree $k\geq2$ on the mesh family $\{\Th\}_{\hh}$ satisfying the
  mesh regularity assumptions $\textbf{(M1)}-\textbf{(M2)}$.
  Then, the solution pair $(\uvh,\psh)\in\Vvhk\times\Qshkk$ exists and
  is unique, and for every interpolation approximation $\uvI\in\Vvhk$
  and piecewise polynomial approximation
  $\uv_{\pi}\in\big[\PS{k}(\Th)\big]^2$, the following abstract
  estimate holds:
  \begin{align}
    \snorm{\uv-\uvh}{1,\Omega} + \norm{\ps-\psh}{0,\Omega}
    \leq\Cs\Bigg(
    &
    \snorm{\uv-\uvI}{1,\Omega}
    + \snorm{\uv-\uv_{\pi}}{1,\hh}
    + \norm{\ps-\psI}{0,\Omega}
    \nonumber\\[0.125em] &
    + \sup_{\vvh\in\Vvhk\backslash\{\zerov\}}\frac{\ABS{\bil{\fvh}{\vvh}-\scal{\fv}{\vvh}}}{\snorm{\vvh}{1,\Omega}}
    \Bigg)
    \label{eq:theo:abstract}
  \end{align}
  for some real, strictly positive constant $\Cs$ independent of
  $\hh$.
\end{theorem}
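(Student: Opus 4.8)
The plan is to recognize \eqref{eq:stokes:vem:A}-\eqref{eq:stokes:vem:B} as a discrete saddle-point problem and to apply the standard Brezzi theory. Existence and uniqueness of $(\uvh,\psh)$ follow immediately from the coercivity of $\ash(\cdot,\cdot)$ on all of $\Vvhk$, relation~\eqref{eq:ash:coercivity}, together with its continuity~\eqref{eq:ash:global:continuity} and the discrete inf-sup stability of $\bsh(\cdot,\cdot)$ furnished by Theorem~\ref{theorem:discrete:infsup}. Coercivity on the full space is in particular coercivity on the kernel of $\bsh(\cdot,\cdot)$, so every hypothesis of the saddle-point well-posedness theorem is in force, and the discrete pair exists and is unique.

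For the velocity error I would set $\dvh := \uvh - \uvI \in \Vvhk$ and start from coercivity, $\alpha_*\snorm{\dvh}{1,\Omega}^2 \leq \ash(\dvh,\dvh)$. The idea is to write $\ash(\dvh,\dvh) = \ash(\uvh,\dvh) - \ash(\uvI,\dvh)$, eliminate $\ash(\uvh,\dvh)$ through the discrete momentum equation~\eqref{eq:stokes:vem:A}, and then insert the piecewise polynomial $\uv_{\pi}$ so that the polynomial consistency~\eqref{eq:consistency}, applied element by element (polynomials lie in the local spaces $\Vvhk(\P)$) and summed, trades the virtual form $\ash(\uv_{\pi},\dvh)$ for the broken exact form $\as(\uv_{\pi},\dvh)$. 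Substituting the continuous momentum equation for $\as(\uv,\dvh)$ collapses the identity into four residuals tested against $\dvh$: the right-hand side inconsistency $\bil{\fvh}{\dvh}-\scal{\fv}{\dvh}$, the pressure term $\bs(\dvh,\ps-\psh)$, and two approximation terms involving $\uvI-\uv_{\pi}$ and $\uv-\uv_{\pi}$. The crucial simplification is that $\bsh(\dvh,\psh)=\bs(\dvh,\psh)$ by~\eqref{eq:bsh-bs} and that the orthogonality identity~\eqref{eq:aux:20}, $\bs(\uvh-\uvI,\psh-\psI)=0$, removes the discrete part of the pressure error, so that $\bs(\dvh,\ps-\psh)$ reduces to $\bs(\dvh,\ps-\psI)$. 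Bounding the residuals by local continuity~\eqref{eq:ash:local:continuity} summed with Cauchy-Schwarz, by continuity of $\as$, and by the elementary estimate $\abs{\bs(\vv,\qs)}\leq\Cs\snorm{\vv}{1,\Omega}\norm{\qs}{0,\Omega}$, then dividing by $\snorm{\dvh}{1,\Omega}$, controls $\snorm{\dvh}{1,\Omega}$ by the four terms on the right of~\eqref{eq:theo:abstract}; the triangle inequality $\snorm{\uv-\uvh}{1,\Omega}\leq\snorm{\uv-\uvI}{1,\Omega}+\snorm{\dvh}{1,\Omega}$ then closes the velocity bound.

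For the pressure I would set $\ssh := \psh-\psI \in \Qshkk$ and feed it into the discrete inf-sup inequality~\eqref{eq:discrete:infsup}, yielding $\beta\norm{\ssh}{0,\Omega}\leq\sup_{\vvh}\bsh(\vvh,\ssh)/\snorm{\vvh}{1,\Omega}$. As before, $\bsh(\vvh,\ssh)=\bsh(\vvh,\psh)-\bs(\vvh,\psI)$; replacing $\bsh(\vvh,\psh)$ via the discrete equation, subtracting the continuous equation, and inserting $\uv_{\pi}$ through consistency~\eqref{eq:consistency} expresses $\bsh(\vvh,\ssh)$ as the same type of residuals tested against a generic $\vvh$, now including a term $\ash(\uv_{\pi}-\uvh,\vvh)$ that is controlled by the velocity error already bounded above. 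Dividing by $\snorm{\vvh}{1,\Omega}$, taking the supremum, and using $\beta$ gives $\norm{\ssh}{0,\Omega}$ in terms of the right-hand side of~\eqref{eq:theo:abstract}; finally $\norm{\ps-\psh}{0,\Omega}\leq\norm{\ps-\psI}{0,\Omega}+\norm{\ssh}{0,\Omega}$ completes the proof.

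The step I expect to be most delicate is the bookkeeping in the velocity estimate: the repeated add-and-subtract of $\uv_{\pi}$ together with the consistency property to eliminate the non-computable virtual contributions, and the appeal to~\eqref{eq:aux:20} that decouples the velocity error from the discrete pressure error. Keeping every sign correct so that only genuinely approximable residuals survive is where the real work lies, whereas the individual estimates are routine applications of Cauchy-Schwarz and the stated stability and continuity constants.
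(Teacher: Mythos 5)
Your proposal is correct and follows essentially the same route as the paper's proof: coercivity of $\ash$ applied to $\dvh=\uvh-\uvI$ with the discrete momentum equation, insertion of $\uv_{\pi}$ via the polynomial consistency~\eqref{eq:consistency}, the identities~\eqref{eq:bsh-bs} and~\eqref{eq:aux:20} to reduce the pressure coupling to $\bs(\dvh,\ps-\psI)$, then the discrete inf-sup inequality~\eqref{eq:discrete:infsup} applied to $\ssh=\psh-\psI$, all closed by triangle inequalities. If anything, your explicit observation that the pressure estimate produces a term $\ash(\uv_{\pi}-\uvh,\vvh)$ that must be absorbed using the already-established velocity bound is slightly more careful than the paper's terse claim that its term \TERM{R6} is bounded exactly as \TERM{R3}.
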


\begin{proof}
  First, we note that the stability property~\eqref{eq:ash:stability}
  and the Cauchy-Schwarz inequality imply that
  \begin{align}
    \Cs\snorm{\vvh}{1,\Omega}^2
    &\leq
    \ash(\vvh,\vvh)
    \phantom{\Cs\snorm{\vvh}{1,\Omega}\,\snorm{\wvh}{1,\Omega}}\hspace{-0.5cm}
    \forall\vvh\in\Vvhk,
    \label{eq:coercivity}
    \\[0.5em]
    \ABS{\ash(\vvh,\wvh)}
    &\leq
    \Cs\snorm{\vvh}{1,\Omega}\,\snorm{\wvh}{1,\Omega}
    \phantom{\ash(\vvh,\vvh)}\hspace{-0.5cm}
    \forall\vvh,\,\wvh\in\Vvhk,
    \label{eq:continuity}
  \end{align}
  for some real, strictly positive constant $\Cs$.
  The bilinear form $\bsh(\cdot,\cdot)$ is bounded since an
  application of the Cauchy-Schwarz inequality yields that
  \begin{align*}
    \ABS{\bsh(\vvh,\qs)}\leq\Cs\snorm{\vvh}{1,\Omega}\,\norm{\qs}{0,\Omega}
    \qquad\forall\vvh\in\Vvhk,\,\qs\in\Qshkk.
  \end{align*}
  According to the theory of saddle-point problems,
  see~\cite{Boffi-Brezzi-Fortin:2013}, the well-posedness of the VEM
  follows from the coercivity and boundedness of the bilinear form
  $\ash(\cdot,\cdot)$ and the discrete inf-sup
  inequality~\eqref{eq:discrete:infsup} and boundedness of the
  bilinear form $\bsh(\cdot,\cdot)$.

  \medskip
  To prove inequality~\eqref{eq:theo:abstract}, we introduce the
  virtual element field $\dvh=\uvh-\uvI\in\Vvhk$.
  The coercivity of $\ash(\cdot,\cdot)$,
  cf. Eq.~\eqref{eq:coercivity}, implies that:
  \begin{align*}
    \Cs\snorm{\dvh}{1,\Omega}^2
    \leq \ash(\dvh,\dvh)
    = \ash(\uvh,\dvh) - \ash(\uvI,\dvh).
  \end{align*}
  Then, we use~\eqref{eq:stokes:vem:A} and~\eqref{eq:ash:def} and add
  and substract $\uv_{\pi}$, we use~\eqref{eq:bsh-bs} and
  ~\eqref{eq:aux:20}, which implies that
  $\bsh(\dvh,\psh)=\bs(\dvh,\psh)=\bs(\dvh,\psI)$, we use the
  consistency relation~\eqref{eq:consistency} for every local bilinear
  form $\asPh(\cdot,\cdot)$,
  and we finally add and substract $\uv$ to the last term to obtain:
  \begin{align*}
    \Cs\snorm{\dvh}{1,\Omega}^2
    &\leq\bil{\fvh}{\dvh} - \bsh(\dvh,\psh) - \sum_{\P\in\Th}\Big(\asPh(\uvI-\uv_{\pi},\dvh) + \asPh(\uv_{\pi},\dvh) \Big)
    \\[0.5em]
    &= \bil{\fvh}{\dvh} - \bs(\dvh,\psI) - \sum_{\P\in\Th}\Big(\asPh(\uvI-\uv_{\pi},\dvh) + \asP (\uv_{\pi},\dvh) \Big)
    \\[0.5em]
    &= \bil{\fvh}{\dvh} - \bs(\dvh,\psI) - \sum_{\P\in\Th}\asPh(\uvI-\uv_{\pi},\dvh)
    \\[0.5em]
    &\qquad
    - \sum_{\P\in\Th}\Big( \asP (\uv_{\pi}-\uv,\dvh) + \asP(\uv,\dvh) \Big).
  \end{align*}
  We use~\eqref{eq:asP:def},~\eqref{eq:stokes:var:A}, and rearrange
  the terms:
  \begin{align*}
    \Cs\snorm{\dvh}{1,\Omega}^2
    &\leq \bil{\fvh}{\dvh} - \bs(\dvh,\psI) - \sum_{\P\in\Th}\asPh(\uvI-\uv_{\pi},\dvh) - \sum_{\P\in\Th}\asP (\uv_{\pi}-\uv,\dvh)
    \\[0.5em]
    &\qquad
    - \as(\uv,\dvh)
    \\[0.5em]
    &= \bil{\fvh}{\dvh} - \bs(\dvh,\psI) - \sum_{\P\in\Th}\asPh(\uvI-\uv_{\pi},\dvh) - \sum_{\P\in\Th}\asP (\uv_{\pi}-\uv,\dvh)
    \\[0.5em]
    &\qquad
    - \Big( \scal{\fv}{\dvh} - \bs(\dvh,\ps) \Big)
    \\[0.5em]
    &= \Big[ \bil{\fvh}{\dvh} - \scal{\fv}{\dvh} \Big]
    +  \Big[ \bs(\dvh,\ps)    - \bs(\dvh,\psI)   \Big]
    \\[0.5em]
    &\qquad
    + \Bigg[ - \sum_{\P\in\Th}\asPh(\uvI-\uv_{\pi},\dvh) - \sum_{\P\in\Th}\asP (\uv_{\pi}-\uv,\dvh) \Bigg]
    \nonumber\\[0.4em]
    &
    = \big[\TERM{R1}\big] + \big[\TERM{R2}\big] + \big[\TERM{R3}\big].
  \end{align*}
  At this point, we continue estimating separately the three terms
  $\TERM{R1}$, $\TERM{R2}$, and $\TERM{R3}$, which are
  identified in the last step of the above inequality chain by the
  square brackets.

  \medskip
  \medskip
  Term $\TERM{R1}$ is bounded from above by multiplying and dividing
  by $\dvh$, and then taking the supremum
  \begin{align*}
    \ABS{\TERM{R1}}
    &= \ABS{\bil{\fvh}{\dvh} - \scal{\fv}{\dvh}}
    = \frac{ \ABS{\bil{\fvh}{\dvh} - \scal{\fv}{\dvh}} }{ \snorm{\dvh}{1,\Omega} }\,\snorm{\dvh}{1,\Omega}
    \\[0.5em]
    &\leq \left[\sup_{\vvh\in\Vvhk\backslash\{\zerov\}}\frac{ \ABS{\bil{\fvh}{\vvh} - \scal{\fv}{\vvh}} }{ \snorm{\vvh}{1,\Omega} }\right]\,\snorm{\dvh}{1,\Omega}.
  \end{align*}
  Term $\TERM{R2}$ is bounded from above by
  applying the Cauchy-Schwarz inequality to find that
  \begin{align*}
    \ABS{\TERM{R2}}
    &
    = \ABS{\bs(\dvh,\ps-\psI)}
    = \ABS{\scal{\DIV\dvh}{\ps-\psI}}
    \leq \norm{\DIV\dvh}{0,\Omega}\,\norm{\ps-\psI}{0,\Omega}
    \\[0.5em] &
    \leq \snorm{\dvh}{1,\Omega}\,\norm{\ps-\psI}{0,\Omega}.
  \end{align*}
  Finally, term $\TERM{R3}$ is bounded from above by using the local
  continuity of $\asPh(\cdot,\cdot)$,
  cf.~\eqref{eq:ash:local:continuity}, and $\asP(\cdot,\cdot)$
  to find that
  \begin{align*}
    \ABS{\TERM{R3}}
    &= \ABS{ \sum_{\P\in\Th}\Big( \asPh(\uvI-\uv_{\pi},\dvh) + \asP(\uv_{\pi}-\uv,\dvh) \Big) }
    \nonumber\\[0.5em]
    &\leq\sum_{\P\in\Th}\Big(\ABS{\asPh(\uvI-\uv_{\pi},\dvh)} + \ABS{\asP(\uv_{\pi}-\uv,\dvh)}\Big)
    \nonumber\\[0.5em]
    &\leq\sum_{\P\in\Th}\Big(
    \alpha^*\snorm{\uvI-\uv_{\pi}}{1,\P} +
    \snorm{\uv-\uv_{\pi}}{1,\P}
    \Big)\snorm{\dvh}{1,\P}.
  \end{align*}
  Then, we add and subtract $\uv$ in the first summation argument,
  and, in the last step, we introduce the broken seminorm
  $\snorm{\cdot}{1,\hh}^2=\sum_{\P\in\Th}\snorm{\cdot}{1,\P}^2$ to
  find that
  \begin{align*}
    \ABS{\TERM{R3}}
    &\leq
    \sum_{\P\in\Th}\Big(
    \alpha^*\snorm{\uvI-\uv}{1,\P} +
    (1+\alpha^*)\snorm{\uv-\uv_{\pi}}{1,\P}
    \Big)\,\snorm{\dvh}{1,\P}
    \nonumber\\[0.5em]
    &\leq\Big(\,
    \alpha^*\snorm{\uv-\uv_{I}}{1,\Omega} + (1+\alpha^*)\snorm{\uv-\uv_{\pi}}{1,\hh}
    \,\Big)
    \,\snorm{\dvh}{1,\Omega}.
  \end{align*}

  We are left to derive a bound for the pressure term.
  To this end, we consider $\ssh=\psh-\psI$.
  Note that $\psh$ is the solution
  of~\eqref{eq:stokes:vem:A}-\eqref{eq:stokes:vem:B} and the integral
  of $\psI$ on $\Omega$ is zero because $\restrict{\psI}{\P}$ has the
  same degrees of freedom, i.e., polynomial moments, of
  $\restrict{\ps}{\P}$, so that
  \begin{align*}
    \int_{\Omega}\psI\dV
    = \sum_{\P\in\Th}\int_{\P}\psI\dV
    = \sum_{\P\in\Th}\int_{\P}\ps\dV
    = \int_{\Omega}\ps\dV=0.
  \end{align*}
  Henceforth, the global interpolant $\psI$, which is built from the
  local interpolant that is such that
  $\restrict{\psI}{\P}=\big(\restrict{\ps}{\P}\big)_{\INTP}\in\Qshkk(\P)$,
  belongs to $\Qshkk$.
  We start the derivation of a bound for $\norm{\ssh}{0,\Omega}$ from
  the inf-sup condition~\eqref{eq:discrete:infsup}, which implies that
  for every $\ssh\in\Qshkk$, there exists a vector field
  $\vvh\in\Vvhk$ such that
  \begin{align*}
    \beta\norm{\ssh}{0,\Omega}\snorm{\vvh}{1,\Omega}
    \leq \bsh(\vvh,\ssh).
  \end{align*}
  Then, we split $\ssh=\psh-\psI$, we use
  equation~\eqref{eq:stokes:vem:A} and add~\eqref{eq:stokes:var:A} and
  we find that
  \begin{align*}
    &\bsh(\vvh,\ssh)
    =   \bsh(\vvh,\psh) - \bsh(\vvh,\psI)
    = -\ash(\uvh,\vvh) + \bil{\fvh}{\vvh} - \bsh(\vvh,\psI)
    \\[0.5em]
    &\qquad
    = -\ash(\uvh,\vvh) + \bil{\fvh}{\vvh} - \bsh(\vvh,\psI)
    + \Big( \as(\uv,\vvh) + \bs(\vvh,\ps) - \scal{\fv}{\vvh} \big).
  \end{align*}
  We rearrange the terms, we use~\eqref{eq:asP:def}
  and~\eqref{eq:ash:def}, we add and substract $\uv_{\pi}$, we note
  again that $\bsh(\vvh,\psI)=\bs(\vvh,\psI)$,
  use~\eqref{eq:consistency} with $\qv=\uv_{\pi}$ and we find that
  \begin{align}
    \beta\norm{\ssh}{0,\Omega}\snorm{\vvh}{1,\Omega}
    &\leq \sum_{\P\in\Th}\Big( \asP(\uv,\vvh) - \asPh(\uvh,\vvh) \Big) + \bil{\fvh}{\vvh} - \scal{\fv}{\vvh}
    \nonumber\\
    &\qquad+ \bs(\vvh,\ps) - \bsh(\vvh,\psI)
    \nonumber\\[0.5em]
    &=
    \Big[ \bil{\fvh}{\vvh} - \scal{\fv}{\vvh} \Big] + \Big[ \bs(\vvh,\ps) - \bs(\vvh,\psI) \Big]
    \nonumber\\
    &\qquad+  
    \sum_{\P\in\Th}\Big( \asP(\uv-\uv_{\pi},\vvh) - \asPh(\uvh-\uv_{\pi},\vvh) \Big)
    \nonumber\\[0.5em]
    &= \big[\TERM{R4}\big] + \big[\TERM{R5}\big] + \big[\TERM{R6}\big],
  \end{align}
  where, again, terms $\TERM{R4}$, $\TERM{R5}$, and $\TERM{R6}$
  are identified by the square brackets.
  As for the term $\dvh$, we continue estimating each term separately.

  \medskip
  Term $\TERM{R4}$ is bounded from above as we bounded term
  $\TERM{R1}$ with $\vvh$ instead of $\dvh$
  \begin{align*}
    \ABS{\TERM{R4}}
    = \ABS{\bil{\fvh}{\vvh} - \scal{\fv}{\vvh}}
    \leq \left[ \sup_{\vvh\in\Vvhk\backslash\{\zerov\}} \frac{ \ABS{\bil{\fvh}{\vvh} - \scal{\fv}{\vvh}} }{ \snorm{\vvh}{1,\Omega} }\right]\,\snorm{\vvh}{1,\Omega}.
  \end{align*}
  Term $\TERM{R5}$ is bounded from above as we bounded term
  $\TERM{R2}$ with $\vvh$ instead of $\dvh$
  \begin{align*}
    \ABS{\TERM{R5}}
    \leq \snorm{\vvh}{1,\Omega}\,\norm{\psI-\ps}{0,\Omega}.
  \end{align*}
  Term $\TERM{R6}$ is bounded from above as we bounded term
  $\TERM{R3}$ with $\vvh$ instead of $\dvh$
  \begin{align*}
    \ABS{\TERM{R6}}
    &\leq\Big(
    \alpha^*\snorm{\uv-\uv_{I}}{1,\Omega} +
    (1+\alpha^*)\snorm{\uv-\uv_{\pi}}{1,\hh}
    \Big)\,\snorm{\vvh}{1,\Omega},
  \end{align*}

  \medskip
  Eventually, we obtain the energy estimate of
  inequality~\eqref{eq:theo:abstract} by adding and substracting
  $\uvI$ and $\psI$ in the two terms of the left-hand side
  of~\eqref{eq:theo:abstract} and using the triangle inequalities
  \begin{align}
    \snorm{\uv-\uvh}{1,\Omega} \leq \snorm{\uv-\uvI}{1,\Omega} + \snorm{\uvI-\uvh}{1,\Omega}, \\[0.2em]
    \norm {\ps-\psh}{0,\Omega} \leq \norm {\ps-\psI}{0,\Omega} + \snorm{\psI-\psh}{0,\Omega}.
  \end{align}
  We estimate the first term in the right-hand side of the two
  inequalities above by using standard estimates of the interpolation
  errors provided by Lemma~\ref{lemma:projection:error} and the second
  term by substituting the bounds we previously derived for the terms
  $\TERM{R1}-\TERM{R6}$.
\end{proof}

\begin{remark}
  In the proof of Theorem~\ref{theorem:H1:abstract}, we need to deal
  with a pressure dependent term in the estimate of the velocity error,
  e.g., term~\TERM{R2}.
  A different approach permits the derivation of an estimate of the
  velocity error that does not require to cope with pressure terms,
  see, e.g.,~\cite[Theorem~4.6]{BeiraodaVeiga-Lovadina-Vacca:2017}.
\end{remark}

An error estimate follows immediate from
Theorem~\ref{theorem:H1:abstract}.
We state it in the following corollary.

\begin{corollary}[Error estimate]
  \label{corollary:error:estimate}
  Let $(\uvh,\psh)\in\Vvhk\times\Qshkk$, $k\geq2$, be the virtual
  element solution fields approximating
  $\uv\in\big[\HS{s+1}(\Omega)\cap\HONEzr(\Omega)\big]^2$ and
  $\ps\in\HS{s}(\Omega)\cap\LTWOzr(\Omega)$, and take
  $\fv\in\big[\HS{s}(\Omega)\big]^2$ for $1\leq\ss\leq\ks$.
  Then, under the assumptions of Theorem~\ref{theorem:H1:abstract}, it
  holds that
  \begin{align}
    \label{eq:the:H1:estimates}
    \snorm{\uv-\uvh}{1,\Omega} + \norm{\ps-\psh}{0,\Omega}
    \leq \Cs\hh^{s}\Big(
    \norm{\uv}{s+1,\Omega} + \norm{\ps}{s,\Omega} + \norm{\fv}{s,\Omega} 
    \Big)
  \end{align}
  for some real, strictly positive constant $\Cs$ independent of
  $\hh$.
\end{corollary}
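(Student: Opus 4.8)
The plan is to treat the corollary as a direct consequence of the abstract estimate~\eqref{eq:theo:abstract} in Theorem~\ref{theorem:H1:abstract}, by bounding each of the four terms on its right-hand side and showing that every one of them is $\ORDER{\hh^{s}}$. Concretely, I would make the admissible choices $\uvI\in\Vvhk$ equal to the virtual element interpolant of $\uv$, $\uv_{\pi}\in\big[\PS{k}(\Th)\big]^2$ equal to the elementwise polynomial projection of $\uv$, and $\psI\in\Qshkk$ equal to the polynomial interpolant of $\ps$; these are exactly the objects for which Lemmas~\ref{lemma:projection:error} and~\ref{lemma:interpolation:error} furnish local error bounds with $\ell=k$.

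For the three interpolation/projection terms this is essentially bookkeeping. First I would apply the interpolation estimate of Lemma~\ref{lemma:interpolation:error} elementwise to obtain $\snorm{\uv-\uvI}{1,\P}\le\Cs\hP^{s}\snorm{\uv}{s+1,\P}$, square, sum over $\P\in\Th$, and use $\hP\le\hh$ together with $\snorm{\uv}{s+1,\Omega}\le\norm{\uv}{s+1,\Omega}$ to reach the global bound $\snorm{\uv-\uvI}{1,\Omega}\le\Cs\hh^{s}\norm{\uv}{s+1,\Omega}$. The broken-seminorm term $\snorm{\uv-\uv_{\pi}}{1,\hh}$ is handled identically from the projection estimate of Lemma~\ref{lemma:projection:error}. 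For the pressure, the second inequality of Lemma~\ref{lemma:interpolation:error} gives $\norm{\ps-\psI}{0,\P}\le\Cs\hP^{s}\snorm{\ps}{s,\P}$, which summed yields $\norm{\ps-\psI}{0,\Omega}\le\Cs\hh^{s}\norm{\ps}{s,\Omega}$. Here the hypotheses $\uv\in\big[\HS{s+1}(\Omega)\big]^2$, $\ps\in\HS{s}(\Omega)$ and $1\le s\le k$ are precisely what is needed for the lemmas to apply with $\ell=k$.

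The one term requiring genuine care — and the main obstacle — is the right-hand side consistency term. The estimate~\eqref{eq:fv:bound:0} as stated in the excerpt assumes $\fv\in\big[\HS{\kb+1}(\Omega)\big]^{\dims}$, whereas the corollary only assumes $\fv\in\big[\HS{s}(\Omega)\big]^2$ with $1\le s\le k$, so I cannot quote it verbatim. Instead I would redo its derivation keeping the available regularity: exploiting that $\big(1-\PizP{\kb}\big)\fv$ is $\LTWO$-orthogonal to the constant $\PizP{0}\vvh$, Cauchy--Schwarz gives $\big|\int_{\P}(\PizP{\kb}\fv-\fv)\vvh\big|\le\norm{\PizP{\kb}\fv-\fv}{0,\P}\,\norm{\vvh-\PizP{0}\vvh}{0,\P}$, and then the standard polynomial approximation bound $\norm{\PizP{\kb}\fv-\fv}{0,\P}\le\Cs\hP^{\min(\kb+1,\,s)}\snorm{\fv}{\min(\kb+1,\,s),\P}$ together with the Poincar\'e-type bound $\norm{\vvh-\PizP{0}\vvh}{0,\P}\le\Cs\hP\snorm{\vvh}{1,\P}$ produces a local factor $\hP^{\min(\kb+1,\,s)+1}$. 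The delicate point is to check that this exponent is at least $s$ for \emph{both} admissible choices of $\kb$ and for the whole range $1\le s\le k$: for the regular space $\kb=\max(0,k-2)$ one has $\min(\kb+1,s)+1\ge s$ exactly because $s\le k$, while for the enhanced space $\kb=k$ it holds trivially. Summing over elements and applying the discrete Cauchy--Schwarz inequality then gives $\sup_{\vvh\in\Vvhk\backslash\{\zerov\}}\snorm{\vvh}{1,\Omega}^{-1}\,\ABS{\bil{\fvh}{\vvh}-\scal{\fv}{\vvh}}\le\Cs\hh^{s}\norm{\fv}{s,\Omega}$.

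Finally I would collect the four bounds, factor out the common $\hh^{s}$, and absorb all constants into a single $\Cs$ independent of $\hh$, yielding~\eqref{eq:the:H1:estimates}. No further compatibility check is needed, since $k\ge2$ already guarantees the discrete inf-sup hypothesis underlying Theorem~\ref{theorem:H1:abstract}, on which the abstract estimate rests.
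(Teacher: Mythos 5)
Your proposal is correct and follows essentially the same route as the paper, whose entire proof consists of applying Lemmas~\ref{lemma:projection:error} and~\ref{lemma:interpolation:error} and the source-term estimates~\eqref{eq:fv:bound:0}--\eqref{eq:fv:bound:1} to the right-hand side of the abstract bound~\eqref{eq:theo:abstract}. Your re-derivation of the consistency term with the exponent $\min(\kb+1,s)+1\geq s$ is in fact a more careful fill-in of the one step the paper glosses over, since~\eqref{eq:fv:bound:0} as stated requires $\fv\in\big[\HS{\kb+1}(\Omega)\big]^{2}$, which the corollary's hypothesis $\fv\in\big[\HS{s}(\Omega)\big]^{2}$ with $s\leq k$ does not supply when $s<\kb+1$.
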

\begin{proof}
  Estimate~\eqref{eq:the:H1:estimates} follows from a straightforward
  application of the results of Lemmas~\ref{lemma:interpolation:error}
  and~\ref{lemma:projection:error}, and
  estimates~\eqref{eq:fv:bound:0}-\eqref{eq:fv:bound:1} to the
  right-hand side of~\eqref{eq:theo:abstract}.
\end{proof}

\begin{remark}
  The abstract convergence result of Theorem~\ref{theorem:H1:abstract}
  and the error estimate of Corollary~\ref{corollary:error:estimate}
  hold for $k\geq2$.
  On the one hand, this limitation is a consequence of
  Theorem~\ref{theorem:discrete:infsup}, which states that the inf-sup
  stability holds for $k\geq2$.
  On the other hand, a convergence result for $k=1$ does not hold in
  general since we know that this scheme can be unstable on some
  meshes, including triangular and quadrilateral meshes.
  An extension to $k=1$ is however possible by requiring the mesh
  elements to satisfy stronger conditions than Assumptions
  $\textbf{(M1)}-\textbf{(M2)}$ and including edge bubble functions in
  the formulation of the method.
  An example of this approach is provided by the work of
  Reference~\cite{BeiraodaVeiga-Lipnikov:2010}.
\end{remark}

\section{Numerical experiments}
\label{sec:numerical}


\begin{figure}
  \centering
  \begin{tabular}{cccccc}
    \hspace{-0.42cm}\includegraphics[scale=0.2]{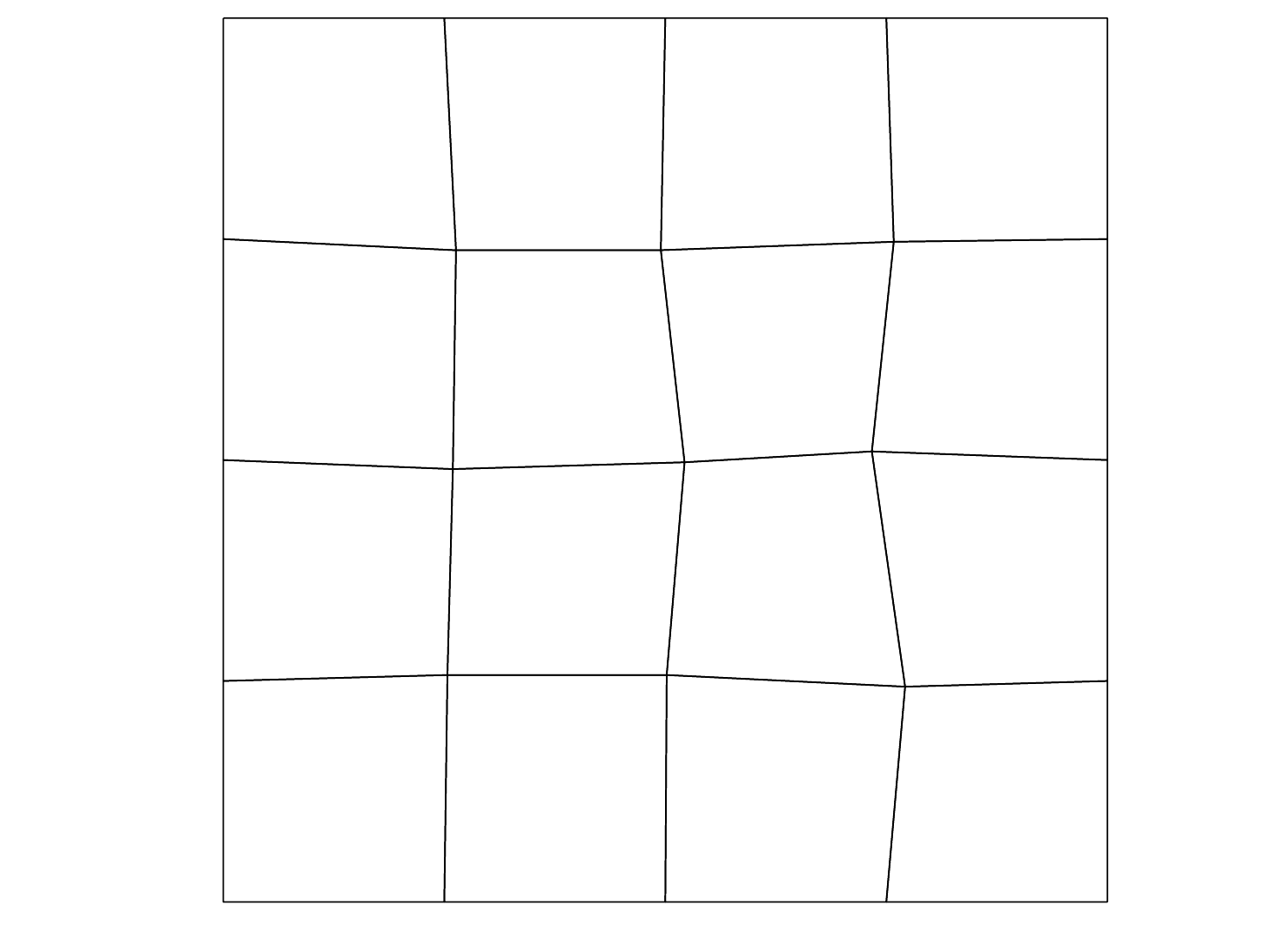} & 
    \hspace{-0.42cm}\includegraphics[scale=0.2]{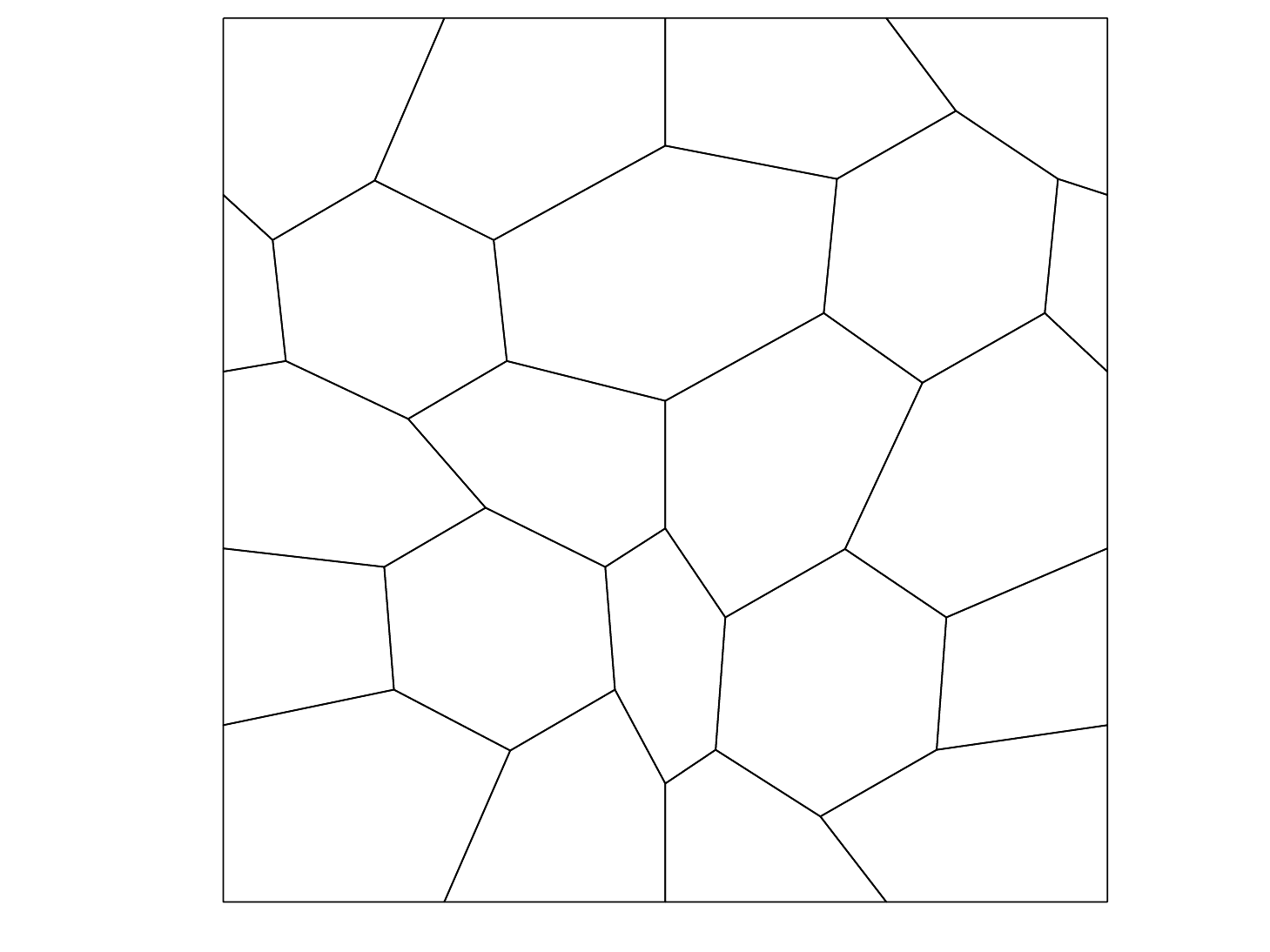} & 
    \hspace{-0.42cm}\includegraphics[scale=0.2]{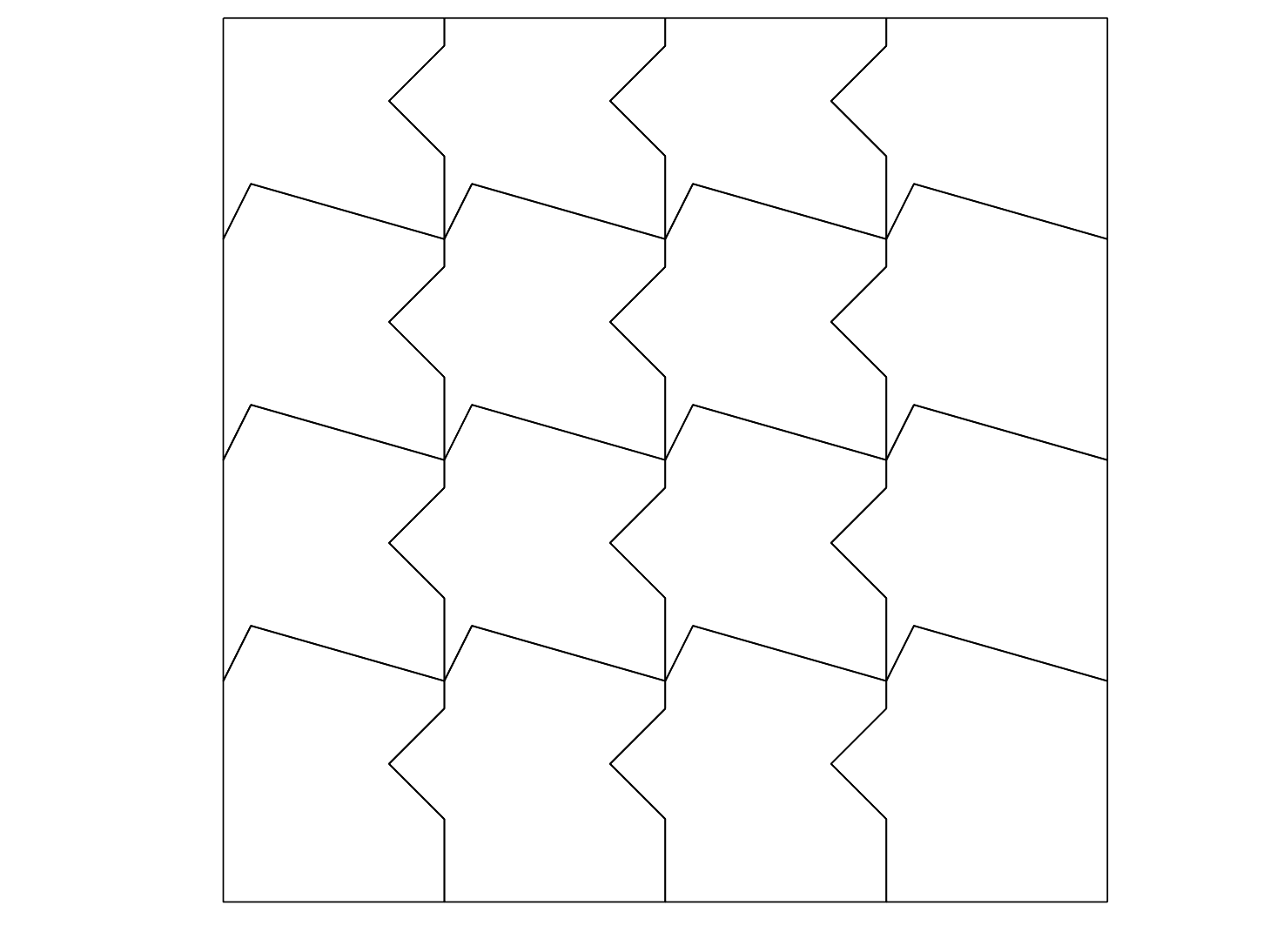} & 
    \hspace{-0.42cm}\includegraphics[scale=0.2]{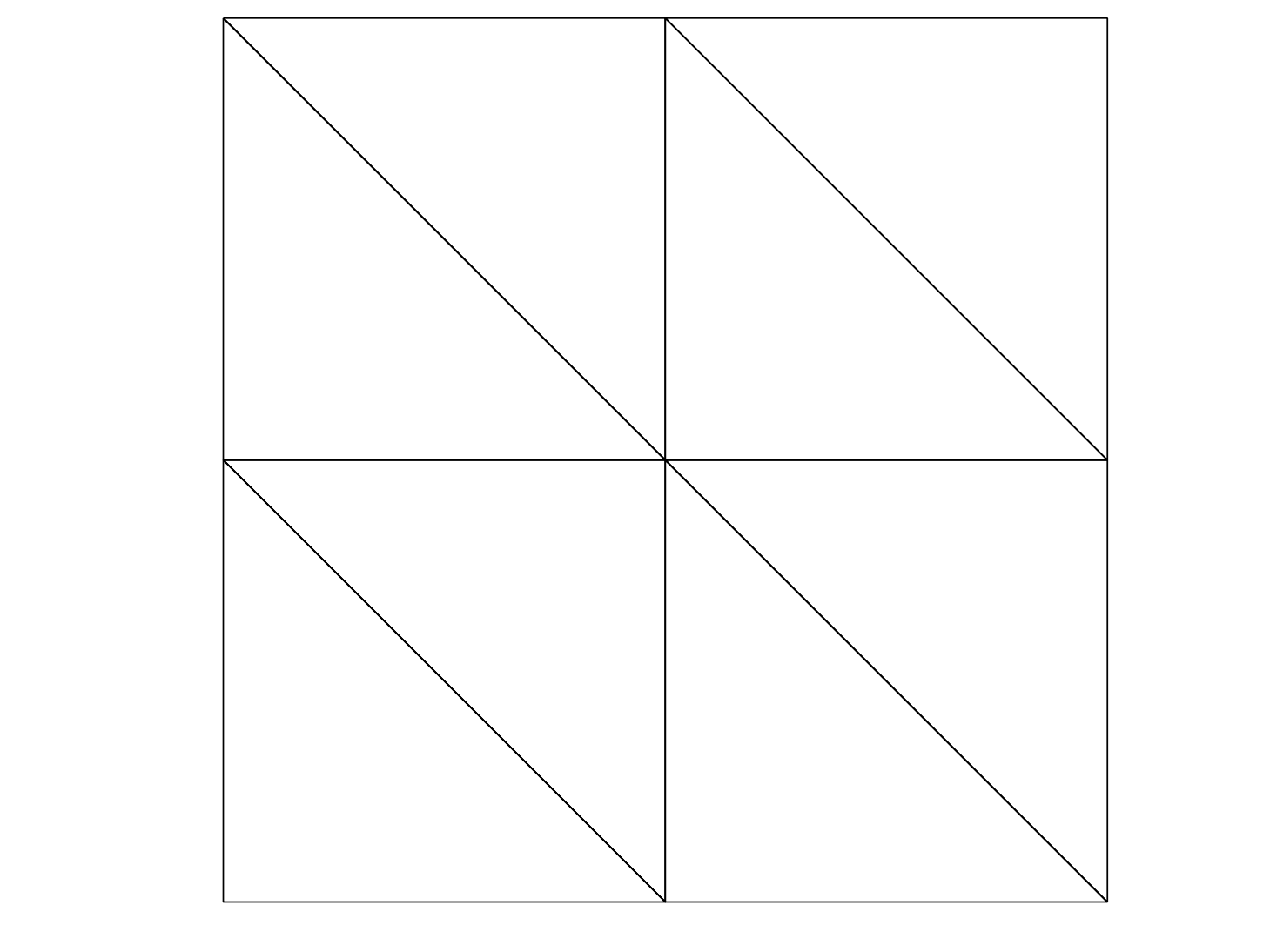} & 
    \hspace{-0.42cm}\includegraphics[scale=0.2]{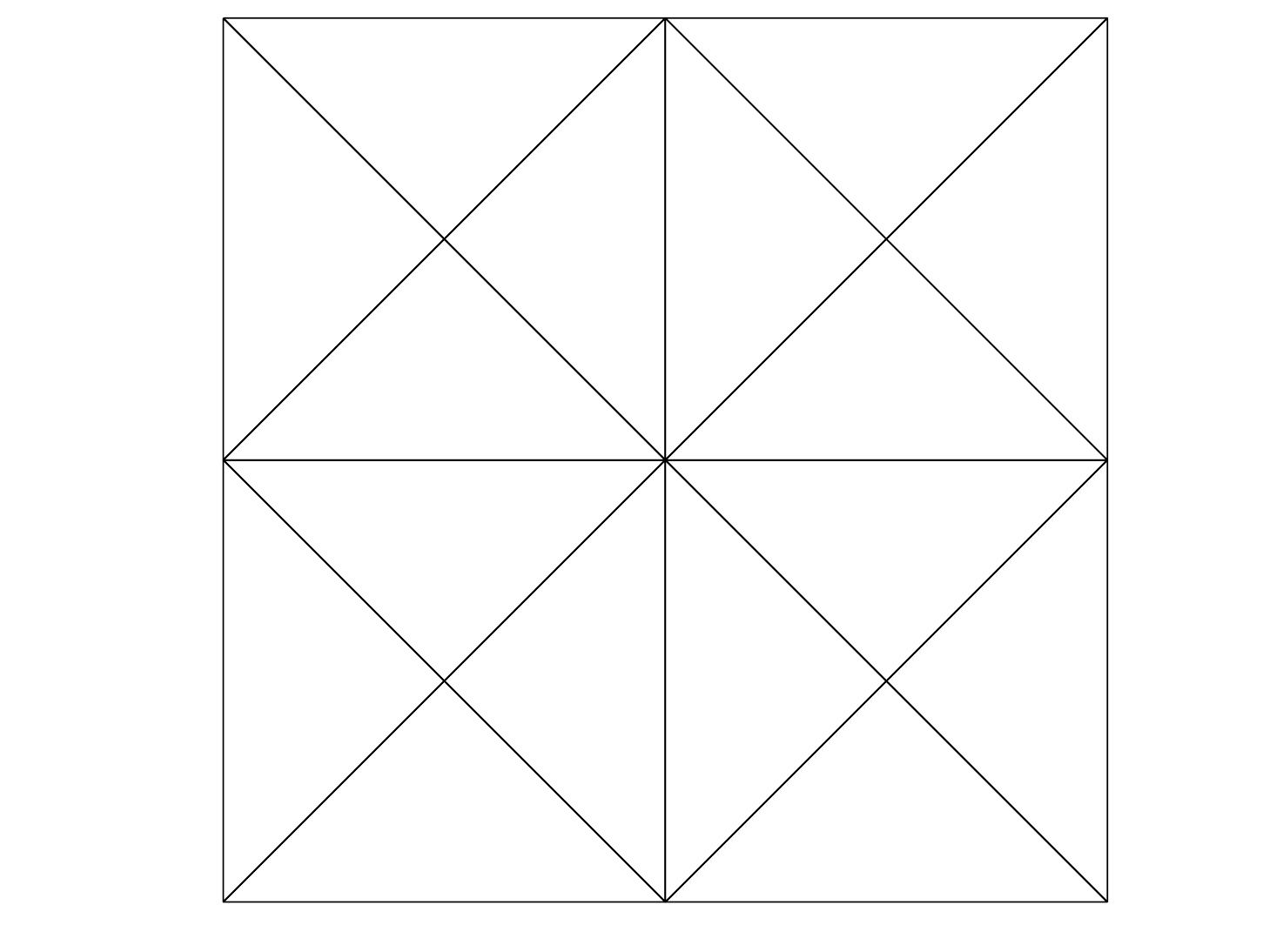} & 
    \hspace{-0.42cm}\includegraphics[scale=0.2]{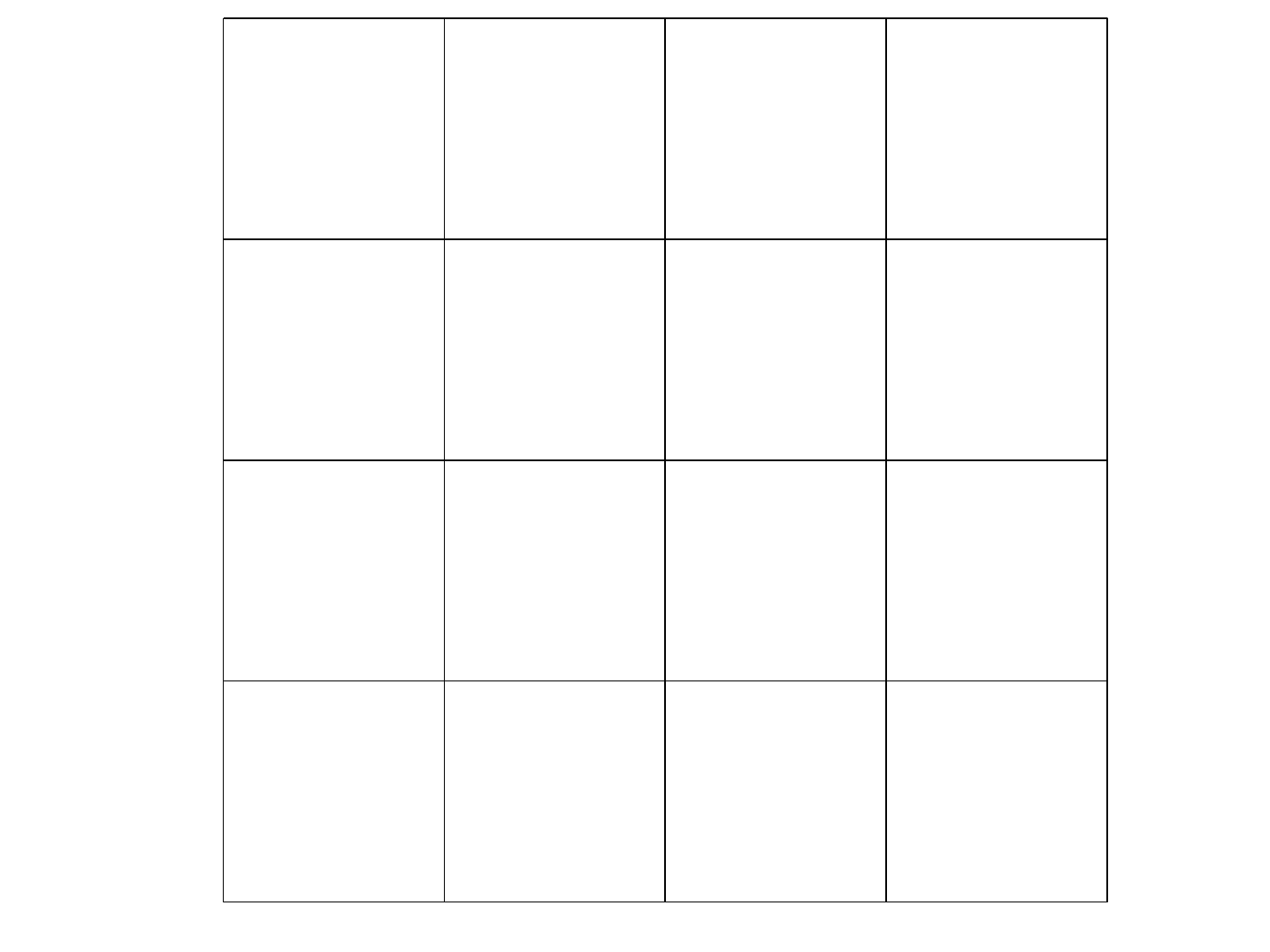}   
    \\[1em]
    \hspace{-0.42cm}\includegraphics[scale=0.2]{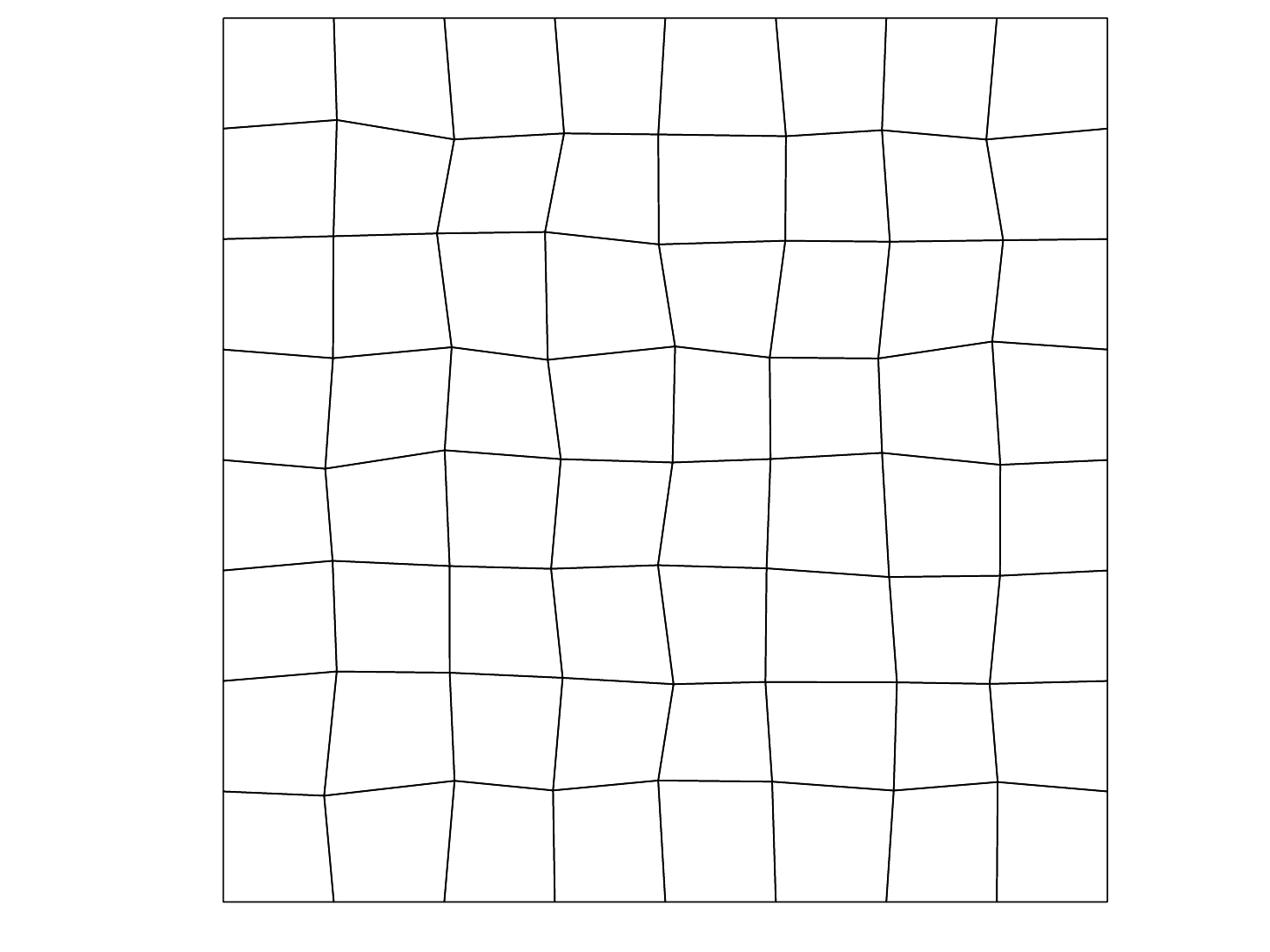} & 
    \hspace{-0.42cm}\includegraphics[scale=0.2]{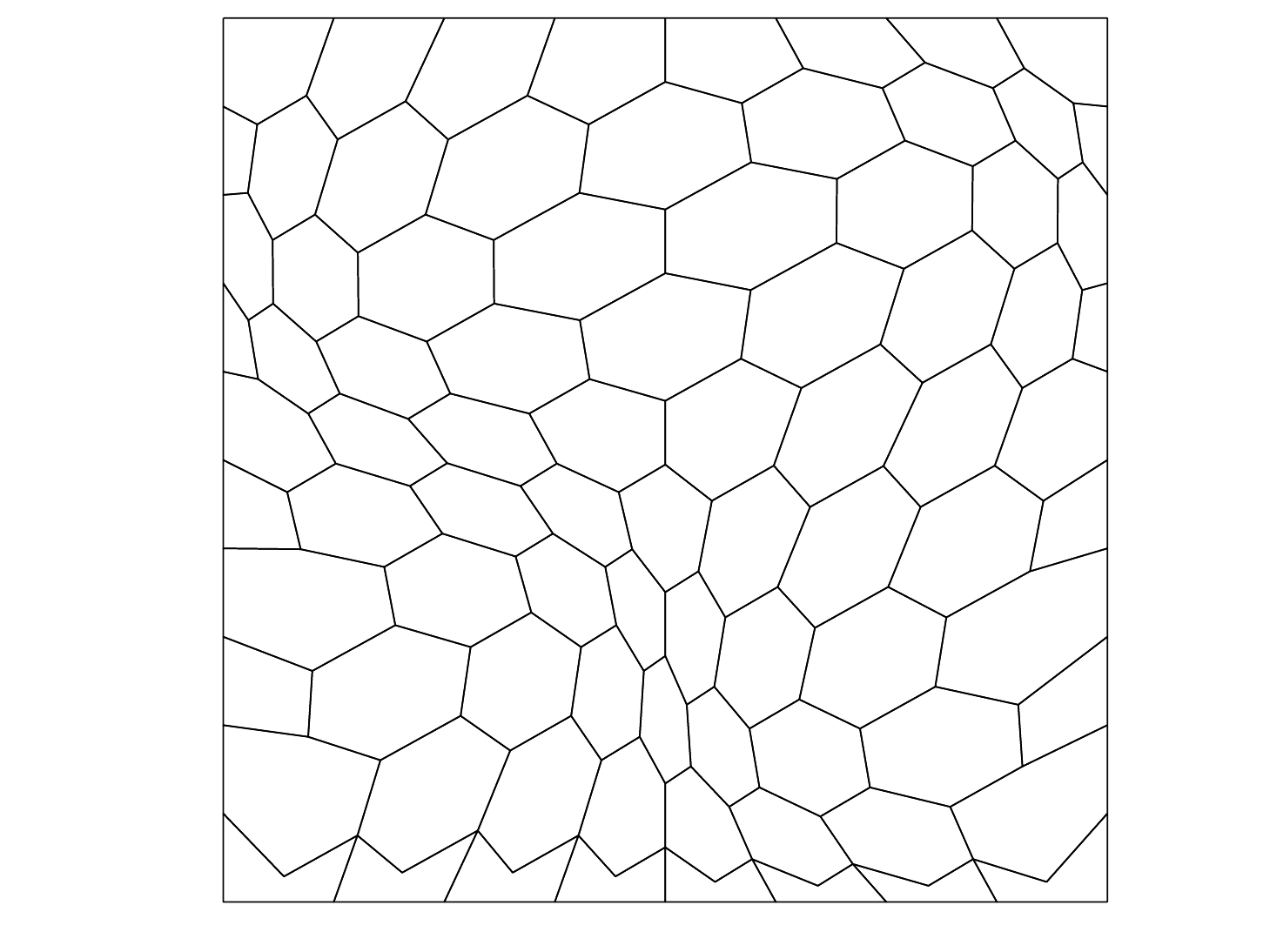} & 
    \hspace{-0.42cm}\includegraphics[scale=0.2]{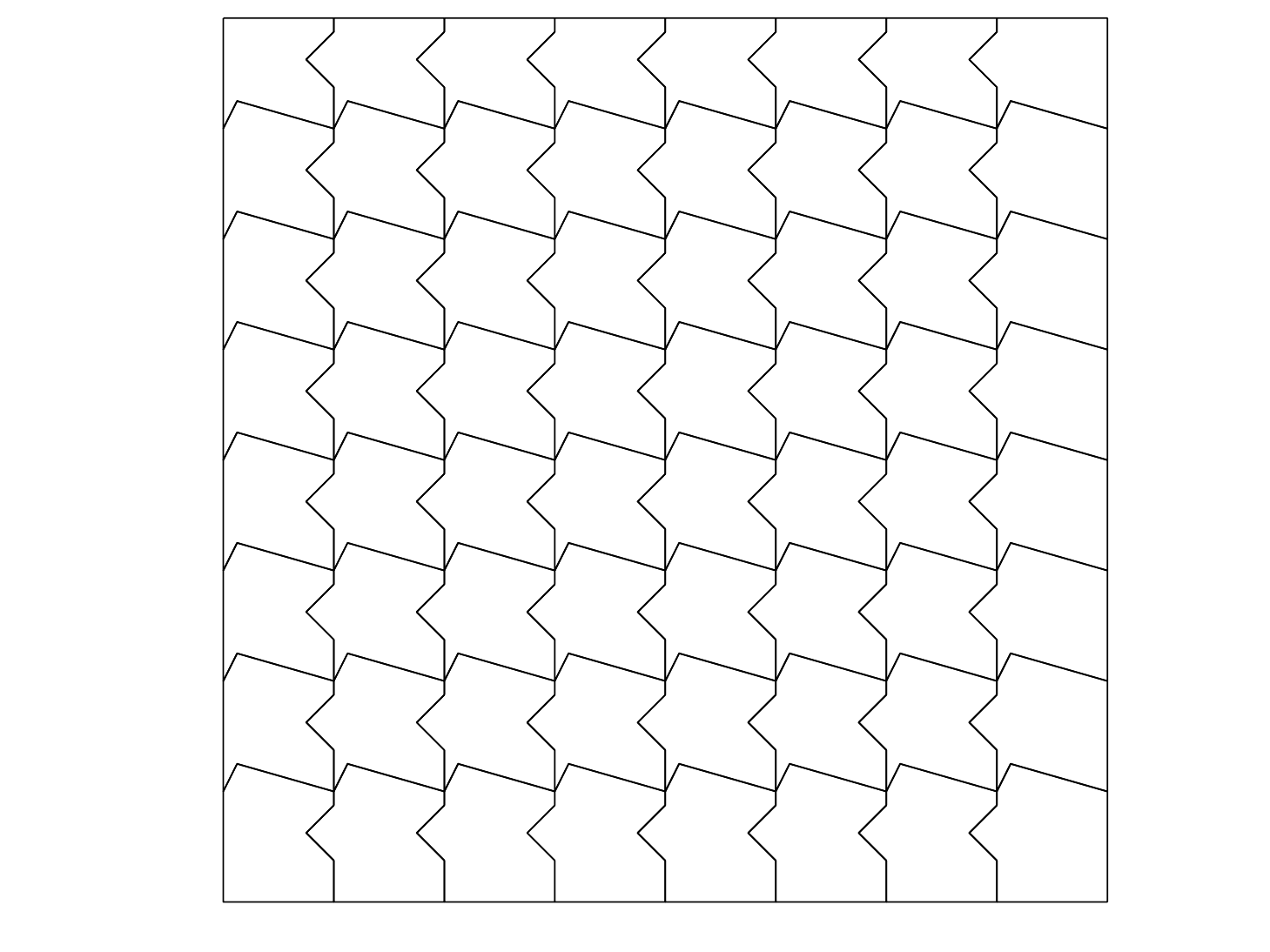} & 
    \hspace{-0.42cm}\includegraphics[scale=0.2]{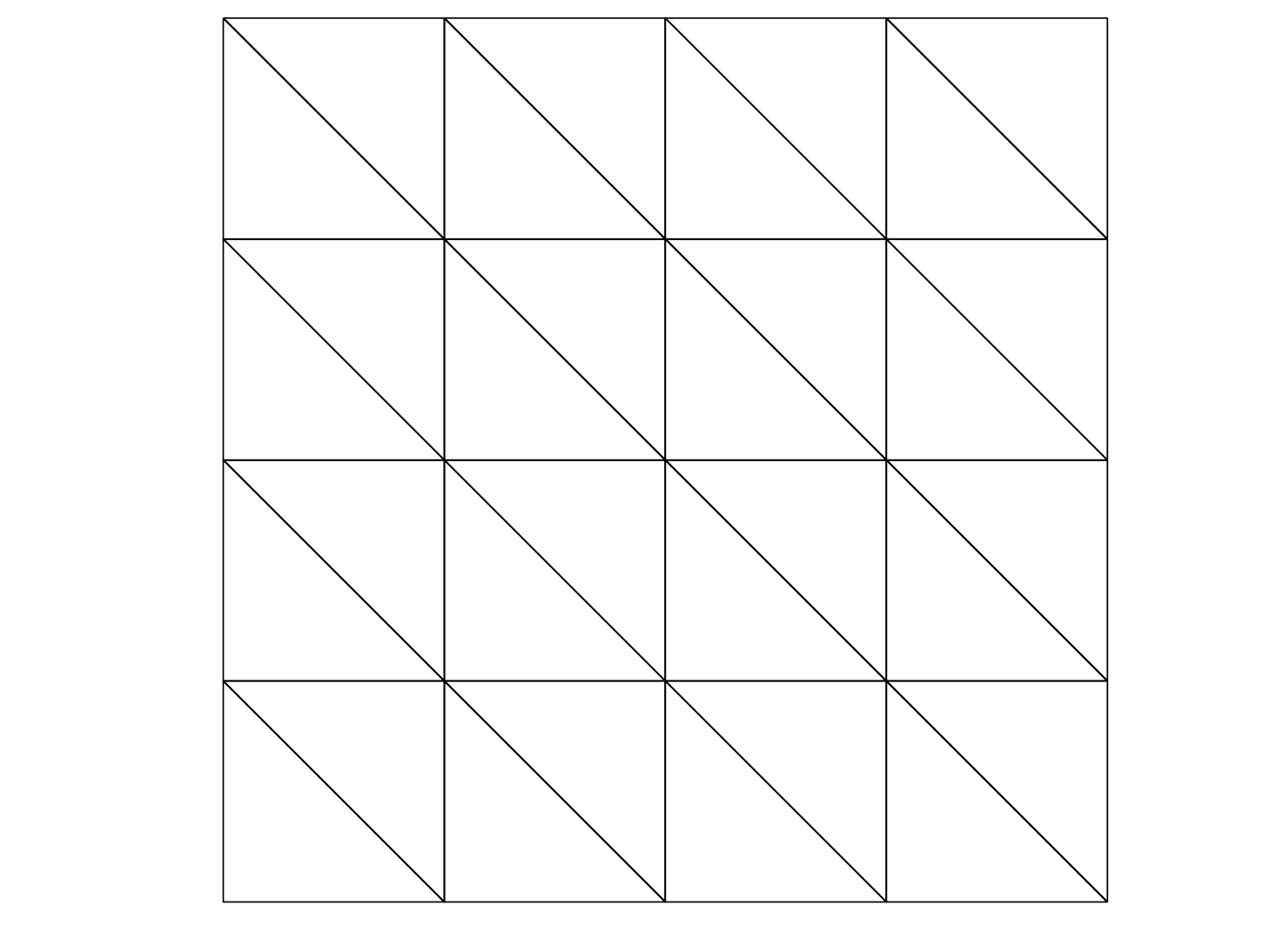} & 
    \hspace{-0.42cm}\includegraphics[scale=0.2]{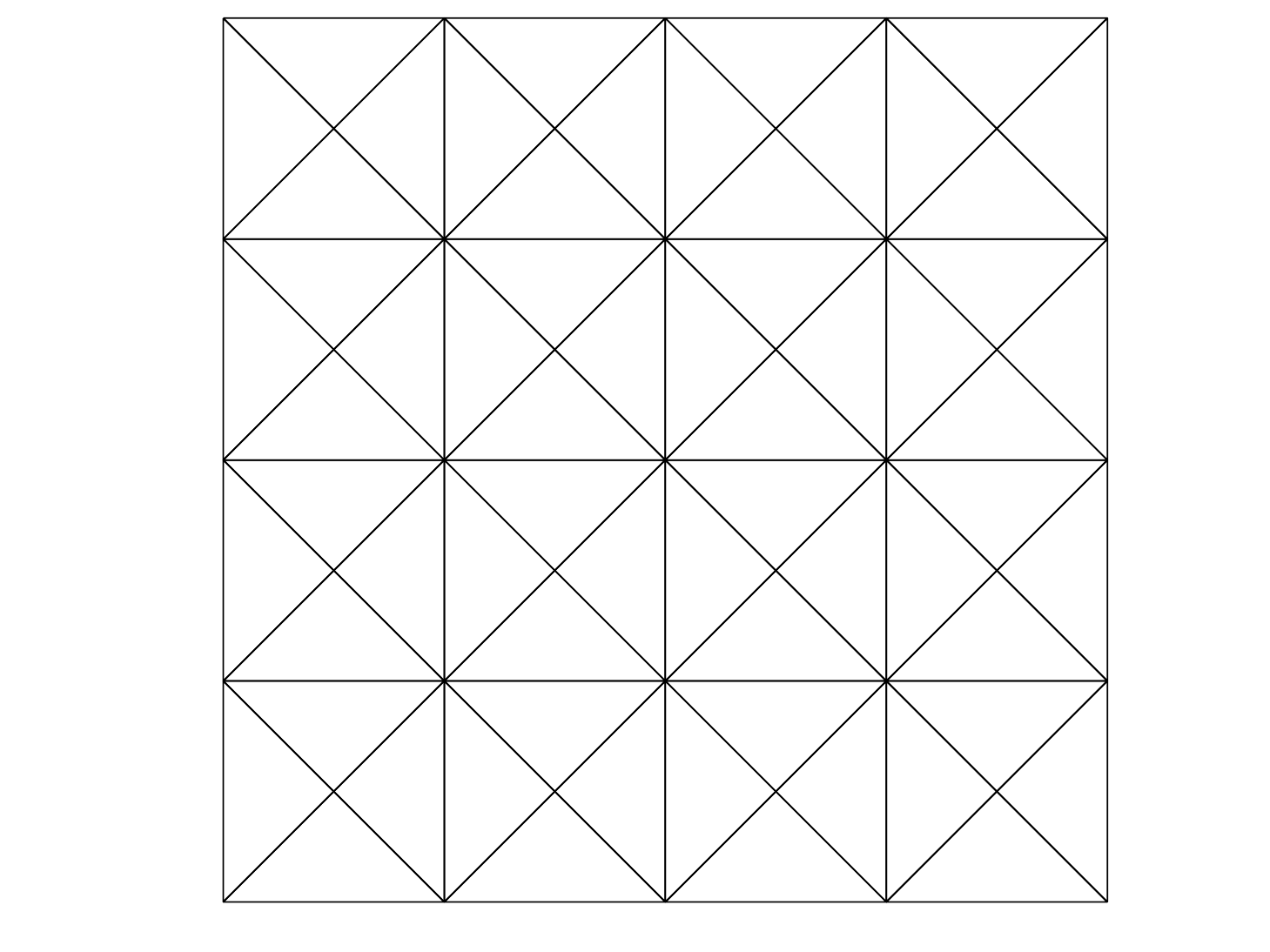} & 
    \hspace{-0.42cm}\includegraphics[scale=0.2]{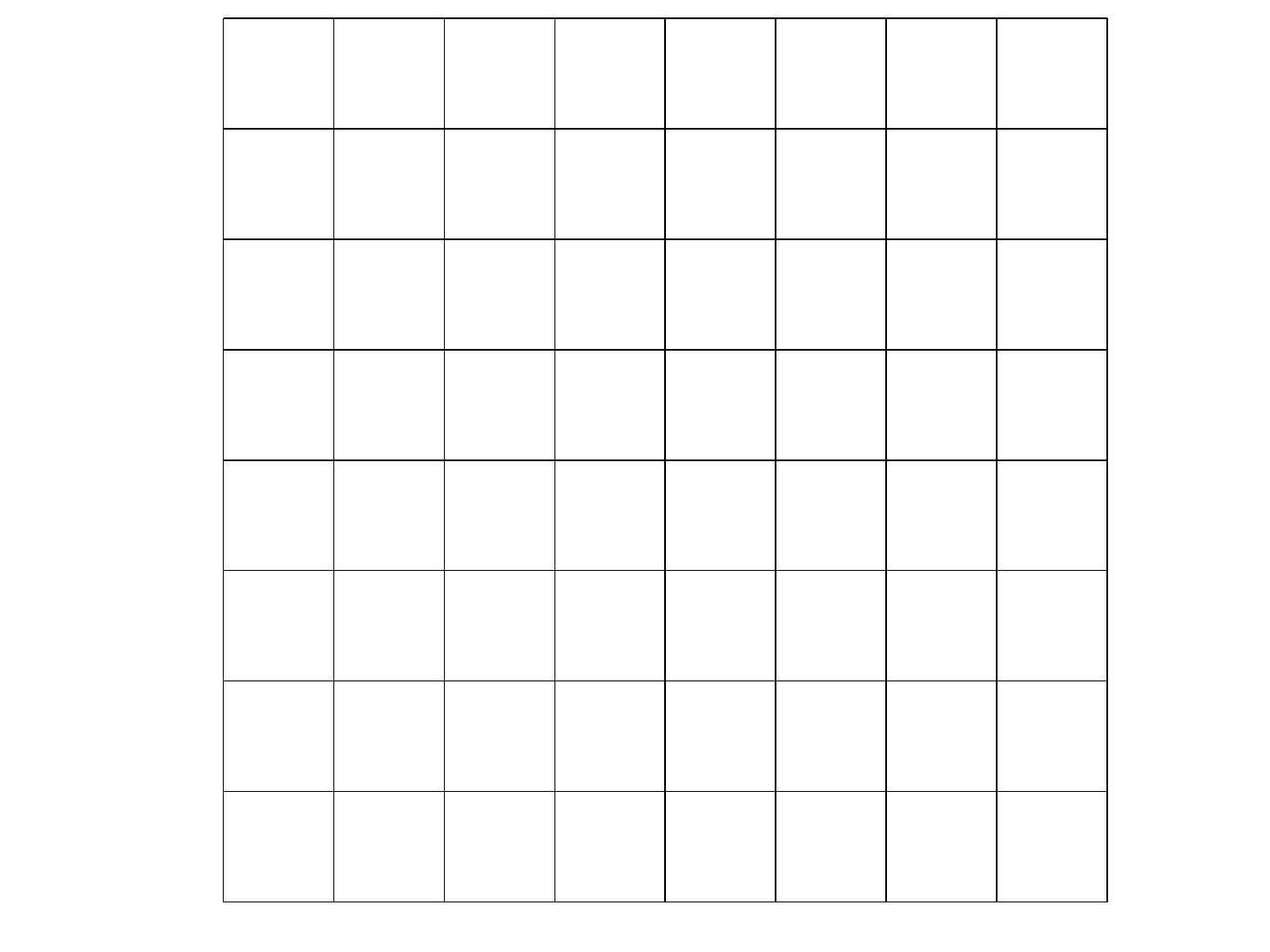}   
    \\[1em]    
    \hspace{-2mm}\text{(a)} & \hspace{-2mm}\text{(b)} & \hspace{-2mm}\text{(c)} &
    \hspace{-2mm}\text{(d)} & \hspace{-2mm}\text{(e)} & \hspace{-2mm}\text{(f)}
  \end{tabular}
  \caption{Base meshes (top row) and first refinement meshes (bottom
    row) of the six mesh families used in this section:
    $(a)$ randomly quadrilateral meshes;
    $(b)$ general polygonal meshes; 
    $(c)$ concave element meshes;
    $(d)$ diagonal triangle meshes;
    $(e)$ criss-cross triangle meshes;
    $(f)$ square meshes.
    }
  \label{fig:Meshes}
\end{figure}
We assess the convergence property of the  virtual element
formulation considered in this paper by numerically solving problem
\eqref{eq:stokes:var:A}-\eqref{eq:stokes:var:B} on the computational
domain $\Omega=[0,1]\times[0,1]$ partitioned by the six mesh families
of Figure~\ref{fig:Meshes}.
Dirichlet boundary conditions and source terms are set accordingly to
the manufactured solution $\uv=(\us_x,\us_y)^T$ and $\ps$ given by
\begin{align*}
  \us_x(x,y) &=  \cos{(2\pi\xs)}\sin{(2\pi\ys)},\\
  \us_y(x,y) &= -\sin{(2\pi\xs)}\cos{(2\pi\ys)},\\
  \ps  (x,y) &= e^{\xs+\ys}-(e-1)^2.
\end{align*}
On any set of refined meshes, we measure the $\HONE$ relative error
for the velocity vector field by applying the formula
\begin{align}
  \text{error}_{\HONE(\Omega)}(\uv) = \frac{\snorm{\uv-\Piz{k}\uvh}{1,\hh}}{\snorm{\uv}{1,\Omega}}
  \approx \dfrac{\snorm{\uv-\uvh}{1,\Omega}}{\snorm{\uv}{1,\Omega}},
  \label{eq:error:H1:velocity}
\end{align}
and the $\LTWO$ relative error by applying the formula
\begin{align}
  \text{error}_{\LTWO(\Omega)}(\uv)
  = \dfrac{\norm{ \uv-\Piz{k}\uvh}{0,\Omega}}{\norm{\uv}{0,\Omega}}
  \approx \dfrac{ \norm{\uv-\uvh}{0,\Omega} }{\norm{\uv}{0,\Omega}}.
  \label{eq:error:L2:velocity}
\end{align}
For the pressure scalar field we measure the $\LTWO(\Omega)$ relative
error by applying the formula
\begin{align}
  \text{error}_{\LTWO(\Omega)}(\ps)
  = \dfrac{\norm{\ps-\psh}{0,\Omega}}{\norm{\ps}{0,\Omega}}.
  \label{eq:error:L2:pressure}
\end{align}

Discretization of the bilinear form $a_h(\cdot,\cdot)$ and
$b_h(\cdot,\cdot)$ yields two matrices $\matA$ and $\matB$,
respectively, while discretization of the right-hand side gives the
vector $\mathbf{f}$.

To write the formal expression of such matrices we introduce the
canonical shape functions $\Phi_j$ of the virtual element space
$\Vvhk(\P)$, and the basis functions $\mathbf{m}_{\alpha}$ of the
local polynomial space $\big[\PS{\ell}(\P)\big]^2$, wheer $\ell$ can
be equal to $k-1$ or $k$.
We can assume that space $\PS{\ell}(\P)$ is the span of the finite set
of \emph{scaled monomials of degree up to $\ell$}, that are given by
\begin{align*}
  \calM_{\ell}(\omega) =
  \bigg\{\,
  \left( \frac{\xv-\xv_{\omega}}{\hh_{\omega}} \right)^{\alpha}
  \textrm{~with~}\abs{\alpha}\leq\ell
  \,\bigg\},
\end{align*}
where 
\begin{itemize}
\item $\xv_{\P}$ denotes the center of gravity of $\P$ and $\hP$ its
  characteristic length, e.g., the cell diameter;
\item $\alpha=(\alpha_1,\alpha_2)$ is the two-dimensional multi-index
  of nonnegative integers $\alpha_i$ with degree
  $\abs{\alpha}=\alpha_1+\alpha_{2}\leq\ell$ and such that
  $\xv^{\alpha}=\xs_1^{\alpha_1}\xs_{2}^{\alpha_{2}}$ for any
  $\xv\in\REAL^{2}$ and
  $\partial^{\abs{\alpha}}\slash{\partial\xv^{\alpha}}=\partial^{\abs{\alpha}}\slash{\partial\xs_1^{\alpha_1}\partial\xs_2^{\alpha_2}}$.
\end{itemize}
Alternatively, we can assume that space $\PS{\ell}(\P)$ is the span of
a set of orthogonalized polynomials built from the scaled monomials by
applying the Gram-Schmidt process in all elements $\P$.

\medskip
Noe, the entries of the global matrix $\matA$ are given by assembling
the local consistency and stability matrices,
\begin{align*}
  \matA= \sum_{\P} \matQ^T\big(\matA_{\P}^C+\matA_{\P}^S \big)\matQ
\end{align*}
where $\matQ$ is the assembling matrix that remaps the local entries
of the elemental matrices $\matA_{\P}^C$ and $\matA_{\P}^S$ into the
global setting of matrix $\matA$, and  
\begin{align*}
  (\matA_{\P}^C)_{i j} &= \int_{\P}\PizP{k-1}\nabla\Phi_i:\PizP{k-1}\nabla\Phi_j\dV,\\[0.5em]
  \matA_{\P}^S        &=(I-\mathbf{\Pi}^{\nabla})^T(I- \mathbf{\Pi}^{\nabla})
\end{align*}
and $\mathbf{\Pi}^{\nabla}$ is the matrix associated to the elliptic
projector.
Likewise, the global matrix $\matB$ has entries given by assembling
the local contributions of the integrals
\begin{align*}
  (\matB_{\P})_{\alpha j } =\int_{\P}
  \mathbf{m}_{\alpha} \PizP{\underline{k}} \DIV \Phi_j \dV.
\end{align*}

\medskip
Denoting with $\mathbf{u}$ and $\mathbf{p}$ the vectors of velocity
and pressure degrees of freedom, respectively, the problem to be
solved reads as:
\begin{equation} \begin{bmatrix} \matA & \matB^T \\
    \matB &  0 
  \end{bmatrix}
  \begin{bmatrix} \mathbf{u} \\ \mathbf{p} \end{bmatrix}=
 \begin{bmatrix} \mathbf{f} \\ 0 \end{bmatrix}
 \label{eq:saddle}
\end{equation}
To solve this system, we eliminate the degrees of freedom
corresponding to the Dirichlet conditions, and impose the additional
condition that $\int_\Omega\ps\dV=0$.

\subsection{General convergence results}

We compare the approximation errors \eqref{eq:error:H1:velocity},
\eqref{eq:error:L2:velocity}, and \eqref{eq:error:L2:pressure}
obtained by applying the numerical scheme to the mesh families $(a)$,
$(b)$, and $(c)$ and setting $\underline{k}=k-1$.
Figure~\ref{fig:h_errorPi0k} shows the approximation errors when the
right-hand side is discretized by using the projection operator
$\Piz{\kb}$ with $\kb=k$.
Figure~\ref{fig:h_errorPikmeno2} shows the approximation errors when
the right-hand side is approximated by using the projection operator
$\Piz{\kb}$ with $\kb=max(0,k-2)$.
When we set $\kb=k$ we observe the optimal convergence rates in all
the norms, so the error for the velocity in the energy norm and in the
$\LTWO$-norm scales as $\mathcal{O}(\hh^k)$ and
$\mathcal{O}(\hh^{k+1})$, and the error of the pressure scales as
$\mathcal{O}(\hh^{k})$.
When we set $\kb=\max(0,k-2)$, we observe the optimal convergence rate
for the velocity approximation for all $k\geq1$ only in the energy
norm.
Instead, the velocity error in the $\LTWO$-norm looses an order of
approximation when $k=2$.
Moreover, an overall better approximation, i.e., smaller errors, are
visible when we select $\kb=k$, which corresponds to the enhanced
virtual element space~\eqref{eq:BF:enhanced-space:def}
Regarding the approximation of the zero-divergence constraint, we note
that the divergence of the velocity is close to the machine precision
for all meshes and $k$ here considered.

\begin{figure}
  \includegraphics[width=\textwidth,clip=]{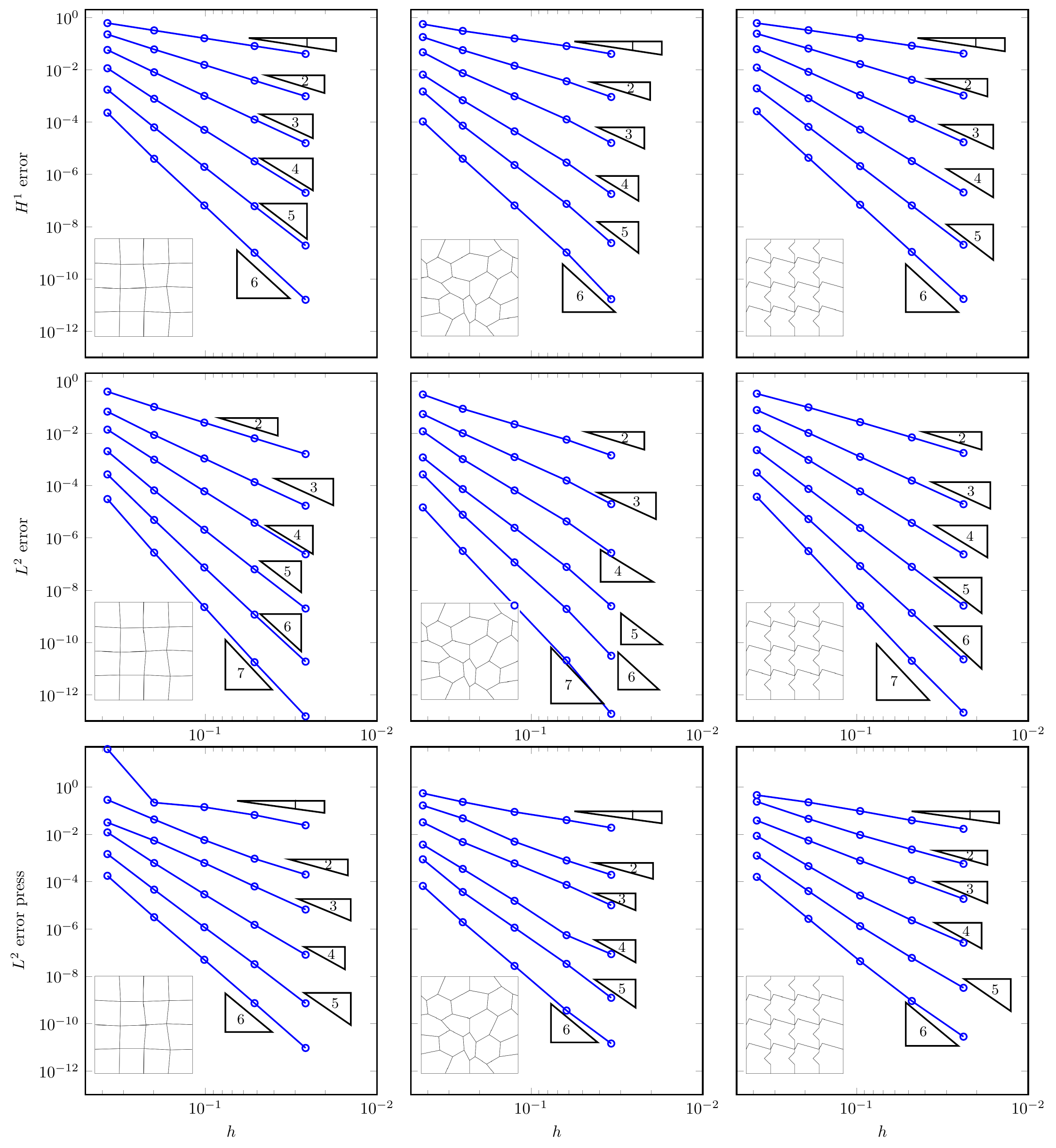} 
  \caption{Convergence curves versus the mesh size parameter $\hh$ for
    the velocity approximation measured using the energy
    norm~\eqref{eq:error:H1:velocity} (top panels) and the
    $\LTWO$-norm~\eqref{eq:error:L2:velocity} (mid panels), and for
    the pressure approximation measured using the
    $\LTWO$-norm~\eqref{eq:error:L2:pressure} (bottom panels).
    Blue lines with circles represent the error curves for the Basic
    Formulation using the enhanced virtual element
    space~\eqref{eq:BF:enhanced-space:def}.
    The right-hand side is approximated by using the projection
    operator $\Piz{k}$.
    The mesh families used in each calculations are shown in the left
    corner of each panel.
    The expected convergence slopes and rates are shown by the
    triangles and corresponding numeric labels.}
  \label{fig:h_errorPi0k}
\end{figure}

\begin{figure}
  \includegraphics[width=\textwidth,clip=]{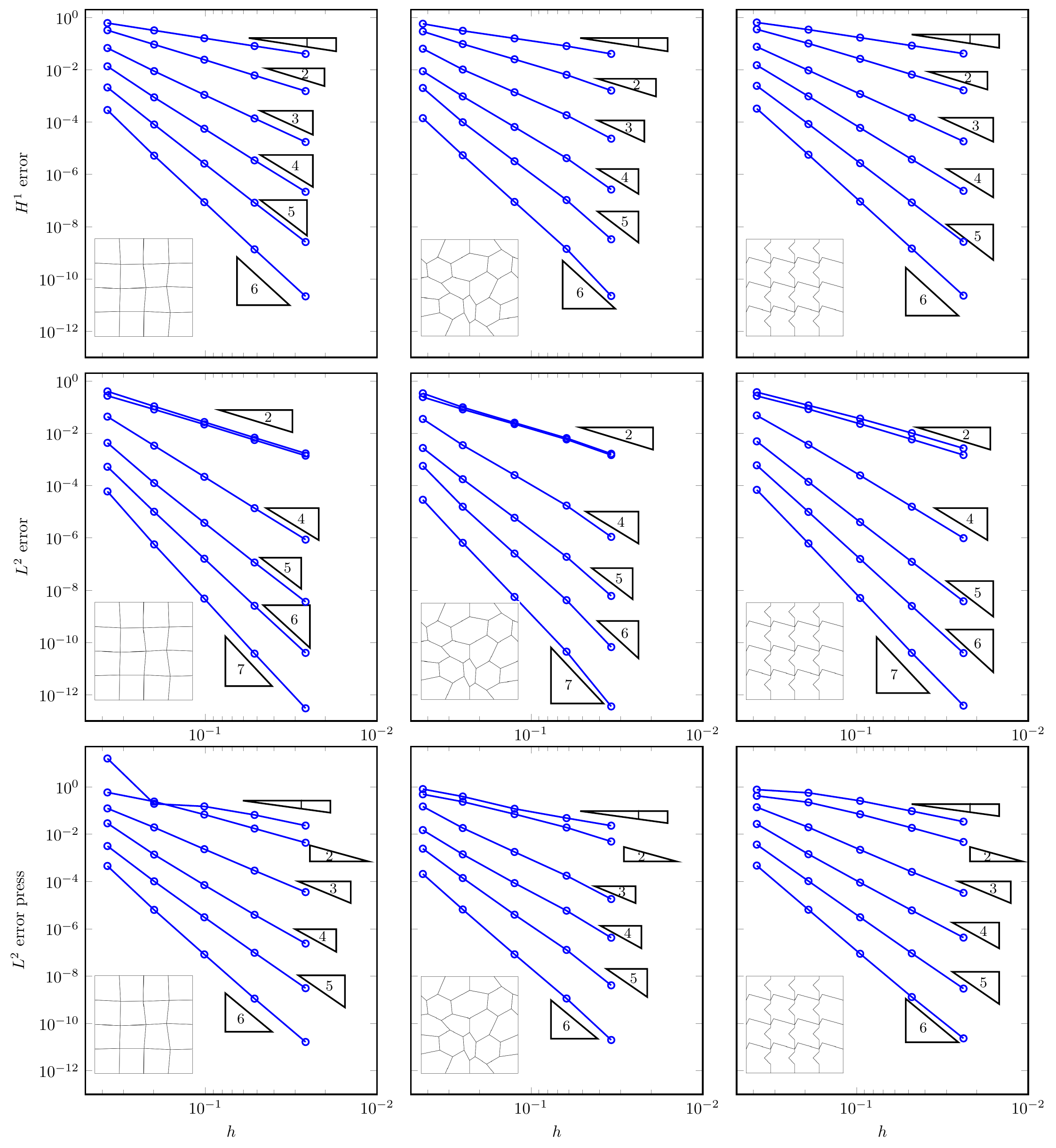} 
  \caption{Convergence curves versus the mesh size parameter $\hh$ for
    the velocity approximation measured using the energy
    norm~\eqref{eq:error:H1:velocity} (top panels) and the
    $\LTWO$-norm~\eqref{eq:error:L2:velocity} (mid panels), and for
    the pressure approximation measured using the
    $\LTWO$-norm~\eqref{eq:error:L2:pressure} (bottom panels).
    Blue lines with circles represent the error curves for the Basic
    Formulation using the virtual element
    space~\eqref{eq:BF:regular-space:def}.
    The right-hand side is approximated by using the projection
    operator $\Piz{k-2}$.
    The mesh families used in each calculations are shown in the left
    corner of each panel.
    The expected convergence slopes and rates are shown by the
    triangles and corresponding numeric labels.}
  \label{fig:h_errorPikmeno2}
\end{figure}

\subsection{The lowest-order case for $k=1$}

The case $k=1$ is critical since on triangular and square meshes this
scheme coincides with the $P^1\!-\!P^0$ Scott-Vogelius method, which
is indeed pathological.
We experimentally investigate this issue by using the mesh families
$(d)$, $(e)$, and $(f)$.
Following~\cite{Qin:1994-Thesis} and using orthonormal polynomials, a
relationship between the inf-sup constant $\beta$ and the minimum
non-zero eigenvalue of the matrix $ \matB\matA^{-1}\matB^T$ is given
by
\begin{align*}
  \beta = \sqrt{\lambda_{min}\big(\matB\matA^{-1}\matB^T\big)},
\end{align*}
where $\lambda_{min}$ is the smallest non-zero eigenvalue, and
matrices $\matB$ and $\matA$ are defined as in~\eqref{eq:saddle}.

\medskip
Note that we consider matrix $\matB$ after removing the rows and
columns that correspond to the boundary degrees of freedom.
If $l$ is the size of the kernel of matrix $\matB$, the rank of
$\matB$ is equal to $m=n_{\kb}-l$, and matrix $\matB\matA^{-1}\matB^T$
has $l$ eigenvalues equal to zero.
In Figure~\ref{fig:beta}, we show the values of the inf-sup constant
$\beta$ for $k=1$ versus the mesh size parameter $h$ of the diagonal,
criss-cross and square meshes.
It is evident from these plots that $\beta$ approaches zero, revealing
that the inf-sup condition is not satisfied.

Moreover, the numerical approximations obtained with $k \ge 2$ and
$\underline{k}=k-1$ give results as expected from the theory, as we
can see from Figure~\ref{fig:h_errorPi0ktria}.

\subsection{The case for $k=2$ and $\underline{k}=0$}
If we use the formulation with $k=2$ for the velocity virtual element
space and $\underline{k}=0$ for the pressure, we observe that the
velocity errors in the energy norm are very similar if the $\Piz{k}$
or $\Piz{k-2}$ projector is applied for the right-hand side and show a
rate of convergence of order $2$.
On the contrary, in $L^2$-norm, we note convergence of order between
$2$ and $3$ when the projector $\Piz{k}$ is applied, and convergence
of of order $2$ when the projector $\Piz{k-2}$ is applied.

For the approximation of the scalar unknown, the rate is approaching
$1$ in both cases and we observe better results with the $\Piz{k}$
projector.
The results for the case $k=2$, $\underline{k}=0$ are shown in
Figure~\ref{fig:h_errork2kk0Pi0kcompare}.

\begin{figure}
  \includegraphics[width=\textwidth,clip=]{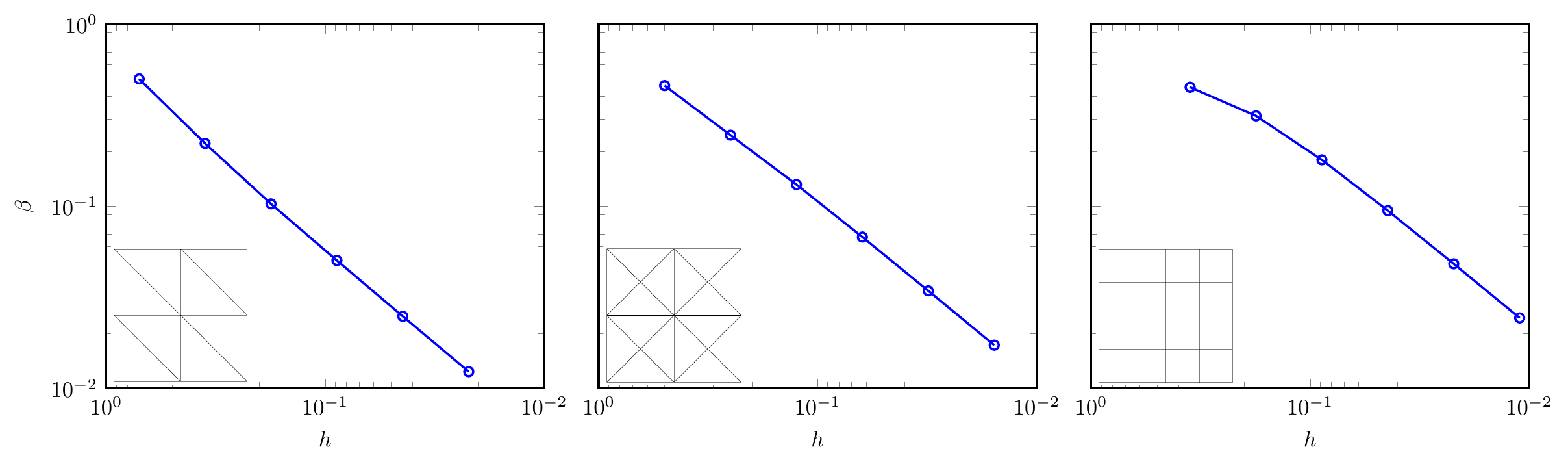} 
  \caption{Values of the inf-sup constant $\beta$ versus the mesh size parameter $\hh$.
    Blue lines with circles represent the error curves for the Basic
    Formulation using the virtual element
    space~\eqref{eq:BF:regular-space:def}.
    The mesh families used in each calculations are shown in the
    bottom-left corner of each panel.  }
  \label{fig:beta}
\end{figure}

\begin{figure}
  \includegraphics[width=\textwidth,clip=]{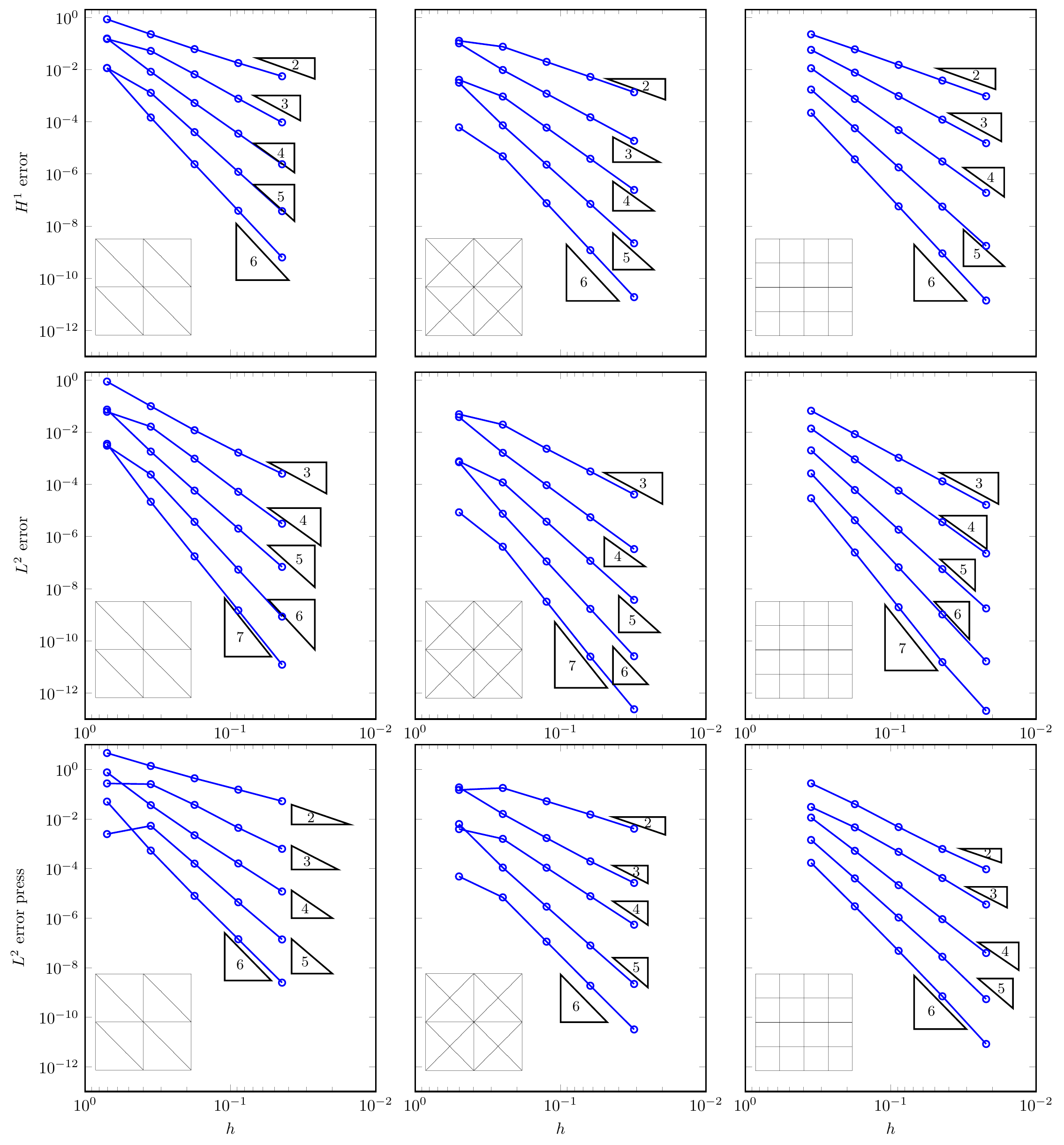} 
  \caption{Convergence curves versus the mesh size parameter $\hh$ for
    the velocity approximation measured using the energy
    norm~\eqref{eq:error:H1:velocity} (top panels) and the
    $\LTWO$-norm~\eqref{eq:error:L2:velocity} (mid panels), and for
    the pressure approximation measured using the
    $\LTWO$-norm~\eqref{eq:error:L2:pressure} (bottom panels).
    Blue lines with circles represent the error curves for the Basic
    Formulation using the enhanced virtual element
    space~\eqref{eq:BF:enhanced-space:def}.
    The right-hand side is approximated by using the projection
    operator $\Piz{k}$.
    The mesh families used in each calculations are shown in the left
    corner of each panel.
    The results with $k=1$ are not reported because there is no
    convergence.  The expected convergence slopes and rates are shown
    by the triangles and corresponding numeric labels.}
  \label{fig:h_errorPi0ktria}
\end{figure}

\begin{figure}
  \includegraphics[width=\textwidth,clip=]{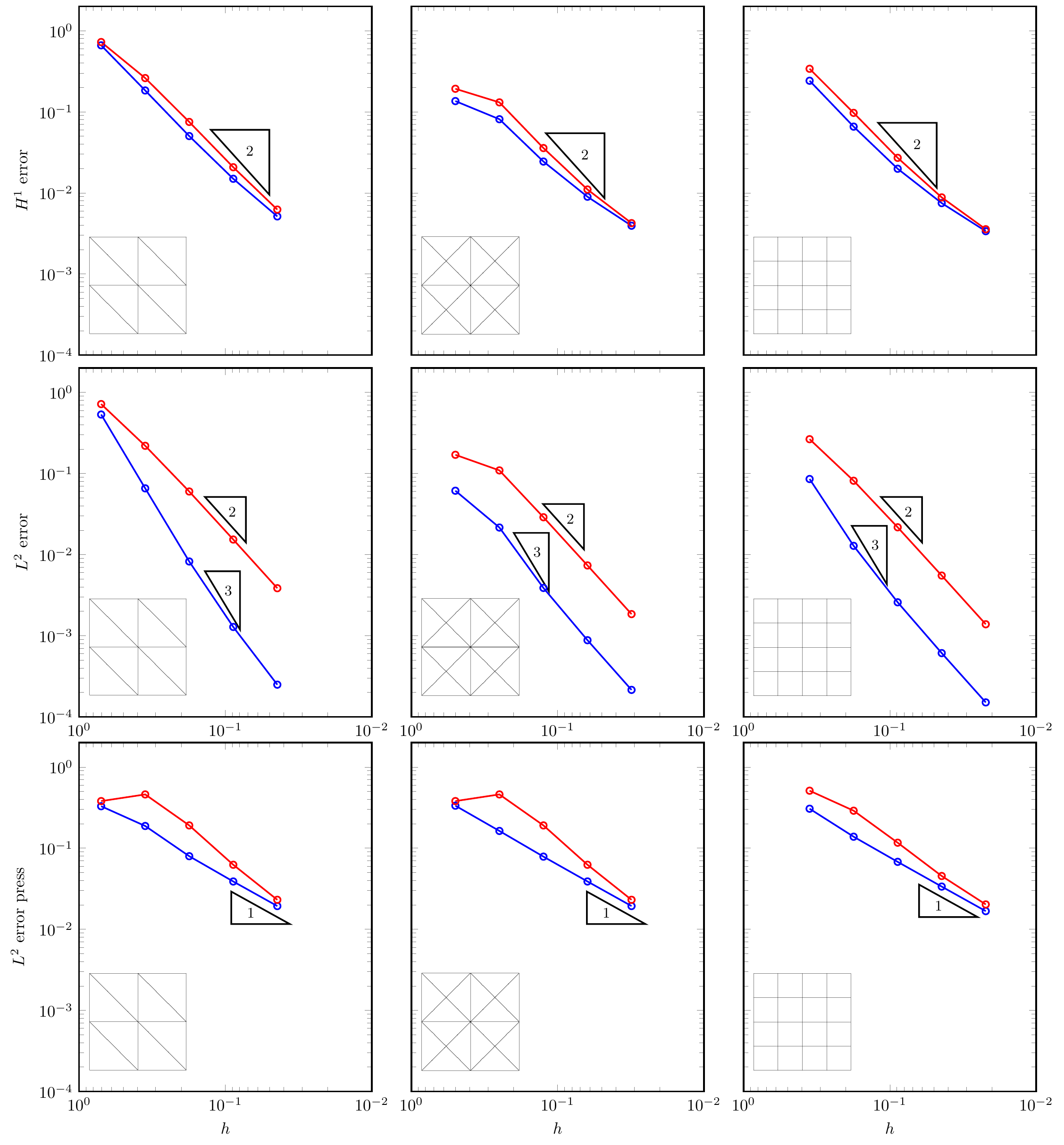} 
  \caption{Convergence curves versus the mesh size parameter $\hh$ for
    the velocity approximation measured using the energy
    norm~\eqref{eq:error:H1:velocity} (top panels) and the
    $\LTWO$-norm~\eqref{eq:error:L2:velocity} (mid panels), and for
    the pressure approximation measured using the
    $\LTWO$-norm~\eqref{eq:error:L2:pressure} (bottom panels).
    Blue lines with circles represent the error curves for the
    formulation using the enhanced virtual element
    space~\eqref{eq:BF:enhanced-space:def} with the right-hand side
    approximated by using the projection operator $\Piz{k}$.
    Red lines with circles represent the error curve with the
    right-hand side approximated by using the projection operator
    $\Piz{k-2}$.
    The mesh families used in each calculations are shown in the left
    corner of each panel.
    The convergence slopes and rates are shown by the triangles and
    corresponding numeric labels.}
  \label{fig:h_errork2kk0Pi0kcompare}
\end{figure} 

\section{Conclusions}
\label{sec:conclusions}

We studied a conforming virtual element formulation that generalizes
the Scott-Vogelius FEM for the numerical approximation of the Stokes
problem to unstructured meshes and works at any order of accuracy.
The components of the vector-valued unknown are approximated by using
the conforming regular or enhanced virtual element approximation space
that were originally introduced for the discretization of the Poisson
equation.
The scalar unknown is approximated by using discontinuous polynomials.
The stiffness bilinear form is approximated by using the orthogonal
polynomial projection of the gradients onto vector polynomials of
degree $k-1$ and adding a suitable stabilization term.
The zero divergence constraint is taken into account by projecting the
divergence equation onto the space of polynomials of degree $k-1$.
We presented a number of numerical experiments to demonstrate that
this formulation is inf-sup stable and convergent with optimal
convergence rates except for the lowest-order case (e.g., for the
polynomial order $k=1$) on triangular meshes, which corresponds to the
well-known $\PS{1}-\PS{0}$ unstable case of the Scott-Vogelius method,
and squares meshes.
Moreover, our numerical experiments show that the divergence
constraint is satisfied at the machine precision level by the
orthogonal polynomial projection of the divergence of the approximate
velocity vector.


\section*{Acknowledgments}
Dr. G. Manzini was supported by the LDRD-ER program of Los Alamos
National Laboratory under project number 20180428ER.
Los Alamos National Laboratory is operated by Triad National Security,
LLC, for the National Nuclear Security Administration of
U.S. Department of Energy (Contract No. 89233218CNA000001).
  




\clearpage

\newtheorem{applemma}[section]{Lemma}

\appendix

\renewcommand{\theequation}{\thesection.\arabic{equation}}
\setcounter{equation}{0}

\section{Approximation and orthogonality results for the virtual element method}

In this appendix, we report three technical lemmas that we use in the
convergence analysis of Section~\ref{sec4:convergence}.
The first two lemmas state basic results for the approximation of a
vector-valued field by its virtual element interpolant and a scalar
function by its polynomial projection onto the subspace of
polynomials, and are presented without a proof.
The interpolants $\vvI$ and $\qsI$ are an approximation of $\vv$ and
$\qs$, and the interpolation error can be bounded as stated in the
following lemma.
The third lemma presents an identity that is used in the proof of
Theorem~\ref{theorem:H1:abstract}.

\medskip
\begin{lemma}[Projection error~\cite{BeiraodaVeiga-Brezzi-Cangiani-Manzini-Marini-Russo:2013,Brenner-Scott:1994}]
  \label{lemma:projection:error}
  Under Assumptions~\textbf{(M1)}-\textbf{(M2)}, for every
  vector-valued field $\vv\in\big[\HS{s+1}(\P)\big]^2$ and scalar
  function $\qs\in\HS{s}(\P)$ with $1\leq\ss\leq\ell$, there exists a
  vector polynomial $\vv_{\pi}\in\big[\PS{\ell}(\P)\big]^2$ and a
  scalar polynomial $\qs_{\pi}\in\PS{\ell}(\P)$ such that
  \begin{align}
    &\norm{\vv-\vv_{\pi}}{0,\P} + \hP\snorm{\vv-\vv_{\pi}}{1,\P}\leq\Cs\hP^{s+1}\snorm{\vv}{s+1,\P},\\[0.75em]
    &\norm{\qs-\qs_{\pi}}{0,\P} + \hP\snorm{\qs-\qs_{\pi}}{1,\P}\leq\Cs\hP^{s} \snorm{\qs}{s,\P}
  \end{align}
  for some positive constant $\Cs$ that is independent of $\hP$ but
  may depend on the polynomial degree $\ell$ and the mesh regularity
  constant $\varrho$.
\end{lemma}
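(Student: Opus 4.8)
The plan is to obtain both inequalities from the classical Bramble--Hilbert/Deny--Lions machinery, realized through the averaged Taylor polynomial as developed in~\cite[Chapter~4]{Brenner-Scott:1994}. The geometric hypothesis that makes this work is Assumption~\textbf{(M1)}: every element $\P$ is star-shaped with respect to a disk $B\subseteq\P$ of radius $r\geq\varrho\hP$, which is precisely the condition under which the averaged Taylor polynomial over $B$ is well defined, reproduces polynomials, and enjoys approximation bounds whose constants are controlled purely by the chunkiness ratio $\hP/r\leq\varrho^{-1}$. (Assumption~\textbf{(M2)} is not needed for this purely interior estimate.)

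For the scalar bound I would set $\qs_{\pi}=Q^{s-1}\qs$, the averaged Taylor polynomial of degree $s-1$ associated with $B$; since $s\leq\ell$ we have $\qs_{\pi}\in\PS{s-1}(\P)\subseteq\PS{\ell}(\P)$. Because $Q^{s-1}$ reproduces every polynomial of degree at most $s-1$, the Bramble--Hilbert lemma (e.g.~\cite[Lemma~4.3.8]{Brenner-Scott:1994}) gives the scaled seminorm estimate
\begin{align*}
\snorm{\qs-\qs_{\pi}}{m,\P}\leq\Cs\,\hP^{\,s-m}\,\snorm{\qs}{s,\P},\qquad m=0,1,
\end{align*}
valid since $s\geq1$. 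Adding the cases $m=0$ and $m=1$ (the latter multiplied by $\hP$) yields the stated scalar inequality.

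For the vector bound I would proceed componentwise. Writing $\vv=(\vs_x,\vs_y)^T\in\big[\HS{s+1}(\P)\big]^2$, I set $\vv_{\pi}=\big(Q^{s}\vs_x,Q^{s}\vs_y\big)^T\in\big[\PS{s}(\P)\big]^2\subseteq\big[\PS{\ell}(\P)\big]^2$, now with the averaged Taylor polynomial of degree $s$. The Bramble--Hilbert estimate applied to each component gives
\begin{align*}
\snorm{\vs_x-Q^{s}\vs_x}{m,\P}\leq\Cs\,\hP^{\,s+1-m}\,\snorm{\vs_x}{s+1,\P},\qquad m=0,1,
\end{align*}
and identically for $\vs_y$; combining the two components in the Euclidean norm and summing the $m=0$ and $m=1$ contributions produces the first inequality of the lemma.

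The only genuinely delicate point is the uniformity of $\Cs$ in $\hP$, and this is where the mesh regularity is essential. The constant in the raw Bramble--Hilbert lemma on a fixed domain depends on the geometry only through its chunkiness parameter, and a standard dilation argument---rescaling $\P$ to a configuration of unit diameter, invoking the fixed estimate there, and scaling back while tracking the powers of $\hP$---transfers this into the stated $\hP$-scaling. Assumption~\textbf{(M1)} bounds the chunkiness by $\varrho^{-1}$ uniformly over the whole family $\mathcal{T}$, so $\Cs$ ends up depending only on $\ell$ and $\varrho$ and not on $\hP$. I expect this scaling bookkeeping, rather than any conceptual difficulty, to be the main thing requiring care.
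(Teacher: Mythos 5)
Your proof is correct and follows exactly the route the paper intends: the paper states this lemma \emph{without proof}, citing~\cite{BeiraodaVeiga-Brezzi-Cangiani-Manzini-Marini-Russo:2013,Brenner-Scott:1994}, and your argument---averaged Taylor polynomials over the disk guaranteed by \textbf{(M1)}, the Bramble--Hilbert lemma of~\cite[Chapter~4]{Brenner-Scott:1994} applied componentwise at degrees $s$ and $s-1$, with the constant controlled uniformly through the chunkiness bound $\hP\slash r\leq\varrho^{-1}$---is precisely the standard proof contained in the cited source. Your side observations are also accurate: \textbf{(M2)} is indeed not needed for this interior estimate, and the $\hP$-scaling is already packaged in the Brenner--Scott statement, so the dilation bookkeeping you flag poses no difficulty.
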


\medskip
\begin{lemma}[Interpolation error~\cite{BeiraodaVeiga-Brezzi-Cangiani-Manzini-Marini-Russo:2013,Brenner-Scott:1994}]
  \label{lemma:interpolation:error}
  Under Assumptions~\textbf{(M1)}-\textbf{(M2)}, for every
  vector-valued field $\vv\in\big[\HS{s+1}(\P)\big]^2$ and scalar
  function $\qs\in\HS{s}(\P)$ with $1\leq\ss\leq\ell$, there exists a
  vector valued-field $\vvI\in\Vvhk(\P)$ and a scalar field
  $\qs\in\Qsh_{\ell}$ such that
  \begin{align}
    \norm{\vv-\vvI}{0,\P} + \hP\snorm{\vv-\vvI}{1,\P}\leq\Cs\hP^{s+1}\snorm{\vv}{s+1,\P},\\[0.5em]
    \norm{\qs-\qsI}{0,\P} + \hP\snorm{\qs-\qsI}{1,\P}\leq\Cs\hP^{s} \snorm{\qs}{s,\P}
  \end{align}
  for some positive constant $\Cs$ that is independent of $\hP$ but
  may depend on the polynomial degree $\ell$ and the mesh regularity
  constant $\varrho$.
\end{lemma}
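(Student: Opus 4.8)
The plan is to treat the two estimates separately, since the scalar bound is essentially a restatement of the polynomial approximation result already available. For the scalar function $\qs\in\HS{s}(\P)$ one takes as $\qsI$ the polynomial interpolant defined through the same moments as $\qs$ (equivalently, the local $\LTWO(\P)$-projection onto $\PS{\ell}(\P)$); its approximation error is then bounded directly by the second estimate of Lemma~\ref{lemma:projection:error}, the $\snorm{\cdot}{1,\P}$ contribution following from the $\HONE$-stability of the $\LTWO$-projection on shape-regular elements. Hence the substance of the proof lies entirely in the vector-valued virtual element interpolant.

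For the vector field I would work componentwise, defining $\vvI\in\Vvhk(\P)$ as the unique virtual element function sharing the degrees of freedom \TERM{D1}--\TERM{D3} with $\vv$; unisolvence guarantees this is well defined. The classical device is to interpose the polynomial approximant $\vv_{\pi}\in\big[\PS{\ell}(\P)\big]^2$ of Lemma~\ref{lemma:projection:error} and use the triangle inequality,
\[
  \snorm{\vv-\vvI}{1,\P}\le\snorm{\vv-\vv_{\pi}}{1,\P}+\snorm{\vv_{\pi}-\vvI}{1,\P},
\]
together with its $\LTWO$ counterpart. The first summand is controlled directly by Lemma~\ref{lemma:projection:error}. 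For the second, since $\ell\le k$ the interpolation operator reproduces $\vv_{\pi}$, i.e. $(\vv_{\pi})_{\INTP}=\vv_{\pi}$, whence $\vv_{\pi}-\vvI=(\vv_{\pi}-\vv)_{\INTP}$ is itself the interpolant of $\vv_{\pi}-\vv$.

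The core estimate is then the $\HONE$-stability of the interpolation operator, namely $\snorm{w_{\INTP}}{1,\P}\le\Cs\,\norm{w}{1,\P}$ (with the analogous $\LTWO$ bound) for $w=\vv-\vv_{\pi}$. I would establish this by a scaling argument: rescale $\P$ to a configuration of unit diameter, where Assumptions~\textbf{(M1)}--\textbf{(M2)} provide uniform shape-regularity, and there invoke the equivalence between the $\HONE$-seminorm of a virtual element function and the suitably scaled Euclidean norm of its degrees of freedom. The degrees of freedom of $w$ --- vertex values, edge moments against $\PS{k-2}(\E)$, and cell moments against $\PS{k-2}(\P)$ --- are bounded back in terms of $\norm{w}{1,\P}$ by the continuity of these functionals; a final application of Lemma~\ref{lemma:projection:error} to $w$, after tracking the powers of $\hP$ produced by the rescaling, delivers the right-hand side $\Cs\hP^{s+1}\snorm{\vv}{s+1,\P}$.

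The main obstacle is the treatment of the vertex (point-evaluation) degrees of freedom \TERM{D1}: in two dimensions point values are not continuous functionals on $\HONE(\P)$, so $w(\xV)$ cannot be bounded directly by $\snorm{w}{1,\P}$. I would handle this in the customary way, restricting to an incident edge $\E$ through the trace theorem and then using the one-dimensional embedding $\HONE(\E)\hookrightarrow\CS{0}(\E)$ to control $w(\xV)$ by $\norm{w}{1,\E}\le\Cs\,\norm{w}{1,\P}$, with condition~\textbf{(M2)} guaranteeing that the edge-scaling constants remain uniform across the mesh family. Collecting the bounds for all degrees of freedom then yields both estimates with a constant $\Cs$ depending only on the polynomial degree $\ell$ and the regularity parameter $\varrho$.
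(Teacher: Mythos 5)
The paper itself offers no proof of this lemma --- it is explicitly ``presented without a proof'' and attributed to~\cite{BeiraodaVeiga-Brezzi-Cangiani-Manzini-Marini-Russo:2013,Brenner-Scott:1994} --- so your proposal can only be judged against the standard arguments in that literature, whose overall architecture you reproduce correctly: scalar part via the $\LTWO$-projection plus Lemma~\ref{lemma:projection:error}, vector part via the triangle inequality through $\vv_{\pi}$, polynomial reproduction $(\vv_{\pi})_{\INTP}=\vv_{\pi}$, and a DOF-stability estimate on a scaled element. However, the core stability step contains a genuine error. The claimed bound $\snorm{w_{\INTP}}{1,\P}\le\Cs\,\norm{w}{1,\P}$ cannot hold in two dimensions, and your repair of the vertex degrees of freedom \TERM{D1} does not work: the trace operator maps $\HONE(\P)$ only into $\HS{1/2}(\partial\P)$, so the inequality $\norm{w}{1,\E}\le\Cs\,\norm{w}{1,\P}$ you invoke is false --- the tangential derivative of the trace is not controlled by $\nabla w\in\LTWO(\P)$. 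Indeed, if that inequality held, composing it with the embedding $\HONE(\E)\hookrightarrow\CS{0}(\E)$ would make $w\mapsto w(\xV)$ a bounded functional on $\HONE(\P)$, which is precisely the impossibility you yourself point out at the start of that paragraph. As a consequence, $w_{\INTP}$ is not even well defined for a general $w\in\big[\HONE(\P)\big]^2$, and the stability estimate must be formulated on a smaller space.

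The fix is to use the regularity actually available: since $s\geq1$, the function $w=\vv-\vv_{\pi}$ lies in $\big[\HS{s+1}(\P)\big]^2\subset\big[\HTWO(\P)\big]^2$, and in 2-D the scaled Sobolev embedding gives $\abs{w(\xV)}\le\norm{w}{\LINF(\P)}\le\Cs\big(\hP^{-1}\norm{w}{0,\P}+\snorm{w}{1,\P}+\hP\snorm{w}{2,\P}\big)$, with constants uniform under \textbf{(M1)}--\textbf{(M2)}. The extra second-order term is harmless for the final estimate, because the Bramble--Hilbert argument behind Lemma~\ref{lemma:projection:error} also yields $\snorm{\vv-\vv_{\pi}}{2,\P}\le\Cs\hP^{s-1}\snorm{\vv}{s+1,\P}$, so after rescaling every DOF contribution still produces $\Cs\hP^{s}\snorm{\vv}{s+1,\P}$ at the $\HONE$-seminorm level. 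Alternatives in the literature achieve the same end via the embedding $\HS{s+1}(\P)\subset\WS{1,p}{}(\P)$ with $p>2$, or by first applying a Cl\'ement-type quasi-interpolant so that point values are only ever taken of continuous piecewise polynomials. A secondary caution: the equivalence between $\snorm{\cdot}{1,\P}$ and the scaled Euclidean norm of the degrees of freedom, which you invoke on the reference configuration, is itself a nontrivial result for virtual element spaces (it is where much of the work in the cited references lies); citing it is acceptable, but it should be flagged as an ingredient rather than treated as elementary scaling.
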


\medskip
\begin{lemma}[Orthogonality between $\DIV(\uvh-\uvI)$ and $\psh-\psI$]
  Let $\uv\in\big[\HONEzr(\Omega)\big]^2$ be the exact solution of the
  variational formulation of the Stokes problem given
  in~\eqref{eq:stokes:var:A}-\eqref{eq:stokes:var:B}.
  Let $(\uvh,\psh)\in\Vvhk\times\Qshkk$ be the solution of the virtual
  element
  approximation~\eqref{eq:stokes:vem:A}-\eqref{eq:stokes:vem:B}.
  Then, it holds that
  \begin{align}
    \bs(\uvh-\uvI,\psh-\psI) &= 0 \qquad\forall\qs\in\Qsh.
    \label{eq:aux:20}
  \end{align}
\end{lemma}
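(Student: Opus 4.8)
The plan is to prove the two pieces $\bs(\uvh,\psh-\psI)$ and $\bs(\uvI,\psh-\psI)$ vanish separately, after first checking that $\psh-\psI$ is an admissible pressure test function. Since the local interpolant is built so that $\restrict{\psI}{\P}$ carries the same polynomial moments as $\restrict{\ps}{\P}$, summing the cell averages gives $\int_{\Omega}\psI\dV=\int_{\Omega}\ps\dV=0$; hence $\psI\in\Qshkk$ and consequently $\psh-\psI\in\Qshkk\subset\PS{k-1}(\Th)$. This membership is what licenses both the consistency relation~\eqref{eq:bsh-bs} and the discrete constraint~\eqref{eq:stokes:vem:B} at the end.

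The central step is the identity $\bs(\uv,\qs)=\bs(\uvI,\qs)$ for every $\qs\in\PS{k-1}(\Th)$. First I would argue that the composed operator $\PizP{k-1}\DIV(\cdot)$, applied to a virtual element field, is computable from its degrees of freedom alone: integrating by parts on each $\P$ rewrites $\int_{\P}(\DIV\vvh)\,\qs\dV$ as a volume term $-\int_{\P}\vvh\cdot\nabla\qs\dV$, whose integrand $\nabla\qs\in\big[\PS{k-2}(\P)\big]^2$ pairs against the cell moments \TERM{D3}, plus a boundary term $\int_{\partial\P}(\vvh\cdot\nor)\,\qs\dS$ fixed entirely by the edge traces carried by \TERM{D1}-\TERM{D2}. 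Because $\uv$ and its interpolant $\uvI$ share all degrees of freedom, this forces $\PizP{k-1}\DIV\uv=\PizP{k-1}\DIV\uvI$. Inserting this through the defining orthogonality of $\PizP{k-1}$ then yields, for each $\P$ and each $\qs\in\PS{k-1}(\P)$,
\begin{align*}
  \int_{\P}\qs\,\DIV\uv\dV
  = \int_{\P}\qs\,\PizP{k-1}\DIV\uv\dV
  = \int_{\P}\qs\,\PizP{k-1}\DIV\uvI\dV
  = \int_{\P}\qs\,\DIV\uvI\dV,
\end{align*}
and summing over $\Th$ delivers the claimed identity.

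To finish, I would evaluate at $\qs=\psh-\psI$. On one side, the identity above gives $\bs(\uvI,\psh-\psI)=\bs(\uv,\psh-\psI)$, and this vanishes by~\eqref{eq:stokes:var:B} since $\psh-\psI\in\Qshkk$ is zero-mean. On the other side, $\bs(\uvh,\psh-\psI)=\bsh(\uvh,\psh-\psI)$ by the consistency relation~\eqref{eq:bsh-bs}, and this vanishes by the discrete divergence constraint~\eqref{eq:stokes:vem:B}. Subtracting the two gives $\bs(\uvh-\uvI,\psh-\psI)=0$, which is~\eqref{eq:aux:20}. The step I expect to demand the most care is the computability claim for $\PizP{k-1}\DIV$, i.e.\ verifying that the boundary and volume contributions above are genuinely captured by \TERM{D1}-\TERM{D3} and therefore agree for $\uv$ and $\uvI$; once that is in hand, the remainder is just bookkeeping with the continuous and discrete incompressibility constraints.
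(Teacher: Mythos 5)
Your proposal is correct and follows essentially the same route as the paper's own proof: the crux in both is the identity $\PizP{k-1}\DIV\uv=\PizP{k-1}\DIV\uvI$, deduced from the fact that the composed projection depends only on the degrees of freedom, which $\uv$ and $\uvI$ share, followed by the same endgame combining~\eqref{eq:stokes:var:B}, \eqref{eq:bsh-bs} and~\eqref{eq:stokes:vem:B} and finally taking $\qs=\psh-\psI$. The two details you add --- the integration-by-parts justification of the degrees-of-freedom dependence (volume term paired with the cell moments \TERM{D3}, boundary term with the trace data \TERM{D1}--\TERM{D2}) and the zero-mean verification showing $\psh-\psI\in\Qshkk$ --- are precisely what the paper asserts without elaboration in the lemma and defers to the proof of Theorem~\ref{theorem:H1:abstract}, respectively, so they strengthen rather than alter the argument.
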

\begin{proof}
  First, note that the composed operator $\PizP{k-1}\DIV(\cdot)$ only
  depends on the degrees of freedom of its argument.
  These degrees of freedom are the same for $\uv$ and
  its virtual element interpolation $\uvI$, so that it must hold
  that $\PizP{k-1}\DIV\uv=\PizP{k-1}\DIV\uvI$.
  Using this property and the definition of the orthogonal projection
  $\PizP{k-1}$ yield
  \begin{align*}
    \bs(\uv,\qs)
    &= \sum_{\P}\int_{\P}\qs\,\DIV\uv\dV
    = \sum_{\P}\int_{\P}\qs\PizP{k-1}\DIV\uv\dV
    = \sum_{\P}\int_{\P}\qs\PizP{k-1}\DIV\uvI\dV
    \\[0.5em]
    &= \sum_{\P}\int_{\P}\qs\,\DIV\uvI\dV
    = \bs(\uvI,\qs)
  \end{align*}
  for every $\qs\in\PS{k-1}(\P)$.
  Eventually, we note that $0=\bs(\uv,\qs)=\bs(\uvI,\qs)$ from
  Eq.~\eqref{eq:stokes:var:B} and $\bs(\uvh,\qs)=\bsh(\uvh,\qs)$
  from Eq.~\eqref{eq:bsh-bs}.
  So, it holds that $\bs(\uvh-\uvI,\qs)=0$, and relation~\eqref{eq:aux:20}
  immediately follows by setting $\qs=\psh-\psI$.
\end{proof}

\end{document}